\title{Local Distance Antimagic Labeling of Neighborhood Balanced Colored Graphs}
\author{Maurice Genevieva Almeida \\
	p20230078@goa.bits-pilani.ac.in\\
	Birla Institute of Technology and Science Pilani,\\ K K Birla Goa Campus, Goa, India.}
\newtheorem{theorem}{Theorem}[section]
\newtheorem{proposition}[theorem]{Proposition}
\newtheorem{cor}[theorem]{Corollary}
\newtheorem{remark}[theorem]{Remark}
\begin{document}
\date{}
\maketitle
\begin{abstract}
	Let $G=(V,E)$ be a graph of order $n$ without isolated vertices. A bijection $f\colon V\rightarrow \{1,2,\dots,n\}$ is called a local distance antimagic labeling, if $w(u)\not=w(v)$ for every edge $uv$ of $G$, where $w(u)=\sum_{x\in N(u)}f(x)$. The local distance antimagic chromatic number $\chi_{ld}(G)$ is defined to be the minimum number of colors taken over all colorings of $G$ induced by local distance antimagic labelings of $G$. In this article, we study the local distance antimagic labeling of neighborhood balanced colored graphs.
\end{abstract}
\textbf{2020 Mathematics Subject Classification:} 05C 78 \\\\
\textbf{Keywords:} Local distance antimagic labeling, local distance antimagic chromatic number, lexicographic product, neighborhood balanced colored graphs.
	\section{Introduction}
	By a graph $G=(V, E)$, we mean a finite, simple, undirected graph having neither multiple edges nor loops. For graph theoretic notations, we refer to Chartrand and Lesniak \cite{Chart}.\\ 
	
	The notion of antimagic labeling was introduced by Hartsfield and Ringel \cite{hartsfield} in 1990. A graph $G$ is antimagic if the edges of $G$ can be labeled by the numbers $\{1,2,\dots,|E|\}$ such that the sums of the labels of the edges incident to each vertex (called the weight of a vertex) are all distinct. They conjectured that {\it every connected graph with at least three vertices admits an antimagic labeling}. They also made a weaker conjecture that {\it every tree with at least three vertices admits an antimagic labeling}. These two conjectures were partly shown to be correct by several authors, but they are still unsolved.\\
	
	Arumugam and Kamatchi \cite{Kamatchi} introduced a vertex version of antimagic labeling of a graph as follows:  a bijection $f: V \rightarrow \{1,2,\dots,n\}$ is said to be distance antimagic labeling of $G$ if all the vertices have distinct vertex weights, where the weight of a vertex is defined as $w(v)=\sum_{x\in N(v)}f(x)$, where $N(v)$ is the open neighborhood of the vertex $v$, which is defined as the set of vertices of the graph $G$ which are adjacent to $v$.  A graph $G$  is called a distance antimagic graph if it admits a distance antimagic labeling $f$. \\
	
	
	Arumugam et al.\cite{premalatha} introduced a local version of antimagic labeling: let $G=(V,E)$ be a graph. A bijection $f\colon E\rightarrow \{1,2,\dots, |E|\}$ is called local antimagic labeling if for any two adjacent vertices $u$ and $v$, $w(u)\not=w(v)$, where $w(u)=\sum_{e\in E(u)}f(e)$ and $E(u)$ is the set of edges incident to $u$. Thus any local antimagic labeling induces a proper vertex coloring of $G$ where the vertex $v$ is assigned the color $w(v)$. The local antimagic chromatic number $\chi_{la}(G)$ is the minimum number of colors taken over all colorings induced by local antimagic labeling of $G$.\\
	
	Arumugam et al.\cite{premalatha}, conjectured that {\it a connected graph with at least three vertices admit a local antimagic labeling}. Bensmail et al. \cite{premalatha} solved this conjecture partially. Finally, Haslegrave proved this conjecture using probabilistic tools \cite{haslegrave}. Recently, several authors investigated the local antimagic chromatic number for several families of graphs. For further study, ( see \cite{premalatha}, \cite{lauLAG2}, \cite{raviLAG}, \cite{lauLAG1}).\\
	
	Motivated by local antimagic labeling, Divya et al.\cite{yamini} and Handa et al.\cite{Handa} independently introduced the notion of local distance antimagic labeling as follows: let $G=(V, E)$ be a graph of order $n$ and let $f\colon V\rightarrow\{1,2,\dots,n\}$ be a bijection. For every vertex $v\in V$, define the weight of $v$ as $w(v)=\sum_{x\in N(u)}f(x)$. The labeling $f$ is said to be local distance antimagic labeling of $G$ if $w(u)\not=w(v)$ for every pair of adjacent vertices $u,v\in V$. A graph that admits such a labeling is called a local distance antimagic graph. A local distance antimagic labeling induces a proper vertex coloring of the graph, with the vertex $v$ assigned the color $w(v)$. The local distance antimagic chromatic number $\chi_{ld}(G)$ is the minimum number of colors taken over all colorings induced by local distance antimagic labelings of $G$. Clearly $\chi_{ld}(G) \geq \chi(G)$.\\\\ 
	Several authors have studied and found local distance antimagic chromatic numbers for different classes of graphs. For further study, (see   \cite{Nalliah1}, \cite{Nalliah2}, \cite{Nalliah3}, \cite{Nalliah4}, \cite{yamini}, \cite{Handa}). Handa et al.\cite{Handa} proved the following result, which is useful to get a lower bound for the local distance antimagic chromatic number of a graph.	
	\begin{proposition}\cite{Handa}\label{handaprop}
		Let $G$ be a local distance antimagic graph of order $n$. If $u$ and $v$ are vertices such that $|N(u)\triangle N(v)|=1\ or\ 2$, then $w(u)\not=w(v)$.
	\end{proposition}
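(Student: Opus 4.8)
The plan is to compute the difference $w(u)-w(v)$ directly and exploit the fact that $f$ takes distinct positive integer values. First I would observe that the common neighbors of $u$ and $v$ contribute equally to both weights and therefore cancel. Writing $A=N(u)\setminus N(v)$ and $B=N(v)\setminus N(u)$, which are disjoint sets whose union is $N(u)\triangle N(v)$, I would record the identity
\begin{equation*}
w(u)-w(v)=\sum_{x\in A}f(x)-\sum_{x\in B}f(x).
\end{equation*}
Thus it suffices to show that this quantity is nonzero whenever $|A|+|B|=|N(u)\triangle N(v)|\in\{1,2\}$.

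Next I would split into cases according to how the one or two vertices of the symmetric difference are distributed between $A$ and $B$. If one of $A,B$ is empty, then $w(u)-w(v)$ is, up to sign, a sum of one or two values $f(x)$, each a positive integer, and hence strictly positive in absolute value. This disposes of the case $|N(u)\triangle N(v)|=1$ (where necessarily one of $A,B$ is a singleton and the other is empty) together with the two ``unbalanced'' sub-cases of $|N(u)\triangle N(v)|=2$ (both vertices in $A$, or both in $B$).

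The only remaining situation is the ``balanced'' sub-case of $|N(u)\triangle N(v)|=2$, namely $A=\{a\}$ and $B=\{b\}$ with $a\neq b$. Here $w(u)-w(v)=f(a)-f(b)$, and since $f$ is a bijection onto $\{1,\dots,n\}$, distinct vertices receive distinct labels, so $f(a)\neq f(b)$ and the difference is again nonzero. This last sub-case is the only place where injectivity of $f$ (rather than mere positivity of its values) is needed, and it is the one point I would be careful about; the remainder is just bookkeeping of the cancellation.
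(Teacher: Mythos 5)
Your argument is correct and is exactly the standard proof of this fact: the paper only quotes this proposition from Handa et al.\ without reproducing its proof, but the argument there is precisely your cancellation of common neighbors followed by the case analysis on how the one or two symmetric-difference vertices split between $N(u)\setminus N(v)$ and $N(v)\setminus N(u)$, using positivity of the labels in the unbalanced cases and injectivity of $f$ in the balanced one. Nothing is missing.
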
 
	The following result by Priyadharshini et al.\cite{Nalliah1}, gives a lower bound for the local distance antimagic chromatic number of trees based on the number of support vertices in it.	
	\begin{theorem}\cite{Nalliah1}\label{nalliahleaf}
		Let $T$ be a tree on $n\geq 3 $ vertices with $k$ leaves. Let $L=\{N(l), \text{where $l$ is a leaf}\}$ with $|L|=t$. Then $\chi_{ld}(T)\geq t+1$.
	\end{theorem}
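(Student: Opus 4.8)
The plan is to separate the $t$ colours that the leaves are forced to use from one extra colour supplied by a high-label vertex. First I would record the elementary observation that for a leaf $l$ with unique neighbour $s$, the open neighbourhood is the singleton $N(l)=\{s\}$, so the $t$ distinct sets in $L$ correspond bijectively to the $t$ distinct support vertices $s_1,\dots,s_t$. Since $w(l)=\sum_{x\in N(l)}f(x)=f(s)$, each leaf inherits the label of its support vertex as its colour. Two leaves attached to distinct support vertices thus receive distinct colours (immediate from injectivity of $f$, or from Proposition~\ref{handaprop} since such leaves satisfy $|N(l_1)\triangle N(l_2)|=2$), while leaves sharing a support vertex share a colour, which is harmless as they are pairwise non-adjacent. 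Hence the colours occurring on the leaves are exactly the $t$ values $\{f(s_1),\dots,f(s_t)\}$, giving $\chi_{ld}(T)\ge t$; the whole task reduces to producing a single vertex whose weight avoids this set.

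To produce that vertex I would look at the extremes of the labelling. Set $M=\max_i f(s_i)$, the largest leaf-colour, and let $v_0$ be the vertex with $f(v_0)=n$. Every leaf-colour is at most $M$, so it suffices to exhibit a vertex of weight exceeding $M$. If $v_0$ is not a support vertex then no support label equals $n$, so $M<n$; any neighbour $u$ of $v_0$ then satisfies $w(u)\ge f(v_0)=n>M$, and $w(u)$ is the desired new colour. If instead $v_0$ is a support vertex then $M=n$, and I would split on the neighbours of $v_0$: if $v_0$ has a non-leaf neighbour $u$, then $u$ has degree at least two, so $w(u)\ge f(v_0)+1=n+1>M$; otherwise every neighbour of $v_0$ is a leaf, which forces $T$ to be the star centred at $v_0$.

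The hard part --- indeed the only delicate point --- is this degenerate star case, where the candidate extra colour can collide with $M$. For the star centred at $v_0$ the centre has weight $\tfrac{n(n-1)}{2}$, which strictly exceeds $M=n$ as soon as $n\ge 4$ and hence supplies the new colour; but for $n=3$ the centre weight equals $n$, i.e.\ it would coincide with the leaf-colour. Here I would invoke the standing hypothesis that $f$ is a genuine local distance antimagic labelling: the centre is adjacent to every leaf, so $w(\text{centre})=w(\text{leaf})$ is forbidden, which means no valid labelling places $n$ at the centre of $P_3$; that remaining configuration has its maximum label on a leaf and is therefore already covered by the first case. Assembling the cases, every valid labelling yields a vertex whose colour lies outside $\{f(s_1),\dots,f(s_t)\}$, and therefore $\chi_{ld}(T)\ge t+1$.
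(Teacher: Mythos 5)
Your proof is correct. Note, however, that this theorem appears in the paper only as a quoted result from Priyadharshini and Nalliah \cite{Nalliah1}; no proof of it is given in the present manuscript, so there is no in-paper argument to compare yours against, and your write-up has to stand on its own. On its own terms it is sound and complete: the $t$ distinct neighbourhoods in $L$ are the singletons $\{s_1\},\dots,\{s_t\}$, each leaf inherits the colour $f(s_i)$ of its support vertex, and injectivity of $f$ already forces $t$ distinct colours on the leaves; your case analysis on the position of the label $n$ then always produces a vertex whose weight strictly exceeds $\max_i f(s_i)$, which is the required $(t+1)$-st colour. You correctly isolated the one genuinely delicate configuration --- the star with the label $n$ on its centre --- and your observation that for $n=3$ such a labelling is not local distance antimagic at all (centre and leaf would both have weight $3$), while for $n\ge 4$ the centre weight $n(n-1)/2$ safely exceeds $n$, closes that case. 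Two minor cosmetic remarks: the appeal to Proposition~\ref{handaprop} is redundant, since $w(l)=f(s)$ together with injectivity of $f$ already separates leaves with distinct supports; and in your first case it is worth saying explicitly that $v_0$ not being a support vertex means none of its neighbours is a leaf, although the inequality $w(u)\ge n>M$ works regardless.
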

	Priyadharshini et al. \cite{Nalliah1} tried to classify connected graphs with local distance antimagic chromatic number 2 in the following theorem. However, in this paper we provide a counter-example to this result. 
	\begin{theorem}\cite{Nalliah1}\label{child2}
		A connected graph $G$ has $\chi_{ld}(G)=2$ if and only if it is a complete bipartite graph.
	\end{theorem}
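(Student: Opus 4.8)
The plan is to \emph{refute} Theorem~\ref{child2} rather than prove it, since one of its two implications is false. First I would dispose of the backward direction, which does hold: if $G=K_{m,n}$ with parts $X,Y$, then every vertex of $X$ has weight $\sum_{y\in Y}f(y)$ and every vertex of $Y$ has weight $\sum_{x\in X}f(x)$, so one only needs a bijection making these two totals unequal, which is always possible; hence $\chi_{ld}(K_{m,n})=2$. The real content is the forward direction, and this is where I would look for a counterexample. The key structural observation is that if $\chi_{ld}(G)=2$ then $\chi(G)\le 2$, so $G$ is bipartite, and the induced $2$-coloring by weights must coincide with the (unique) bipartition of a connected bipartite graph. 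Consequently a counterexample must be a connected bipartite graph $G$, \emph{not} complete bipartite, carrying a bijection under which every vertex of one part receives a common weight $a$ and every vertex of the other part a common weight $b\ne a$.

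To locate such a graph I would use Proposition~\ref{handaprop} as a design constraint: since vertices in the same part are pairwise nonadjacent, constant weight on a part forces $|N(u)\triangle N(v)|\ge 3$ (or $N(u)=N(v)$) for every pair $u,v$ inside that part. This immediately kills the obvious candidates --- even cycles, crown graphs $K_{m,m}$ minus a perfect matching, and $K_{m,n}$ minus a single edge or a single $K_{2,2}$ --- because each produces a same-part pair with symmetric difference exactly $2$. A second, subtler pitfall is that a \emph{universal} vertex in one part has weight equal to the full label sum of the opposite part and can never match the partial sums of its fellow vertices, so the construction must avoid universal vertices as well. The construction I would propose that survives both obstacles is the bipartite complement $G=K_{6,6}-3K_{2,2}$, equivalently the lexicographic product $C_6[\overline{K_2}]$: partition $X$ and $Y$ each into three pairs $X_1,X_2,X_3$ and $Y_1,Y_2,Y_3$ and join $X_i$ to $Y_j$ exactly when $i\ne j$. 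This graph is $4$-regular, connected, satisfies $|N(u)\triangle N(v)|\in\{0,4\}$ within each part, and is plainly not complete bipartite.

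It remains to produce an explicit labeling, and here the arithmetic collapses pleasantly. Writing $S_X,S_Y$ for the label sums of the two parts, every $x\in X_i$ has weight $S_Y$ minus the label-sum of the single missed pair $Y_i$, and symmetrically for $Y$. Thus constant weight on each part reduces to splitting the six $Y$-labels into three equal-sum pairs and the six $X$-labels into three equal-sum pairs. I would take the $Y$-labels to be $\{1,\dots,6\}$ paired as $\{1,6\},\{2,5\},\{3,4\}$ (each summing to $7$) and the $X$-labels to be $\{7,\dots,12\}$ paired as $\{7,12\},\{8,11\},\{9,10\}$ (each summing to $19$). A direct check then gives weight $21-7=14$ on every vertex of $X$ and $57-19=38$ on every vertex of $Y$; since $14\ne 38$ this is a valid local distance antimagic labeling inducing exactly two colors, so $\chi_{ld}(G)=2$ while $G$ is not complete bipartite.

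The hard part, and the step I would expect to absorb most of the effort, is not the verification but the \emph{search}: reconciling connectivity with the symmetric-difference requirement pulls in opposite directions, since making neighborhoods far apart tends to split the graph into components, while keeping it connected tends to create same-part pairs differing in only one or two neighbors. The blow-up viewpoint $C_6[\overline{K_2}]$ is what makes these constraints compatible --- the three-fold symmetry forces all same-part symmetric differences to be $0$ or $4$ and eliminates universal vertices --- and it also suggests the natural generalization I would pursue next, namely $C_{2k}[\overline{K_t}]$ and related balanced blow-ups, to map out a whole family of non-complete-bipartite graphs with $\chi_{ld}=2$.
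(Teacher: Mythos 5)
Your reading of the situation matches the paper's: Theorem~\ref{child2} is false as stated, the backward direction is the easy half (it is the $r=2$ case of Theorem~\ref{multipartite}), and the only honest treatment of the forward direction is to exhibit a connected, non-complete-bipartite graph with $\chi_{ld}=2$. The paper does exactly this, but by a different route: it first proves Theorem~\ref{directwithbipartite} on direct products of regular graphs with complete bipartite graphs, then invokes Weichsel's connectivity criterion to see that $C_{2n+1}\times K_{n_1,n_2}$ is connected (since $C_{2n+1}$ is non-bipartite), and notes that this connected bipartite graph with $\chi_{ld}=2$ is not complete bipartite. You instead hand-build a counterexample, using Proposition~\ref{handaprop} and the uniqueness of the bipartition of a connected bipartite graph as search constraints; your verification is correct (the equal-sum pairs $\{1,6\},\{2,5\},\{3,4\}$ and $\{7,12\},\{8,11\},\{9,10\}$ do give constant weights $14$ and $38$ on the two sides, and the graph is connected, $4$-regular and visibly not complete bipartite). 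Amusingly, the two routes land on the same graph: your $K_{6,6}-3K_{2,2}\cong C_6[\overline{K_2}]$ is isomorphic to $C_3\times K_{2,2}$, the smallest member of the paper's family. What your approach buys is a short, self-contained refutation with an explicit labeling and a structural explanation (via symmetric differences of neighborhoods) of why smaller candidates such as even cycles and crowns cannot work; what the paper's approach buys is an infinite family of counterexamples obtained for free from machinery it develops anyway, with no case-by-case search. Your proposed generalization to balanced blow-ups $C_{2k}[\overline{K_t}]$ is in the same spirit as, though not identical to, the paper's family $C_{2n+1}\times K_{n_1,n_2}$.
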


	\begin{theorem}\cite{Nalliah4},\cite{Handa}\label{cycle}
		\begin{equation*}\chi_{ld}(C_n)=
			\begin{cases}
				2 &\text{n=4},\\
				3 &\text{n$\ \in \{3,12\}$},\\
				4 &\text{n$\ \in \{6,8,10,14\}$},\\
				5 &\text{n$\ \in \{5,7,9\}$}.
			\end{cases}
		\end{equation*}
		\begin{align*}
			4\leq \chi_{ld}(C_n)\leq 5;\ &n\in \{11,13\},\\
			4\leq \chi_{ld}(C_n)\leq 6;\ &n\geq 15.
		\end{align*}
	\end{theorem}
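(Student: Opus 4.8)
The plan is to separate the lower and upper bounds and, for the lower bounds, to build three successively sharper obstructions. Throughout I write the cycle as $v_0v_1\cdots v_{n-1}v_0$, put $a_i=f(v_i)$, and record the weight $w_i=a_{i-1}+a_{i+1}$ (indices mod $n$). Two bounds are immediate: since every local distance antimagic coloring is proper, $\chi_{ld}(C_n)\geq\chi(C_n)$, which is $2$ for even $n$ and $3$ for odd $n$; and for $n\geq 5$ the pair $v_i,v_{i+2}$ has $|N(v_i)\triangle N(v_{i+2})|=|\{v_{i-1},v_{i+3}\}|=2$, so Proposition~\ref{handaprop} forces $w_i\neq w_{i+2}$. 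Hence for $n\geq 5$ the induced coloring is a proper coloring of the square $C_n^2$, giving $\chi_{ld}(C_n)\geq\chi(C_n^2)$. Computing $\chi(C_n^2)$ (it is $5$ for $n=5$, equal to $3$ when $3\mid n$, and equal to $4$ when $3\nmid n$ and $n\geq 7$) already settles the lower bounds for $C_5$ and for all $n\geq 7$ with $3\nmid n$, in particular $C_8,C_{10},C_{11},C_{13},C_{14}$ and the generic case $n\geq 15$; the graph $C_4$ is handled directly, since $N(v_0)=N(v_2)$ forces only two colors there.

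The first delicate point is to rule out $3$ colors when $3\mid n$. The only proper $3$-coloring of $C_n^2$ is the period-$3$ pattern, because three consecutive vertices form a triangle; hence a $3$-color labeling must make $a_j+a_{j+2}$ depend only on $j\bmod 3$. Following the step-by-two chain $b_k=a_{2k}$ converts this into $b_k+b_{k+1}=d_k$ with $d$ of period $3$, and telescoping over six steps yields $b_{k+6}=b_k$. The chain is cyclic of length $L$ (namely $L=n/2$ for even $n$ and $L=n$ for odd $n$), so the $b_k$ are constant on cosets of $\langle\gcd(6,L)\rangle$; since they are distinct labels we need $L\mid 6$, which eliminates every multiple of $3$ except $n\in\{6,12\}$. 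For $n=6$ a parity count finishes the job: summing the three even-index equations gives $2(a_0+a_2+a_4)=c_0+c_1+c_2$ and likewise for the odd indices, whence $c_0+c_1+c_2=21$ and each half-sum equals $21/2$, which is impossible. This secures $\chi_{ld}(C_n)\geq 4$ for $C_6$ and for all multiples of $3$ with $n\geq 15$, leaving $C_{12}$ as the unique multiple of $3$ that can still reach $3$.

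The hard part will be the sharp bounds $\chi_{ld}(C_7)\geq 5$ and $\chi_{ld}(C_9)\geq 5$, where the $C_n^2$ bound only gives $4$ and $3$ respectively. Here I expect no single inequality to suffice and would analyze the weight multiset directly: the weights lie in $[3,2n-1]$, sum to $n(n+1)$, and by the previous step each color class is an independent set of $C_n^2$, i.e.\ a set of vertices pairwise at cyclic distance $\geq 3$, so its size is at most $\lfloor n/3\rfloor$. For $n=7$ this forces the size profile $2,2,2,1$ and for $n=9$ a profile from $\{3,3,2,1\},\{3,2,2,2\}$; I would then examine which distance-$3$ pairs can share a neighbor-sum and propagate the forced label equalities around the cycle to show that no four-class assignment is simultaneously a bijection and antimagic. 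This case analysis, rather than any clean counting bound, is the real obstacle, and it is exactly what separates $C_7,C_9$ (five colors) from $C_8,C_{10}$ (four colors).

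For the upper bounds I would exhibit explicit labelings. The exact small values ($C_3,C_4,C_5$, the four-color cases $C_6,C_8,C_{10},C_{14}$, the five-color cases $C_7,C_9$, and the three-color case $C_{12}$) each reduce to writing one bijection and checking finitely many weights; for $C_{12}$ the period-$6$ chain $b_k+b_{k+1}=d_k$ from the second paragraph can be solved directly to produce a valid three-color labeling, and for $n\in\{11,13\}$ a five-color labeling gives the claimed upper end of the range. The one construction carrying real content is the uniform bound $\chi_{ld}(C_n)\leq 6$ for $n\geq 15$: here I would design a labeling that is periodic away from a bounded patch, arranging small and large labels so the neighbor-sums fall into at most six prescribed values while keeping consecutive weights distinct, and then adjust the $O(1)$ boundary vertices to absorb the dependence of $n$ modulo the period. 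Verifying that this patched labeling remains antimagic and uses the claimed number of colors uniformly in $n$ is the main constructive step.
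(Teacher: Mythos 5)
This theorem is quoted from \cite{Nalliah4} and \cite{Handa}; the paper contains no proof of it, so there is nothing internal to compare your argument against. Judged on its own terms, your lower-bound machinery is sound where you actually carry it out: Proposition~\ref{handaprop} does give $w_i\neq w_{i+2}$ for $n\geq 5$, so the induced coloring is proper on $C_n^2$, and the values of $\chi(C_n^2)$ correctly yield $\chi_{ld}(C_5)\geq 5$ and $\chi_{ld}(C_n)\geq 4$ for $n\geq 7$ with $3\nmid n$. The telescoping argument for $3\mid n$ is also correct: $b_{k+6}=b_k$ together with injectivity forces $L\mid 6$, which kills $3$ colors for every multiple of $3$ except $n\in\{3,6,12\}$ (note it also disposes of $n=9$ at the level of three colors), and your parity count for $C_6$ is right. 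This is a genuinely clean way to organize the lower bounds.

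There are, however, two substantive gaps that prevent this from being a proof. First, the bounds $\chi_{ld}(C_7)\geq 5$ and $\chi_{ld}(C_9)\geq 5$ --- which you yourself identify as the hard part --- are only announced as a future case analysis on weight-multiset profiles; nothing is actually verified, and these are precisely the cases where the $C_n^2$ obstruction is insufficient, so the theorem's equalities for $n\in\{7,9\}$ remain unproved. Second, every upper bound is deferred to labelings you do not exhibit: the exact constructions for the small cases (including the three-color labeling of $C_{12}$, whose existence you infer from solvability of the period-$6$ recurrence but never solve), the five-color labelings for $n\in\{11,13\}$, and above all the uniform six-color construction for all $n\geq 15$, where the periodic-pattern-plus-boundary-patch idea is plausible but carries all of the content and none of it is checked. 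Until those constructions are written down and verified, the statement is supported only on its lower-bound side, and even there not for $n\in\{7,9\}$.
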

	\begin{theorem}\cite{Nalliah4},\cite{Handa}\label{path}
		\begin{equation*}
			\chi_{ld}(P_n)=
			\begin{cases}
				2 &\text{n$\ \in \{2,3\}$},\\
				3 &\text{n$\ \in \{5,11\}$},\\
				4 &\text{n$\ \in \{4,6,7,8,9,10\}$}.
			\end{cases}
		\end{equation*}
		\begin{equation*}
			4 \leq \chi_{ld}(P_n)\leq 
			\begin{cases}
				5 &\text{$n\geq 12$, $n$ is even},\\
				6 &\text{$n\geq 13$, $n$ is odd}.
			\end{cases}
		\end{equation*}
		
	\end{theorem}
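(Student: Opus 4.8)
The plan is to treat the exact values for small $n$ and the asymptotic bounds for large $n$ separately, obtaining every lower bound from Proposition~\ref{handaprop} and Theorem~\ref{nalliahleaf}, and every upper bound from an explicit labeling. Throughout I write $f(v_i)=g_i$, so that the weight of an internal vertex is $w(v_i)=g_{i-1}+g_{i+1}$, while the two leaves carry $w(v_1)=g_2$ and $w(v_n)=g_{n-1}$.

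For the lower bounds, first note that $P_2$ and $P_3$ are complete bipartite, so Theorem~\ref{child2} gives $\chi_{ld}=2$; for $n\ge 4$ the two leaves have the distinct neighbourhoods $\{v_2\}$ and $\{v_{n-1}\}$, so $t=2$ and Theorem~\ref{nalliahleaf} yields $\chi_{ld}(P_n)\ge 3$. To reach $\ge 4$ I would exploit Proposition~\ref{handaprop}: any two vertices at distance $2$ on the path satisfy $|N(v_i)\triangle N(v_{i+2})|\le 2$ (it equals $2$ for internal pairs and $1$ when a leaf is involved), so their weights differ. Since adjacent weights also differ, every three consecutive vertices receive three distinct colours; if only three colours were available, this forces the weight sequence to be periodic with period $3$, i.e. $w(v_i)=w(v_{i+3})$ for all admissible $i$. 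Expanding this in terms of the $g_i$ produces a linear system that a bijection onto $\{1,\dots,n\}$ must satisfy, and the key step is to show this system is solvable only for $n\in\{5,11\}$; for those two values one exhibits an explicit permutation, and for all other $n\ge 4$ the incompatibility of the periodicity relations with bijectivity gives $\chi_{ld}(P_n)\ge 4$.

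For the upper bounds I would give explicit labelings. For the finitely many exact cases $n\in\{4,6,7,8,9,10\}$ (value $4$) and $n\in\{5,11\}$ (value $3$), bespoke permutations together with the matching lower bound settle the value. For $n\ge 12$ even I would construct a labeling using five colours, and for $n\ge 13$ odd one using six colours, in each case partitioning the positions by parity and assigning the labels $1,\dots,n$ in a patterned, monotone-in-blocks fashion so that the internal sums $g_{i-1}+g_{i+1}$ collapse onto a bounded set of values while consecutive and distance-$2$ weights stay distinct; the two leaf weights are then fitted into the available colour set by hand. The extra colour needed in the odd case reflects the parity imbalance between the two independent sets of odd- and even-indexed vertices.

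The main obstacle is the lower-bound classification: showing that the period-$3$ weight system admits a bijective solution precisely when $n\in\{5,11\}$, which is a genuinely number-theoretic statement rather than a routine inequality, and which explains why exactly these two values drop below $4$. A secondary difficulty is verifying the general constructions at the boundary, where the leaf weights and the first and last internal sums must be placed into the same five (respectively six) colours without collision; reconciling this for the even and odd parities is what separates the bound $5$ from $6$.
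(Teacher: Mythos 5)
This statement is quoted from \cite{Nalliah4} and \cite{Handa}; the paper contains no proof of it, so there is nothing internal to compare against, and your proposal has to be judged on its own. The skeleton you describe is the right one and matches how such results are established in the cited sources: Theorem~\ref{child2}/\ref{nalliahleaf} for the bounds $2$ and $3$, Proposition~\ref{handaprop} applied to distance-two pairs to force any $3$-weight labeling to have a weight sequence of period $3$, and explicit labelings for the upper bounds. The period-$3$ reduction is correct as you state it, and the resulting relations do produce contradictions in small cases (e.g.\ for $n=6$ the relations $g_3+g_5=g_2$ and $g_5=g_2+g_4$ give $g_3+g_4=0$; for $n=12$ one similarly derives $g_9+g_4=0$).

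However, as written the proposal is an outline rather than a proof, and both load-bearing steps are missing. First, the assertion that the period-$3$ system is incompatible with a bijection onto $\{1,\dots,n\}$ for every $n\ge 4$ other than $n\in\{5,11\}$ is exactly the content of the lower bound $\chi_{ld}(P_n)\ge 4$, and you do not carry it out; it is an infinite family of cases (all $n\ge 12$ in particular), so one needs a uniform argument --- for instance setting $d_i=g_{i+3}-g_i$, observing that the internal relations force $d_{i+2}=-d_i$, and combining this with the two leaf relations $g_2=g_3+g_5$ and $g_{n-1}=g_{n-4}+g_{n-2}$ to reach a contradiction in each residue class of $n$ modulo~$3$ --- together with explicit $3$-weight labelings witnessing the exceptional cases $n=5$ and $n=11$. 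Naming this ``the main obstacle'' does not discharge it. Second, none of the upper-bound labelings are exhibited: the exact values for $n\in\{4,6,7,8,9,10\}$ and the bounds $5$ and $6$ for large even and odd $n$ all require concrete constructions whose verification (in particular the behaviour at the two ends of the path, which you correctly flag) is the bulk of the work. Until both of these are supplied, the statement is not proved.
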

	\begin{theorem}\cite{Handa}\label{multipartite}
		The complete multipartite graph $G=K_{n_1,n_2,\dots,n_r}$
		is local distance antimagic
		with $\chi_{ld}(G) = r$.
	\end{theorem}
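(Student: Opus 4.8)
The plan is to exploit the very rigid neighborhood structure of a complete multipartite graph. Write $V = V_1 \cup \dots \cup V_r$ for the parts, with $|V_i| = n_i$ and $n = \sum_{i=1}^r n_i$, and assume $r \ge 2$ so that $G$ has no isolated vertices. For any bijection $f \colon V \to \{1,\dots,n\}$ put $S = \sum_{x \in V} f(x) = \frac{n(n+1)}{2}$ and $S_i = \sum_{x \in V_i} f(x)$. The first step is the observation that a vertex $u \in V_i$ satisfies $N(u) = V \setminus V_i$, so that $w(u) = S - S_i$ depends only on the part containing $u$ and not on $u$ itself. Consequently every vertex of $V_i$ receives the same weight $S - S_i$.

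Since two vertices are adjacent precisely when they lie in different parts, the requirement $w(u) \ne w(v)$ for adjacent $u,v$ reduces to $S_i \ne S_j$ whenever $i \ne j$; that is, $f$ is a local distance antimagic labeling if and only if the part sums $S_1, \dots, S_r$ are pairwise distinct. Whenever this holds the induced coloring assigns the single color $S - S_i$ to all of $V_i$, and these $r$ colors are pairwise distinct, so \emph{every} valid labeling uses exactly $r$ colors. For the lower bound I would invoke $\chi_{ld}(G) \ge \chi(G)$ together with $\chi(K_{n_1,\dots,n_r}) = r$, giving $\chi_{ld}(G) \ge r$; the preceding remark shows no valid labeling can do better, so the whole problem collapses to producing a single labeling with distinct part sums.

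The remaining and genuinely combinatorial step is to partition $\{1,\dots,n\}$ into blocks of the prescribed sizes with pairwise distinct sums. I would first relabel the parts so that $n_1 \le n_2 \le \dots \le n_r$ and then assign consecutive blocks of integers, giving $V_i$ the labels $a_i+1, \dots, a_i+n_i$ where $a_i = n_1 + \dots + n_{i-1}$. A short computation gives $S_i = n_i a_i + \binom{n_i+1}{2}$, and using $a_{i+1} = a_i + n_i$ one finds
\[
S_{i+1} - S_i = (n_{i+1}-n_i)\,a_i + n_{i+1}n_i + \binom{n_{i+1}+1}{2} - \binom{n_i+1}{2}.
\]
Because the sizes are sorted non-decreasingly, every term here is nonnegative and the middle term $n_{i+1}n_i$ is strictly positive, so $S_1 < S_2 < \dots < S_r$ and in particular the part sums are distinct. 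This furnishes a local distance antimagic labeling using exactly $r$ colors, whence $\chi_{ld}(G) \le r$, and combined with the lower bound $\chi_{ld}(G) = r$. The only real obstacle is this last construction, and the idea that removes it is to sort the parts by size before assigning consecutive blocks: ordering matters, since a naive consecutive assignment can fail (for sizes $2,1$ it yields equal sums), whereas sorting forces the block sums to increase strictly and sidesteps all such collisions.
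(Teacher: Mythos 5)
Your proposal is correct and complete. Note that the paper itself does not prove this statement; it is quoted as Theorem~\ref{multipartite} from Handa et al.~\cite{Handa}, so there is no in-paper argument to compare against. Your reduction is the natural one and all steps check out: since $N(u)=V\setminus V_i$ for $u\in V_i$, every valid labeling colors each part monochromatically with pairwise distinct colors, forcing exactly $r$ colors; the lower bound follows from $\chi_{ld}(G)\ge\chi(G)=r$; and the sorted consecutive-block assignment gives $S_{i+1}-S_i=(n_{i+1}-n_i)a_i+n_{i+1}n_i+\binom{n_{i+1}+1}{2}-\binom{n_i+1}{2}>0$, so the part sums are strictly increasing and hence distinct. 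Your remark that sorting is essential (the unsorted sizes $2,1$ give equal block sums) is a worthwhile observation that a careless version of this construction would miss.
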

	\begin{theorem}\cite{Handa}
		The wheel $W_n$, $n\geq 3$, is local distance antimagic with $3\leq \chi_{ld}(W_n) \leq 7$.
	\end{theorem}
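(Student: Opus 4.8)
The plan is to treat the two bounds separately. Write $c$ for the hub and $v_1,\dots,v_n$ for the rim vertices of $W_n$ (indices taken mod $n$), so that $f\colon V\to\{1,\dots,n+1\}$ and, since $c$ is adjacent to every rim vertex while $v_i$ is adjacent to $v_{i-1}$, $v_{i+1}$ and $c$, the weights are $w(c)=\sum_{i=1}^n f(v_i)$ and $w(v_i)=f(v_{i-1})+f(v_{i+1})+f(c)$. For the lower bound I would simply observe that for every $n\ge 3$ the hub together with any rim edge $v_iv_{i+1}$ forms a triangle, so $\chi(W_n)\ge 3$; since every local distance antimagic labeling induces a proper coloring, the inequality $\chi_{ld}(G)\ge\chi(G)$ noted in the introduction gives $\chi_{ld}(W_n)\ge 3$.

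For the upper bound the key idea is to reduce to the already-known cycle result (Theorem \ref{cycle}). I would assign the largest label to the hub, $f(c)=n+1$, so that the restriction $g=f|_{\{v_1,\dots,v_n\}}$ is a bijection onto $\{1,\dots,n\}$, i.e.\ an honest labeling of the rim cycle $C_n$. Then $w(v_i)=g(v_{i-1})+g(v_{i+1})+(n+1)=w_{C_n}(v_i)+(n+1)$, so the rim weights are exactly the cycle weights translated by the constant $n+1$. Choosing $g$ to be an optimal local distance antimagic labeling of $C_n$ (which exists for all $n\ge 3$ by Theorem \ref{cycle}), this translation preserves all equalities and inequalities among rim weights: adjacent rim vertices still receive distinct colors, and the number of distinct colors used on the rim equals $\chi_{ld}(C_n)\le 6$. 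This already shows $W_n$ is local distance antimagic.

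It remains to control the hub, and here lies the main obstacle: the one new constraint is $w(c)\ne w(v_i)$ for every $i$ (the hub is adjacent to all rim vertices), so a coincidence between the hub weight and some rim weight must be excluded. I would handle this by a size estimate: $w(c)=\tfrac{n(n+1)}{2}$, while every rim weight is at most $n+(n-1)+(n+1)=3n$, and $\tfrac{n(n+1)}{2}>3n$ exactly when $n\ge 6$. Hence for $n\ge 6$ the hub weight strictly exceeds all rim weights, so it contributes a single fresh color and the total is at most $\chi_{ld}(C_n)+1\le 7$. Finally the small cases $n\in\{3,4,5\}$ I would dispose of directly: $W_3=K_4$ gives $\chi_{ld}=4$ by Theorem \ref{multipartite}, and for $W_4,W_5$ one exhibits explicit labelings (for $n=5$ one must place the labels $n-1,n$ on adjacent rim vertices, or equivalently choose $f(c)$ carefully, precisely to break the borderline coincidence $w(c)=3n$ that the estimate above does not rule out). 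Combining the lower bound with these upper bounds yields $3\le\chi_{ld}(W_n)\le 7$.
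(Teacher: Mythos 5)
This theorem is imported from \cite{Handa}; the paper you were given states it without proof, so there is no in-paper argument to compare yours against. Judged on its own terms, your proposal is essentially sound. The lower bound via the triangle $\{c,v_i,v_{i+1}\}$ and the inequality $\chi_{ld}(G)\geq\chi(G)$ is correct. The upper bound correctly exploits the fact that putting $f(c)=n+1$ turns the rim weights into a constant translate of the weights of a labeling of $C_n$, so the rim contributes at most $\chi_{ld}(C_n)\leq 6$ colors (by Theorem \ref{cycle}), and the size comparison $w(c)=\tfrac{n(n+1)}{2}>3n\geq\max_i w(v_i)$ for $n\geq 6$ does isolate the hub as one additional color, giving $7$. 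Two caveats. First, for $n\in\{4,5\}$ you assert but do not exhibit the required labelings; they do exist (e.g.\ for $n=4$ the rim order $1,2,3,4$ with $f(c)=5$ gives rim weights $11,9,11,9$ and hub weight $10$; for $n=5$ the rim order $3,4,5,1,2$ with $f(c)=6$ gives rim weights $12,14,11,13,10$ and hub weight $15$), and a complete write-up should include them, since the borderline coincidence $w(c)=3n$ at $n=5$ is exactly the case your estimate fails to cover. Second, your bound of $7$ is not self-contained: it inherits the upper bound $\chi_{ld}(C_n)\leq 6$ from Theorem \ref{cycle}, which is itself a nontrivial result of the same authors, so your argument is best read as a reduction of the wheel bound to the cycle bound rather than an independent proof.
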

	\begin{theorem}\cite{Handa}
		The complete graph $K_n$, $n\geq 2$, is local distance antimagic with $\chi_{ld}(K_n)=n$.
	\end{theorem}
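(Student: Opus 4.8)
The plan is to exploit the fact that in $K_n$ every vertex is adjacent to all others, so the open neighborhood of a vertex $v$ is $N(v) = V \setminus \{v\}$. First I would compute the weight induced by an arbitrary bijection $f \colon V \to \{1,2,\dots,n\}$. Writing $S = \sum_{i=1}^{n} i = \frac{n(n+1)}{2}$ for the total label sum, the weight of any vertex $v$ is
\[
w(v) = \sum_{x \in N(v)} f(x) = S - f(v).
\]
This single identity is the crux of the argument, and after it everything follows immediately.

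Next I would establish that $K_n$ is local distance antimagic. Since $f$ is a bijection, the labels $f(v)$ are pairwise distinct, and therefore so are the weights $w(v) = S - f(v)$. In particular $w(u) \neq w(v)$ for every pair of adjacent vertices (indeed for every pair of distinct vertices), so \emph{every} bijection $f$ is already a local distance antimagic labeling of $K_n$. This proves that $K_n$ admits such a labeling.

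For the chromatic number I would argue by matching bounds. For the upper bound, observe that the coloring induced by any local distance antimagic labeling assigns the $n$ pairwise distinct values $w(v) = S - f(v)$, hence uses exactly $n$ colors; thus $\chi_{ld}(K_n) \le n$. For the lower bound I would invoke the general inequality $\chi_{ld}(G) \ge \chi(G)$ noted in the introduction, together with $\chi(K_n) = n$, to obtain $\chi_{ld}(K_n) \ge n$. Combining the two bounds yields $\chi_{ld}(K_n) = n$, as desired.

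The statement is elementary and presents no real obstacle: the only point requiring care is the initial reduction of the weight to the affine expression $S - f(v)$, which relies critically on the completeness of the graph, so that $N(v)$ misses only $v$ itself. For any graph that is not complete this simplification fails, which is precisely why the general problem is hard; here it trivializes the antimagic condition and forces every labeling to induce a proper rainbow coloring.
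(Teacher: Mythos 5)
Your proof is correct and is essentially the standard argument for this cited result: in $K_n$ every weight is $w(v)=\tfrac{n(n+1)}{2}-f(v)$, so any bijection yields $n$ pairwise distinct weights, and the lower bound follows from $\chi_{ld}(K_n)\ge\chi(K_n)=n$. The paper quotes this theorem from Handa et al.\ without reproducing a proof, and your argument is the natural one underlying that reference.
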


	\begin{theorem}\cite{yamini}\cite{Handa}\label{friendship}
		The friendship graph $F_n$, $n\geq 2$, is local distance antimagic with $\chi_{ld}(F_n)=2n+1$.
	\end{theorem}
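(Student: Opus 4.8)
The plan is to exploit the rigid structure of the friendship graph $F_n$, which is $n$ triangles sharing a common center $c$, so that $V(F_n)=\{c\}\cup\{u_i,v_i : 1\le i\le n\}$ with $|V(F_n)|=2n+1$. First I would record the open neighborhoods explicitly, since every weight computation follows from them: $N(c)=\{u_i,v_i : 1\le i\le n\}$, while $N(u_i)=\{c,v_i\}$ and $N(v_i)=\{c,u_i\}$ for each $i$. Consequently, for any bijection $f$ one has $w(c)=(2n+1)(n+1)-f(c)$, whereas $w(u_i)=f(c)+f(v_i)$ and $w(v_i)=f(c)+f(u_i)$; that is, every outer weight is $f(c)$ plus the label of an outer vertex.

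For the lower bound I would first observe that any two distinct outer vertices $x,y$ satisfy $|N(x)\triangle N(y)|=2$: they share only the center $c$, and their two partner-vertices are distinct. Proposition \ref{handaprop} then forces $w(x)\not=w(y)$, so the $2n$ outer vertices receive $2n$ pairwise distinct colors. Since $c$ is adjacent to every outer vertex, the definition of a local distance antimagic labeling forces $w(c)$ to differ from each outer weight, contributing a $(2n+1)$-th color. Hence every admissible labeling of $F_n$ uses at least $2n+1$ colors, giving $\chi_{ld}(F_n)\ge 2n+1$.

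For the matching upper bound, since $|V(F_n)|=2n+1$ no coloring can use more than $2n+1$ colors, so it suffices to produce a single local distance antimagic labeling. I would set $f(c)=2n+1$ and let $f$ restrict to an arbitrary bijection from the outer vertices onto $\{1,\dots,2n\}$. Then $w(c)=(2n+1)(n+1)-(2n+1)=n(2n+1)$, while each outer weight lies in $\{2n+2,\dots,4n+1\}$; moreover each edge $u_iv_i$ is automatically properly colored because $w(u_i)-w(v_i)=f(u_i)-f(v_i)\not=0$ by bijectivity.

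The only genuine computation is checking that the center never collides with an outer vertex, i.e.\ $n(2n+1)\notin\{2n+2,\dots,4n+1\}$. This is where the hypothesis $n\ge 2$ enters: we need $n(2n+1)>4n+1$, equivalently $2n^2-3n-1>0$, which holds precisely for $n\ge 2$. I expect this inequality to be the only mildly delicate point of the argument; everything else is forced by Proposition \ref{handaprop} and the bijectivity of $f$, after which one concludes $\chi_{ld}(F_n)=2n+1$.
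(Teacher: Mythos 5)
Your proof is correct. The paper states this theorem as a cited result (from Divya--Yamini and Handa et al.) without reproducing a proof, and your argument is the standard one: Proposition \ref{handaprop} forces all $2n$ outer vertices to get distinct weights (every pair has neighborhood symmetric difference $2$), adjacency to the center supplies the $(2n+1)$-st color for the lower bound, and placing the largest label on the center makes $w(c)=n(2n+1)$ exceed every outer weight for $n\ge 2$, yielding a valid labeling for the upper bound.
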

	
	\begin{theorem}\cite{yamini}\label{bistar}
		The bistar graph $B_{m,n}$, $m,n\geq 2$, is local distance antimagic with $\chi_{ld}(B_{m,n})=4.$
	\end{theorem}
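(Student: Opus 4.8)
The plan is to fix notation, observe that a bistar admits only four possible weight values, prove that these four are always forced to be distinct (which gives the lower bound essentially for free), and then construct an explicit labeling realizing them. Write $u,v$ for the two centres joined by the edge $uv$, let $u_1,\dots,u_m$ be the leaves adjacent to $u$ and $v_1,\dots,v_n$ the leaves adjacent to $v$, so that the order is $N=m+n+2$. For any bijection $f\colon V\to\{1,\dots,N\}$ the open-neighbourhood weights are $w(u_i)=f(u)$ for every $i$, $w(v_j)=f(v)$ for every $j$, $w(u)=\sum_{i=1}^{m} f(u_i)+f(v)$ and $w(v)=\sum_{j=1}^{n} f(v_j)+f(u)$. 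Hence the only colours that can ever occur are $f(u),f(v),w(u),w(v)$, and each of these four classes is non-empty because $m,n\geq 2$.

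For the lower bound I would show that under any local distance antimagic labeling these four values are pairwise distinct, so that exactly four colours appear. Two of the six inequalities are automatic from positivity: since each leaf label is at least $1$, we have $w(v)=\sum_j f(v_j)+f(u)>f(u)$ and $w(u)=\sum_i f(u_i)+f(v)>f(v)$, giving $f(u)\neq w(v)$ and $f(v)\neq w(u)$; a third, $f(u)\neq f(v)$, holds because $f$ is injective. The remaining three, namely $f(u)\neq w(u)$, $f(v)\neq w(v)$ and $w(u)\neq w(v)$, are precisely the distance-antimagic conditions on the edges $u_iu$, $v_jv$ and $uv$, so they hold by hypothesis. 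Consequently every local distance antimagic labeling of $B_{m,n}$ uses exactly four colours, which already yields $\chi_{ld}(B_{m,n})\geq 4$.

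For the matching upper bound I would exhibit one such labeling, and the cleanest device is to place the two smallest labels on the centres: set $f(u)=1$ and $f(v)=2$, and distribute $\{3,4,\dots,N\}$ among the leaves. Then $w(u)=A+2$ and $w(v)=B+1$, where $A=\sum_i f(u_i)$ and $B=\sum_j f(v_j)=S-A$ with $S=\sum_{k=3}^{N}k$. Because every leaf label is at least $3$, we automatically get $w(u)>1=f(u)$ and $w(v)>2=f(v)$, so two of the three antimagic conditions are satisfied for free; the only remaining requirement is $w(u)\neq w(v)$, that is, $2A\neq S-1$.

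The one point that needs care — and the only genuine obstacle — is guaranteeing a leaf-partition with $A\neq (S-1)/2$. If $S-1$ is odd the condition is vacuous; if $S-1$ is even I would argue that, since $m,n\geq 2$, the attainable values of $A$ as the partition of $\{3,\dots,N\}$ varies include at least two distinct integers. Concretely, starting from the partition that assigns $\{3,\dots,m+2\}$ to the $u$-leaves, swapping the smallest $u$-leaf label with the largest $v$-leaf label strictly increases $A$, so some admissible partition avoids the single forbidden value $(S-1)/2$. Fixing such a partition produces a local distance antimagic labeling, which by the colour count above uses exactly four colours; hence $\chi_{ld}(B_{m,n})\leq 4$, and combined with the lower bound we conclude $\chi_{ld}(B_{m,n})=4$.
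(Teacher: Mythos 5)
Your argument is correct and complete. Note, however, that the paper does not prove this statement at all: Theorem~\ref{bistar} is quoted from Divya and Yamini \cite{yamini} as a known result, so there is no in-paper proof to compare against. Your proof stands on its own and is the natural one: since $N(u_i)=\{u\}$, $N(v_j)=\{v\}$, $N(u)=\{u_1,\dots,u_m,v\}$ and $N(v)=\{v_1,\dots,v_n,u\}$, only the four values $f(u)$, $f(v)$, $w(u)$, $w(v)$ can occur as colours; three of the six required inequalities are exactly the antimagic conditions on the edges $u_iu$, $v_jv$, $uv$, and the other three follow from injectivity and positivity of the labels, so every local distance antimagic labeling uses exactly four colours. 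Your construction with $f(u)=1$, $f(v)=2$ is also sound, and you correctly identified the one nontrivial point, namely avoiding $2A=S-1$; the swap of the label $3$ with the label $N$ changes $A$ by $N-3>0$, so at least one of the two partitions works. (As a sanity check, the lower bound from Theorem~\ref{nalliahleaf} only gives $\chi_{ld}(B_{m,n})\geq 3$ here, so your direct four-colour argument is genuinely needed for the exact value.)
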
  
	\section{Neighborhood Balanced Coloring}
	The concept of neighborhood balanced coloring was introduced by Freyberg et al. \cite{NBC} as follows. Let $G$ be a graph with each vertex colored with one of the two colors (e.g. red and blue). If there is a vertex which has equal number of neighbors of each color, then that vertex is called neighborhood balanced colored. If every vertex of a graph is neighborhood balanced colored then the coloring is called neighborhood balanced coloring (NBC). A neighborhood balanced colored vertex has even degree.\\ Denote by $\sigma(BB)$ the number of blue-blue edges, $\sigma(RR)$ the number of red-red edges and $\sigma(RB)$ the number of red-blue edges in a neighborhood balanced coloring $\sigma$ of a graph. Similarly, let $\sigma(B)$ and $\sigma(R)$ denote the number of blue and red vertices, respectively in a neighborhood balanced graph coloring $\sigma$. \\
	For convenience, we use $\sigma \colon V(G)\to \{1,-1\}$ by labeling all red vertices $-1$ and blue vertices $1$ to denote a neighborhood balanced coloring of graphs. For more details, refer \cite{NBC}.
	\begin{theorem}\cite{NBC}
		Suppose a graph $G$ admits a neighborhood balanced coloring $\sigma$, then $\sigma(RB)=\frac{|E(G)|}{2}$ and $\sigma(RR)=\sigma(BB)=\frac{|E(G)|}{4}$.
	\end{theorem}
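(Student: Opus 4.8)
The plan is to exploit the defining property of a neighborhood balanced coloring at each vertex and convert it into a double-counting identity on edges. The crucial local fact is that, under the $\pm 1$ encoding $\sigma\colon V(G)\to\{1,-1\}$, a vertex $v$ being neighborhood balanced colored says precisely that $v$ has exactly as many blue neighbors as red neighbors (equivalently $\sum_{x\in N(v)}\sigma(x)=0$). I would first establish the two partial relations $2\sigma(BB)=\sigma(RB)$ and $2\sigma(RR)=\sigma(RB)$ separately, and then close the argument with the trivial edge partition.

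First I would fix attention on the blue vertices. For each blue vertex $v$, the number of its blue neighbors equals the number of its red neighbors. Summing the number of blue neighbors over all blue vertices counts every blue-blue edge twice (once from each of its two blue endpoints), so it equals $2\sigma(BB)$; summing the number of red neighbors over all blue vertices counts every red-blue edge exactly once (from its unique blue endpoint), so it equals $\sigma(RB)$. Because these two sums agree term by term, I obtain $2\sigma(BB)=\sigma(RB)$. Running the identical argument over the red vertices gives $2\sigma(RR)=\sigma(RB)$, and in particular $\sigma(RR)=\sigma(BB)$.

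Finally I would combine these with the edge partition $\sigma(BB)+\sigma(RR)+\sigma(RB)=|E(G)|$. Substituting $\sigma(BB)=\sigma(RR)=\tfrac{1}{2}\sigma(RB)$ yields $2\sigma(RB)=|E(G)|$, hence $\sigma(RB)=\tfrac{|E(G)|}{2}$ and $\sigma(RR)=\sigma(BB)=\tfrac{|E(G)|}{4}$, as claimed.

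The step requiring the most care is the bookkeeping in the double count: one must keep straight that a monochromatic (blue-blue or red-red) edge is seen twice when we sum over the vertices of that single color, whereas a bichromatic (red-blue) edge is seen only once from its blue endpoint, and this factor-of-two discrepancy is exactly what forces $2\sigma(BB)=\sigma(RB)$. An equivalent and perhaps cleaner route, which I would keep in reserve, is to evaluate $\sum_{v}\sigma(v)\sum_{x\in N(v)}\sigma(x)$ in two ways: it is $0$ since each inner sum vanishes, while regrouping by edges gives $2\bigl(\sigma(BB)+\sigma(RR)-\sigma(RB)\bigr)$, immediately yielding $\sigma(RB)=|E(G)|/2$; this dispatches the bichromatic count quickly but still needs the per-color argument above to separate $\sigma(RR)$ from $\sigma(BB)$.
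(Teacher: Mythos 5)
Your proof is correct. Note that the paper states this result only as a citation to Freyberg and Marr's work on neighborhood balanced colorings and supplies no proof of its own, so there is nothing internal to compare against; your double count (summing the balance condition $b(v)=r(v)$ over the blue vertices to get $2\sigma(BB)=\sigma(RB)$, symmetrically $2\sigma(RR)=\sigma(RB)$, then using the edge partition $\sigma(BB)+\sigma(RR)+\sigma(RB)=|E(G)|$) is the standard and complete argument, and your bookkeeping of the factor of two for monochromatic versus bichromatic edges is exactly right.
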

	\begin{theorem}\cite{NBC}
		Suppose $G$ is a regular graph on $n$ vertices that admits a neighborhood balanced coloring $\sigma$. Then $\sigma(R)=\sigma(B)=\frac{n}{2}$.
	\end{theorem}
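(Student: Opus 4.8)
The plan is to exploit the $\pm 1$ encoding of $\sigma$ together with a handshaking-style double-counting argument, using regularity in an essential way. Let $r$ denote the common degree of $G$. Since every vertex is neighborhood balanced colored, $r$ is even and in particular $r>0$. The starting observation is that the balance condition at a vertex $v$ translates cleanly into the vanishing of $\sigma$ over the open neighborhood of $v$: if $v$ has $r/2$ red neighbors (each labeled $-1$) and $r/2$ blue neighbors (each labeled $+1$), then $\sum_{x\in N(v)}\sigma(x)=0$.

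First I would sum this local identity over every vertex of $G$, obtaining
\[
\sum_{v\in V(G)}\ \sum_{x\in N(v)}\sigma(x)=0.
\]
Next I would reorganize the double sum by counting, for each fixed vertex $x$, how often $\sigma(x)$ is tallied: it appears exactly once for each neighbor $v$ of $x$, i.e. $\deg(x)$ times. This is the step where regularity enters, since $\deg(x)=r$ for every $x$, and the double sum collapses to $r\sum_{x\in V(G)}\sigma(x)$. Combining the two expressions gives $r\sum_{x}\sigma(x)=0$, and because $r>0$ we may cancel it to conclude $\sum_{x\in V(G)}\sigma(x)=0$.

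Finally I would translate this back into vertex counts. Blue vertices contribute $+1$ and red vertices contribute $-1$, so the total sum equals $\sigma(B)-\sigma(R)$; hence $\sigma(B)=\sigma(R)$. Together with $\sigma(B)+\sigma(R)=n$ this yields $\sigma(R)=\sigma(B)=n/2$, as claimed.

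The argument is short, and I do not expect a genuine obstacle; the single point requiring care is the role of regularity. The local vanishing identity $\sum_{x\in N(v)}\sigma(x)=0$ holds for \emph{any} graph admitting an NBC, and summing it always produces the weighted relation $\sum_{x}\deg(x)\,\sigma(x)=0$. Without constant degree this weighted sum need not force $\sigma(B)=\sigma(R)$. Regularity is precisely what allows the common degree to factor out as a nonzero scalar, and this is the step I would be most careful to highlight explicitly in the write-up.
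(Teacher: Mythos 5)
Your argument is correct, and it is the natural double-counting proof: the local identity $\sum_{x\in N(v)}\sigma(x)=0$, summed over all vertices and reorganized by how often each $\sigma(x)$ is counted, gives $\sum_{x}\deg(x)\,\sigma(x)=0$, and regularity lets the common degree factor out. Note, however, that the paper does not prove this statement at all; it is quoted from Freyberg and Marr \cite{NBC}, so there is no in-paper proof to compare against. One small point of care: your claim that ``$r$ is even and in particular $r>0$'' is not a valid inference, since $r=0$ is even and an edgeless graph is vacuously neighborhood balanced (indeed $K_1$ admits an NBC, and the conclusion $\sigma(R)=\sigma(B)=n/2$ fails there for $n$ odd). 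You need the standing assumption that $G$ has no isolated vertices, which this paper does impose; with $r\geq 1$ and $r$ even you get $r\geq 2$, and the cancellation step goes through. Your closing remark correctly identifies regularity as the essential ingredient: without it the weighted relation $\sum_x \deg(x)\,\sigma(x)=0$ does not force $\sigma(B)=\sigma(R)$.
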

	
	\begin{theorem}\cite{NBC}
		If $G$ is a regular graph on $n$ vertices that admits a neighborhood balanced coloring, then $n$ is even and $|E(G)|\equiv 0 \pmod 4$.
	\end{theorem}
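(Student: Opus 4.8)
The plan is to observe that this statement is an immediate corollary of the two theorems stated just before it, and that both of its conclusions are forced purely by the integrality of vertex counts and edge counts. The entire mathematical content has already been packaged into those earlier results; what remains is only the elementary remark that the right-hand sides of their formulas must be integers.

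First I would recall that, since $G$ is a regular graph admitting a neighborhood balanced coloring $\sigma$, the preceding theorem yields $\sigma(R)=\sigma(B)=\frac{n}{2}$. Because $\sigma(R)$ is, by definition, the number of red vertices of $G$, it is a non-negative integer; hence $\frac{n}{2}$ must be an integer, which forces $n$ to be even. This disposes of the first assertion using only the regular-graph theorem.

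For the divisibility condition on the edges, I would invoke the theorem stating that any graph admitting an NBC satisfies $\sigma(RR)=\sigma(BB)=\frac{|E(G)|}{4}$. Since $\sigma(RR)$ counts the red-red edges, it too is a non-negative integer, so $\frac{|E(G)|}{4}$ must be an integer, giving $|E(G)|\equiv 0 \pmod 4$. Note that this second part uses only the existence of an NBC and does not even require regularity.

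I do not expect a genuine obstacle here, since the heavy lifting is done by the two quoted theorems. The only point worth verifying is that both earlier theorems are actually applicable under the present hypotheses: the edge-count theorem needs merely that $G$ admit an NBC, while the vertex-count theorem additionally demands that $G$ be regular. Both conditions are supplied by the statement, so each formula is available and the two integrality observations complete the argument.
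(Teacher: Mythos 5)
The paper states this result as a citation from Freyberg and Marr \cite{NBC} and gives no proof of its own, so there is nothing to compare against; your derivation is correct and complete. Deducing $n$ even from the integrality of $\sigma(R)=\frac{n}{2}$ (which needs regularity) and $|E(G)|\equiv 0 \pmod 4$ from the integrality of $\sigma(RR)=\frac{|E(G)|}{4}$ (which needs only the existence of an NBC) is exactly the intended reading of the two preceding theorems.
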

	\begin{theorem}\cite{NBC}
		The cycle $C_n$ admits a neighborhood balanced coloring if and only if $n\equiv 0\ \pmod 4$.
	\end{theorem}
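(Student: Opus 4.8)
The plan is to reduce the neighborhood balanced condition on $C_n$ to a simple recurrence on a $\pm 1$-valued sequence and then determine when that recurrence is consistent with the cyclic closure. First I would label the vertices $v_0, v_1, \dots, v_{n-1}$ around the cycle, with indices read modulo $n$, and recall that every vertex of $C_n$ has degree $2$. Hence a coloring $\sigma \colon V(C_n) \to \{1,-1\}$ is a neighborhood balanced coloring precisely when each vertex has one neighbor of each color, i.e. the two neighbors $v_{i-1}$ and $v_{i+1}$ of every vertex $v_i$ receive opposite signs. In the $\{1,-1\}$ notation this becomes the clean condition $\sigma(v_{i-1}) + \sigma(v_{i+1}) = 0$, equivalently $\sigma(v_{i+2}) = -\sigma(v_i)$ for every $i$.

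For the necessity direction I would trace the orbit of an index under the step-$2$ map $i \mapsto i+2 \pmod n$, since the recurrence forces the sign to flip at each such step. If $n$ is odd then $\gcd(2,n)=1$, so this orbit visits all $n$ vertices and returns to $v_0$ after $n$ sign flips, giving $\sigma(v_0) = (-1)^n \sigma(v_0) = -\sigma(v_0)$, which is impossible. If $n$ is even then the orbit of $v_0$ consists of the $n/2$ even-indexed vertices and closes up after $n/2$ steps, yielding $\sigma(v_0) = (-1)^{n/2}\sigma(v_0)$; this is consistent only when $n/2$ is even, that is $n \equiv 0 \pmod 4$. Thus both $n$ odd and $n \equiv 2 \pmod 4$ are ruled out.

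For sufficiency, when $n \equiv 0 \pmod 4$ I would exhibit the block coloring of period four given by $\sigma(v_i) = 1$ for $i \equiv 0,1 \pmod 4$ and $\sigma(v_i) = -1$ for $i \equiv 2,3 \pmod 4$. Because $4 \mid n$ this pattern wraps around the cycle consistently, and a direct check of the four residue classes confirms that each vertex sees exactly one blue and one red neighbor, so $\sigma$ is a neighborhood balanced coloring.

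I expect the only genuine subtlety to lie in the necessity argument: one must observe that the parity of the index is preserved under the step-$2$ map only when $n$ is even, which is exactly why the odd and even cases separate and why the sharper condition $4 \mid n$ emerges from the even case. Once the recurrence $\sigma(v_{i+2}) = -\sigma(v_i)$ and the orbit structure are set up carefully, the construction and its verification are routine.
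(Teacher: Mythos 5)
Your argument is correct and complete: the reduction of the balanced condition to $\sigma(v_{i+2})=-\sigma(v_i)$, the orbit analysis under the step-$2$ map (splitting on the parity of $n$ and then on the parity of $n/2$), and the period-four construction $+,+,-,-$ all check out. Note that the paper only cites this theorem from Freyberg and Marr without reproducing a proof, so there is no in-paper argument to compare against; your construction does coincide with the coloring of $C_8$ shown in Figure~1.
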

	\begin{theorem}\cite{NBC}
		The complete graph $K_n$ admits a neighborhood balanced coloring if and only if $n=1$.
	\end{theorem}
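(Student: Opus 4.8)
The plan is to prove both directions by a short direct counting argument; the statement is elementary enough that no heavy machinery is needed. For the (trivial) ``if'' direction, I would observe that $K_1$ is a single vertex with empty neighborhood, so the balance requirement $|\{\text{red neighbors}\}| = |\{\text{blue neighbors}\}|$ reads $0 = 0$ and holds vacuously; hence $K_1$ admits a neighborhood balanced coloring.

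For the ``only if'' direction I would argue by contradiction. Suppose $K_n$ admits a neighborhood balanced coloring $\sigma$ for some $n \geq 2$, and write $r = \sigma(R)$ and $b = \sigma(B)$, so that $r + b = n$. Because $K_n$ is complete, every vertex is adjacent to all of the other $n-1$ vertices. As a quick first observation I would record the parity obstruction already noted in the setup: a neighborhood balanced vertex has even degree, so $n-1$ is even and $n$ must be odd, giving $n \geq 3$.

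The heart of the argument is to count, from each color class, how many neighbors of each color a vertex sees. A red vertex is adjacent to every other vertex, hence to all $b$ blue vertices and to the remaining $r-1$ red vertices; balance at a red vertex therefore forces $r - 1 = b$. Symmetrically, a blue vertex is adjacent to all $r$ red vertices and to $b - 1$ other blue vertices, so balance forces $r = b - 1$. These two equations are incompatible, since together they assert $r = b+1$ and $r = b-1$ at once, a contradiction.

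The only point requiring care, and the one gap left by the subtractions $r-1$ and $b-1$, is the degenerate case in which one color class is empty. If $b = 0$ (all vertices red), then every vertex has $n-1$ red neighbors and $0$ blue neighbors, so balance forces $n - 1 = 0$, i.e.\ $n = 1$, contradicting $n \geq 2$; the symmetric case $r = 0$ is identical. Handling these monochromatic colorings separately is the one step to be explicit about, but it presents no real obstacle. Having ruled out both the two-color and the one-color possibilities for $n \geq 2$, I conclude that $K_n$ admits a neighborhood balanced coloring if and only if $n = 1$.
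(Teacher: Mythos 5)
Your proof is correct and complete: the degenerate monochromatic case is the only subtlety, and you handle it explicitly; the two balance equations $r-1=b$ and $r=b-1$ from the two color classes give an immediate contradiction for $n\geq 2$. Note, however, that the paper states this theorem only as a cited result from Freyberg and Marr \cite{NBC} and provides no proof of its own, so there is no in-paper argument to compare against; your counting argument is the natural and standard one for this fact.
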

	\begin{theorem}\cite{NBC}
		Let $p\geq 2$. The complete multipartite graph $G\equiv K_{n_1,n_2,\dots,n_p}$ admits a neighborhood balanced coloring if and only if $n_i\equiv 0\ \pmod 2$ for $i=1,2,\dots,p$. 
	\end{theorem}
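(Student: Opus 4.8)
The plan is to exploit the rigid neighborhood structure of a complete multipartite graph: if $V_1,\dots,V_p$ are the parts, then every vertex $v\in V_i$ is adjacent to precisely the vertices lying outside $V_i$, so $N(v)=V(G)\setminus V_i$. The key observation driving the whole argument is that the balance condition at $v$ therefore depends only on the part containing $v$, not on $v$ itself.

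First I would set up counting notation. For each part $V_i$ let $r_i$ and $b_i$ denote the number of red and blue vertices in $V_i$, so $r_i+b_i=n_i$, and write $R=\sum_{i=1}^{p}r_i$ and $B=\sum_{i=1}^{p}b_i$ for the total numbers of red and blue vertices, with $n=\sum_{i=1}^{p}n_i$. Since $v\in V_i$ sees every vertex outside its own part, the number of red neighbors of $v$ is $R-r_i$ and the number of blue neighbors is $B-b_i$.

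For the forward direction, assume $G$ admits a neighborhood balanced coloring. Then for each part the balance condition $R-r_i=B-b_i$ must hold, which rearranges to $r_i-b_i=R-B$ for every $i$. Writing $d=R-B$, summing the $p$ identities $r_i-b_i=d$ over $i$ yields $R-B=pd$, i.e.\ $d=pd$; since $p\geq 2$ this forces $d=0$, whence $r_i=b_i$ for every $i$ and therefore $n_i=2r_i$ is even. This uniform-collapse-then-sum step is the one subtle point, so I expect it to carry the weight of the argument.

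For the converse, suppose every $n_i$ is even. I would color each part $V_i$ with exactly $n_i/2$ red and $n_i/2$ blue vertices; then $r_i=b_i=n_i/2$ for all $i$ and $R=B=n/2$. Any vertex $v\in V_i$ now has $R-r_i=(n-n_i)/2$ red neighbors and equally many blue neighbors, so $v$ is neighborhood balanced, and since $v$ was arbitrary the coloring is a neighborhood balanced coloring. One checks en route that $\deg(v)=n-n_i$ is even, consistent with the stated fact that a balanced vertex must have even degree, so no hidden parity obstruction arises.
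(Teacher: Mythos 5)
Your proof is correct. Note that the paper does not actually prove this statement --- it is quoted from Freyberg and Marr \cite{NBC} without proof --- so there is no in-paper argument to compare against; your counting argument (the balance condition at $v\in V_i$ reads $R-r_i=B-b_i$, forcing $r_i-b_i=R-B$ uniformly, and summing over the $p\geq 2$ parts collapses $R-B$ to $0$) is the natural and standard route, and the converse construction with $n_i/2$ vertices of each color per part is exactly what one expects. The one point worth making explicit is where $p\geq 2$ enters: for $p=1$ the graph is edgeless and every coloring is vacuously balanced, and your step $d=pd\Rightarrow d=0$ is precisely where that hypothesis is consumed --- you handle this correctly.
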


	\begin{figure}[h]
		\centering
		\begin{tikzpicture}
			\draw[fill=black] (0,0) circle (3pt);
			\draw[fill=black] (1,1) circle (3pt);
			\draw[fill=black] (1,2) circle (3pt);
			\draw[fill=black] (1,3) circle (3pt);
			\draw[fill=black] (0,4) circle (3pt);
			\draw[fill=black] (-1,1) circle (3pt);
			\draw[fill=black] (-1,2) circle (3pt);
			\draw[fill=black] (-1,3) circle (3pt);
			
			\draw[thick] (0,0) -- (1,1) -- (1,2) -- (1,3) -- (0,4) -- (-1,3) -- (-1,2) -- (-1,1) -- (0,0); 
			
			\node at (0,-0.5) {$1$};
			\node at (1.5,1) {$1$};
			\node at (1.5,2) {$-1$};
			\node at (1.5,3) {$-1$};
			\node at (0,4.5) {$1$};
			\node at (-1.5,1) {$-1$};
			\node at (-1.5,2) {$-1$};
			\node at (-1.5,3) {$1$};

			\draw[fill=black] (5,0) circle (3pt);
			\draw[fill=black] (6,0) circle (3pt);
			\draw[fill=black] (7,0) circle (3pt);
			\draw[fill=black] (8,0) circle (3pt);
			\draw[fill=black] (4,3) circle (3pt);
			\draw[fill=black] (5,3) circle (3pt);
			\draw[fill=black] (6,3) circle (3pt);
			\draw[fill=black] (7,3) circle (3pt);
			\draw[fill=black] (8,3) circle (3pt);
			\draw[fill=black] (9,3) circle (3pt);
			
			\draw[thick] (8,0) -- (4,3);
			\draw[thick] (8,0) -- (5,3); 
			\draw[thick] (8,0) -- (6,3); 
			\draw[thick] (8,0) -- (7,3); 
			\draw[thick] (8,0) -- (8,3); 
			\draw[thick] (8,0) -- (9,3); 
			\draw[thick] (5,0) -- (4,3); 
			\draw[thick] (5,0) -- (5,3); 
			\draw[thick] (5,0) -- (6,3); 
			\draw[thick] (5,0) -- (7,3); 
			\draw[thick] (5,0) -- (8,3); 
			\draw[thick] (5,0) -- (9,3); 
			\draw[thick] (6,0) -- (4,3); 
			\draw[thick] (6,0) -- (5,3); 
			\draw[thick] (6,0) -- (6,3); 
			\draw[thick] (6,0) -- (7,3); 
			\draw[thick] (6,0) -- (8,3);
			\draw[thick] (6,0) -- (9,3); 
			\draw[thick] (7,0) -- (4,3); 
			\draw[thick] (7,0) -- (5,3); 
			\draw[thick] (7,0) -- (6,3); 
			\draw[thick] (7,0) -- (7,3); 
			\draw[thick] (7,0) -- (8,3); 
			\draw[thick] (7,0) -- (9,3);   
			
			\node at (5,-0.5) {$1$};
			\node at (6,-0.5) {$1$};
			\node at (7,-0.5) {$-1$};
			\node at (8,-0.5) {$-1$};
			\node at (4,3.5) {$1$};
			\node at (5,3.5) {$1$};
			\node at (6,3.5) {$1$};
			\node at (7,3.5) {$-1$};
			\node at (8,3.5) {$-1$};
			\node at (9,3.5) {$-1$};

		\end{tikzpicture}

		\caption{Neighborhood Balanced Coloring of $C_8$ and $K_{4,6}$.}
	\end{figure}
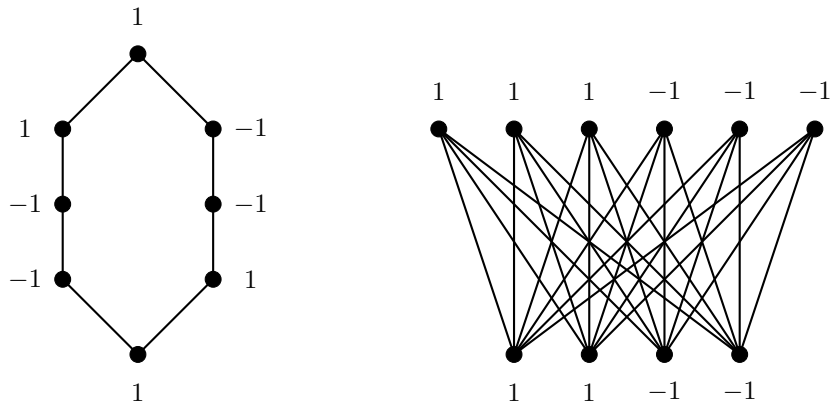
	
	\begin{theorem}\label{1,2-trees}
		Let $T$ be a tree of order $n\geq 3$. Then there exists a $2$-vertex coloring  of vertices of $T$ such that all non-leaf vertices are  neighborhood balanced colored if and only if all the non-leaf vertices have even degree.
		\begin{proof}
			Suppose there exists a $2$-vertex coloring of vertices of $T$ such that all non-leaf vertices are  neighborhood balanced colored. Then from the definition of  neighborhood balanced colored vertex, all the non-leaf vertices have even degrees. \\ Conversely, suppose all non-leaf vertices of a tree $T$ have even degrees. Then we have to produce a $2$-vertex coloring of vertices of $T$ such that all non-leaf vertices are  neighborhood-balanced colored.
			We prove the result using induction on $n$. For $n=3$, the only tree possible is a star $K_{1,2}$, and the required labeling is obtained by labeling one of its leaves as 1 and the other leaf as -1. The lone support vertex can be given any label between 1 and -1. Assume the result is true for all trees of order up to $n-1$. Consider a tree of order $n$. Select the maximal non-trivial path in $T$. Such a path will have two endpoints (leaves) say $x$ and $y$. Select one of its endpoints, say $x$.\\
			Case 1: If $x$ is the only vertex on its support vertex $v$.\\
			Remove the leaf $x$ and consider the new tree obtained. Call it $T-x$. Now the vertex $v$ will be a leaf in $T-x$. So $T-x$ is a tree of order $n-1$, having all non-leaf vertices of even degree. By the induction hypothesis, the tree $T-x$ has a $2$-vertex coloring such that all non-leaf vertices are  neighborhood balanced colored. Now attach the leaf $x$ to $v$ by choosing a label complementary to the label of the neighbor of $v$ in tree $T-x$. Now the vertex $v$ is a non-leaf vertex in $T$, and it is  neighborhood balanced colored.\\
			Case 2: If the support vertex $v$ has more than one leaf.\\
			In this case, the vertex $v$ has an odd number of neighbors besides $x$. Remove $x$ along with another leaf attached to $v$, say $y$. The resulting structure is a tree of order $n-2$. Call this new tree as $T-\{x,y\}$. All the non-leaf vertices of this tree are of even degree. So by induction hypothesis the tree $T-\{x,y\}$ has a $2$-vertex coloring such that all non-leaf vertices are  neighborhood balanced colored. Now attach the leaves $x$ and $y$ to vertex $v$ by assigning label 1 to leaf $x$ and label -1 to leaf $y$. This assignment ensures that the vertex $v$ is  neighborhood balanced colored.\\
			Thus in both cases, we have provided a $2$-vertex coloring of vertices of $T$ such that all non-leaf vertices are  neighborhood balanced colored. This completes the proof. 
		\end{proof}
	\end{theorem}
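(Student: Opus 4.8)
The plan is to prove the two directions separately, with the forward implication being essentially immediate and the reverse requiring a constructive induction.

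For necessity, suppose a $2$-coloring makes every non-leaf vertex neighborhood balanced. By definition a neighborhood balanced vertex has equally many red and blue neighbors, hence an even number of neighbors; so every non-leaf vertex has even degree. This direction needs no further work. For sufficiency I would argue by strong induction on the order $n$, taking the star $K_{1,2}\cong P_3$ as the base case, where coloring the two leaves $1$ and $-1$ balances the center regardless of its own label. For the inductive step, the device I would use is to peel leaves off an endpoint of a longest (maximal non-trivial) path $x = x_0, x_1 = v, x_2, \dots$. The crucial structural observation is that for such an endpoint $x$, its support vertex $v$ has exactly one non-leaf neighbor, namely $x_2$: any other non-leaf neighbor would extend the path and contradict maximality. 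Consequently, if $\deg(v) = d$ then $v$ carries exactly $d-1$ leaves, and the hypothesis that $d$ is even forces $v$ to have an odd number of leaves. This dichotomy ``one leaf versus at least three leaves'' is precisely the case split in the statement.

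In the first case ($v$ has the single leaf $x$, so $\deg(v)=2$), I would delete $x$; then $v$ drops to degree $1$ and becomes a leaf, while every other vertex keeps its degree, so $T-x$ is a tree of order $n-1$ still meeting the even-degree hypothesis. By induction $T-x$ has the desired coloring; I would keep it and color $x$ with the label complementary to that of $x_2$. Since $x_2$'s neighborhood, and the colors on it, is unchanged by deleting and re-inserting $x$, the vertex $x_2$ stays balanced, and $v$ now sees one vertex of each color. In the second case ($v$ has at least three leaves), I would delete two of them, $x$ and $y$; this lowers $\deg(v)$ by $2$, keeping it even and keeping $v$ a non-leaf, so $T-\{x,y\}$ again satisfies the hypothesis. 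Applying induction and then recoloring $x$ with $1$ and $y$ with $-1$ adds one neighbor of each color to $v$, preserving its balance while touching no other vertex's neighborhood.

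The step I expect to need the most care is verifying that each reduction lands on a tree to which the induction hypothesis genuinely applies. I must confirm, first, that deleting leaves leaves the even-degree condition intact at every surviving non-leaf vertex, which is exactly why the longest-path analysis pinning down $v$'s unique non-leaf neighbor is essential; and, second, that the orders do not drop below the base case in a way that breaks the argument. Here one checks that the single-leaf case only reduces from $n\geq 4$ to $n-1\geq 3$, and that the multi-leaf case only arises when $\deg(v)\geq 4$, i.e. $n\geq 5$, so that it reduces to $n-2\geq 3$. Once these bookkeeping points are settled, the re-attachment always restores balance at $v$ without disturbing any previously balanced vertex, which closes the induction.
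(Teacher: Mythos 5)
Your proposal is correct and follows essentially the same route as the paper's own proof: the same induction on $n$ with base case $K_{1,2}$, the same device of peeling one or two leaves off an endpoint of a maximal path, and the same recolorings on reattachment. The only differences are cosmetic --- you supply slightly more justification for why the one-leaf/many-leaves dichotomy is exhaustive and why the reductions stay above the base case (your claim that $v$ has \emph{exactly} one non-leaf neighbor fails when $T$ is a star, but that case still lands in your second case and the argument goes through unchanged).
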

     In this paper, we study the local distance antimagic chromatic number for the union, the lexicographic product, and the direct product of neighborhood balanced colored graphs and check if the results of the chromatic number of a graph hold good for local distance antimagic chromatic number also. We assume that all general graphs $G$ considered in the paper admit a local distance labeling, with local distance antimagic chromatic number $\chi_{ld}(G)$.  
	\section{$\chi_{ld}$ of some Graphs}
    In this section, we study the local distance antimagic labeling of some specific graphs and find their local distance antimagic chromatic number. 
	\begin{theorem}\label{copiesofonevertexc4}
			For positive integer $t$, $\chi_{ld}(C_4^{(t)})=t+1.$
			\begin{proof}
				Let $V(C_4^{(t)})=\{u,x_i,y_i,z_i\ \}$ be the vertex set and  $E(C_4^{(t)})=\{ux_i,uy_i,x_iz_i,y_iz_i\ \}$ be the edge set of $C_4^t$, respectively, where $i=1,2,\dots,t$. We define a local distance antimagic labeling for vertices of $C_4^{(t)}$ by $f(u)=3t-2$, $f(z_i)=3(i-1)+1,\; 1 \leq i \leq t-1$, $f(z_t)=3t+1$, $f(x_1)=3t$, $f(y_1)=3t-1$. In this way, we have used the vertex labels $1,4,7,10, \dots, 3t-2, 3t-1, 3t, 3t+1$. The remaining labels are $2,3,5,6,\dots,3t-4,3t-3$. We can label the pair of vertices $(x_i, y_i)$ from the remaining labels in such a way that the sum $f(x_i) + f(y_i) = 3t-1$ for all $i= 2,3 \dots, t $.\\
				Now the weight $w(z_1)= f(x_1) + f(y_1) = 6t-1$, $w(z_i)=f(x_i) + f(y_i) = 3t-1$ for $i=2,3,\dots,t$, $w(x_t) = w(y_t)= f(u) + f(z_t)= 6t-1$ and $w(x_i) = w(y_i) =  f(u) + f(z-i) = 3t-2+ 3i-2 = 3(t+i)-4$ for $i=2,3,\dots,t-1$ and $w(x_1) = w(y_1) = f(u) + f(z_1) = 3t-2+1 = 3t-1$. Also $w(u) = w(z_1) +\sum_{i=2}^t w(z_i) =6t-1+ (3t-1)(t-1) = t(3t+2)$. Thus, we get $t+1$ distinct weights and hence  $\chi_{ld}(C_4^{(t)})\leq t+1$.\\ 
				We now turn to prove the reverse inequality. Let $f$ be a local distance antimagic labeling of $C_4^{(t)}$. Let the vertex $u$ receive the first weight, i.e. $w(u)=w_1$. As $N(x_i)=N(y_i)=\{u,z_i\},$ the vertices $x_i$ and $y_i$ receive $t$ distinct weights which are distinct from $w_1$. Now as $N(z_i)=\{x_i,y_i\}$, the weight of $z_i$ can be equal to the weight of $x_j$, for some $j\not=i$. Therefore we require at least $t+1$ weights and hence  $\chi_{ld}(C_4^{(t)})\geq t+1.$ 
			\end{proof}
	\end{theorem}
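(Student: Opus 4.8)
The plan is to prove the two inequalities $\chi_{ld}(C_4^{(t)})\le t+1$ and $\chi_{ld}(C_4^{(t)})\ge t+1$ separately. First I would record the structure explicitly: $C_4^{(t)}$ is the one-point union of $t$ four-cycles $u\,x_i\,z_i\,y_i\,u$ glued at the common vertex $u$, so that $N(u)=\{x_i,y_i : 1\le i\le t\}$, $N(x_i)=N(y_i)=\{u,z_i\}$, and $N(z_i)=\{x_i,y_i\}$. The order is $n=3t+1$, and the labels range over $\{1,\dots,3t+1\}$. The crucial feature to exploit is that, within each copy, $x_i$ and $y_i$ have identical open neighborhoods.

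For the lower bound, this twin feature does almost all the work. Since $N(x_i)=N(y_i)$, every labeling forces $w(x_i)=w(y_i)=f(u)+f(z_i)$. Because $f$ is a bijection, the labels $f(z_1),\dots,f(z_t)$ are pairwise distinct, so the $t$ weights $w(x_1),\dots,w(x_t)$ are pairwise distinct. Moreover $u$ is adjacent to every $x_i$, so the antimagic condition gives $w(u)\ne w(x_i)$ for each $i$. Hence the $t+1$ colors $w(u),w(x_1),\dots,w(x_t)$ are all distinct, yielding $\chi_{ld}(C_4^{(t)})\ge t+1$. I would also note that the weights $w(z_i)=f(x_i)+f(y_i)$ need not contribute new colors, which is precisely what leaves room for the bound $t+1$ to be tight.

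For the upper bound I would exhibit an explicit bijection $f\colon V\to\{1,\dots,3t+1\}$ whose induced coloring uses only $t+1$ colors. The design idea is to control the $z_i$-labels and the pair sums $f(x_i)+f(y_i)$ so that weights collapse into classes: arrange almost all pair sums to equal one common value (so the corresponding $w(z_i)$ coincide), choose the $f(z_i)$ so that the $u$-side weights $f(u)+f(z_i)$ merge into as few distinct values as possible, and reserve one designated copy to absorb the leftover large labels. I would then check the coloring is proper edge by edge — in particular that $w(x_i)\ne w(z_i)$ on each spoke $x_iz_i$, and that $w(u)$, being a sum over all $2t$ neighbors of $u$, exceeds and hence differs from every other weight.

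The main obstacle will be the upper-bound construction: one must simultaneously respect the bijection constraint, force enough weight coincidences to keep the color count down to $t+1$, and avoid any accidental coincidence that would break the antimagic (proper-coloring) condition on the edges $ux_i$, $uy_i$, $x_iz_i$, $y_iz_i$. Small values of $t$ should be treated directly; in particular $t=1$ gives $C_4^{(1)}=C_4$ and the claim reduces to $\chi_{ld}(C_4)=2$, consistent with Theorem \ref{cycle}. Finally, the generic assertion that $w(u)$ dominates all remaining weights will require only a short inequality estimate rather than a case analysis.
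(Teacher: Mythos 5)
Your lower bound is complete and correct, and it is essentially the paper's own argument with the missing justification filled in: since $N(x_i)=N(y_i)=\{u,z_i\}$, every labeling forces $w(x_i)=w(y_i)=f(u)+f(z_i)$; these $t$ values are pairwise distinct because $f$ is injective on $\{z_1,\dots,z_t\}$, and each differs from $w(u)$ because $u$ is adjacent to $x_i$. That yields $t+1$ mandatory colors, stated more carefully than in the paper itself.

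The gap is in the upper bound: you describe the right design (force most pair sums $f(x_i)+f(y_i)$ to a common value so the $w(z_i)$ coincide, and arrange for the $z$-weights to be absorbed into values already taken by the $x$-weights), but you never produce the bijection, and producing and verifying it is the substantive content of this half. There is also a conceptual slip: you propose to ``choose the $f(z_i)$ so that the $u$-side weights $f(u)+f(z_i)$ merge into as few distinct values as possible,'' but these can never merge --- they are automatically $t$ distinct values, which is exactly why the lower bound holds. The only available freedom is to make the $t$ values $w(z_i)=f(x_i)+f(y_i)$ land on values already occupied by the $w(x_j)$ and by each other. The paper does this explicitly: $f(u)=3t-2$, $f(z_i)=3i-2$ for $i\le t-1$, $f(z_t)=3t+1$, $f(x_1)=3t$, $f(y_1)=3t-1$, with the remaining labels $2,3,5,6,\dots,3t-4,3t-3$ split into $t-1$ pairs each summing to $3t-1$; then $w(z_i)=3t-1=w(x_1)$ for $i\ge 2$ and $w(z_1)=6t-1=w(x_t)$, so the $z$-vertices contribute no new colors and exactly $t+1$ colors occur. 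Exhibiting such a pairing and checking properness on every edge is short but must actually be done; until then $\chi_{ld}(C_4^{(t)})\le t+1$ is not established. (Your remark that $t=1$ needs separate treatment is well taken --- the generic construction degenerates there and one falls back on $\chi_{ld}(C_4)=2$ from Theorem \ref{cycle}.)
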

	\begin{theorem}
			For a positive integer $p$ and a graph $G$ of order $n$ having no pendant vertex, $\chi(G)+n\leq \chi_{ld}(G\circ \overline{K_{2p}})\leq \chi_{ld}(G)+n$.
	\end{theorem}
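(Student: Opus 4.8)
The graph $G\circ\overline{K_{2p}}$ is the corona of $G$ with $\overline{K_{2p}}$: it is obtained from $G$ by attaching $2p$ pendant vertices (leaves) to each vertex of $G$, so its order is $N=n(2p+1)$. Since $G$ has no pendant vertex, every vertex of $G$ has degree at least two, so the leaves of $G\circ\overline{K_{2p}}$ are exactly the $2np$ added vertices and their supports are exactly the vertices of $G$. I would start by recording the two weight types under a labelling $f$: a leaf $\ell$ attached to $u$ has $N(\ell)=\{u\}$, so $w(\ell)=f(u)$; and a vertex $u\in V(G)$ has $w(u)=\sum_{v\in N_G(u)}f(v)+\sum_{\ell\in L_u}f(\ell)$, where $L_u$ denotes the set of leaves at $u$. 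Two consequences drive both bounds: the leaves realise exactly the $n$ distinct colours $\{f(u):u\in V(G)\}$, and the restriction of the induced colouring to $V(G)$ is a proper colouring of $G$ (adjacency in the corona between vertices of $G$ is precisely adjacency in $G$), hence uses at least $\chi(G)$ colours.

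For the upper bound the plan is to take a local distance antimagic labelling $g\colon V(G)\to\{1,\dots,n\}$ realising $\chi_{ld}(G)$ colours and transplant it onto the supports by setting $f(u)=g(u)$. The remaining labels $\{n+1,\dots,N\}$ form $2np$ consecutive integers, which I would split into $np$ pairs $\{n+k,\,N+1-k\}$ of common sum $2n+2np+1$ and hand $p$ such pairs to each leaf set $L_u$; here the evenness of $2p$ is exactly what makes the balanced distribution possible. Then every leaf set has the same sum $C=p(2n+2np+1)$, so $w(u)=w_g(u)+C$ for all $u\in V(G)$. Thus the vertices of $G$ realise exactly $\chi_{ld}(G)$ colours, all at least $C>n$, while the leaves realise the $n$ colours $\{1,\dots,n\}$; the two palettes are disjoint and the labelling is local distance antimagic, since adjacent vertices of $G$ keep distinct weights (because $g$ does) and each support outweighs its own leaves. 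This yields $\chi_{ld}(G\circ\overline{K_{2p}})\le \chi_{ld}(G)+n$.

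For the lower bound I already have $n$ leaf colours and at least $\chi(G)$ colours on $V(G)$, so the whole task is to show that at least $\chi(G)$ of the vertex colours are \emph{fresh}, i.e.\ do not coincide with any leaf colour $f(u')$. The basic tool is a monotonicity observation: since $u$ has at least one further neighbour besides any fixed $v\in N_G(u)$, one has $w(u)>f(v)$, so a vertex whose weight equals a leaf colour $f(u')$ must borrow it from a non-neighbour $u'\neq u$; in particular the vertex of maximum weight always carries a fresh colour. To upgrade this single fresh colour to $\chi(G)$ of them, I would recolour the vertices carrying repeated (leaf-)colours by fresh ones, essentially merging each class $S_\beta=\{u:w(u)=\beta=f(u')\}$ into a class of strictly larger weight, so that $G$ gets properly coloured using only fresh colours and their number is forced to be at least $\chi(G)$. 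As the fresh colours are disjoint from the $n$ leaf colours by definition, this gives $\chi_{ld}(G\circ\overline{K_{2p}})\ge \chi(G)+n$.

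The main obstacle is exactly this recolouring step. A priori a vertex weight may equal a leaf colour—one can exhibit valid labellings where $w(u)=f(u')$ for a non-neighbour $u'$—so the two palettes need not be disjoint and the merging must be shown to terminate, with no cyclic chain $w(u_0)=f(u_1),\dots,w(u_{k-1})=f(u_0)$ surviving. I expect to rule such chains out by comparing $\sum_i w(u_i)$ with $\sum_i f(u_i)$, using that each $w(u_i)$ is a sum of at least $2p+2$ distinct labels, among them the $2p$ leaf labels of $u_i$, so that in aggregate the weights outrun the single labels $f(u_i)$; making this counting robust enough to apply to every admissible $G$ is the delicate heart of the argument.
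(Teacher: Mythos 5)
Your upper bound is correct and is essentially the paper's own construction: keep an optimal local distance antimagic labelling of $G$ on the support vertices and distribute the labels $n+1,\dots,(2p+1)n$ to the leaves in pairs of constant sum, $p$ pairs per support, so that every leaf-set has the same total $C$; then each support weight is the old weight shifted by $C$ and each leaf weight is the label of its support, giving at most $\chi_{ld}(G)+n$ colours. (The paper's explicit assignment $g(x_i^j)=in+j$ for odd $i$ and $g(x_i^j)=(i+1)n+1-j$ for even $i$ realises exactly your pairing, with the same constant $2p(p+1)n+p$.) Your additional check that the support weights all exceed $n$, so that the two palettes are disjoint in this particular labelling, is the same observation the paper makes.

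The lower bound is where your proposal is incomplete, and you have correctly located the difficulty. Writing $W$ for the set of support weights and $F=\{f(u):u\in V(G)\}$ for the set of leaf weights, what is needed is $|W\setminus F|\ge\chi(G)$, and $|W|\ge\chi(G)$ alone does not give this unless one controls $W\cap F$. Your plan to ``merge each class $S_\beta$ into a class of strictly larger weight'' is only a plan: merging colour classes of a proper colouring need not produce a proper colouring, and the aggregate comparison of $\sum_i w(u_i)$ with $\sum_i f(u_i)$ that you hope will exclude cyclic chains is not carried out. Your suspicion that the palettes can genuinely overlap is well founded: in $C_4\circ\overline{K_2}$, taking $f(v_1,v_2,v_3,v_4)=(11,2,12,3)$ and giving $v_1$ the leaf labels $1$ and $6$ (and $v_2,v_3,v_4$ the pairs $4,5$ and $7,8$ and $9,10$) yields a valid local distance antimagic labelling with $w(v_1)=2+3+1+6=12=f(v_3)$, so $W\cap F\ne\emptyset$. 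For comparison, the paper disposes of this point in one sentence, asserting that because every vertex of $G$ has degree at least two the $\chi(G)$ support weights are automatically distinct from the $n$ leaf weights; the example above shows that this assertion, read as a statement about an arbitrary labelling, fails, so the paper does not supply the missing step either. As it stands, neither your argument nor the paper's establishes $|W\setminus F|\ge\chi(G)$, and closing this gap would require actually executing (or replacing) the counting argument you only gesture at.
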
	
	\begin{proof}
		Let $V(G)=\{v_1,v_2,\dots,v_n\}$ and $V(\overline{K_{2p}})=\{x_1,x_2,\dots,x_{2p}\}$ be the vertex sets of $G$ and $\overline{K_{2p}}$ respectively. Let $f$ be the local distance antimagic labeling of vertices of $G$ that assigns $\chi_{ld}(G)$ distinct weights to vertices of $G$. For $j=1,2,\dots,n$, let $x_i^j$ be the vertices of $G\circ \overline{K_{2p}}$ that correspond with vertices $x_i$ for $i=1,2,\dots,2p$, in the $j^{th}$ copy of $\overline{K_{2p}}$ due to vertex $v_j$. Define a bijection $g\colon V(G\circ \overline{K_{2p}})\rightarrow \{1,2,\dots,n, n+1,\dots, 2pn+n\}$ by $g(v_i)=f(v_i)$ and 
		\begin{equation*}
			g(x_i^j)=
			\begin{cases}
				in+j &\text{ if $i=1,3,5,\dots,2p-1$ and $j=1,2,\dots,n$},\\
				(i+1)n+1-j &\text{ if $i=2,4,6,\dots,2p$ and $j=1,2,\dots,n$}.
			\end{cases}
		\end{equation*} 
		For the weight of vertices, we have, for $i=1,2,\dots,2p,$
		$$w_g(x_i^j)=g(v_j)=f(v_j)\ \text{where $j=1,2,\dots,n$},$$ $$w_g(v_j)=\sum_{i=1}^{2p}g(x_i^j) + w_f(v_j)     =\frac{(2p+1)(2pn+n+1)-(n+1)}{2}+w_f(v_j).$$
	Note that for fixed $j$, where $1\leq j \leq n$, $w_g(x_i^j)\neq w_g(v_j)$ for any $1\leq i \leq 2p$ , and since $f$ is a local distance antimagic labeling of $G$, $g$ is a local distance antimagic labeling of $G\circ \overline{K_{2p}}$ that assigns at most $\chi_{ld}(G)+n$ distinct weights.\\
		To obtain the lower bound, note that all the pendant vertices of $G\circ\overline{K_{2p}}$ receive $n$ distinct weights, which are labels of their support vertices. The vertices of the subgraph  $G$ in $G\circ \overline{K_{2p}}$ require at least $\chi(G)$ distinct weights. As the degree of each vertex of $G$ is at least two, these $\chi(G)$ weights are distinct from the $n$ weights of the support vertices. Therefore $\chi(G)+n\leq \chi_{ld}(G\circ \overline{K_{2p}})$.  
	\end{proof}
	\begin{cor}
			For positive integer $p$ and a graph $G$ of order $n$ having no pendant vertex with $\chi_{ld}(G)=\chi(G)$, we have $\chi_{ld}(G\circ \overline{K_{2p}})=\chi(G)+n$.
	\end{cor}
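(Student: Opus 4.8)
The plan is to obtain this corollary immediately from the preceding theorem by a squeeze argument, with no additional construction required. Since $p$ is a positive integer and $G$ is a graph of order $n$ with no pendant vertex, the hypotheses of the theorem are met exactly, so it applies to $G \circ \overline{K_{2p}}$ and yields the two-sided bound
\[
\chi(G)+n \;\leq\; \chi_{ld}\!\left(G\circ \overline{K_{2p}}\right) \;\leq\; \chi_{ld}(G)+n .
\]

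Next I would invoke the extra hypothesis $\chi_{ld}(G)=\chi(G)$, substituting it into the right-hand side so that the upper bound reads $\chi(G)+n$. The lower and upper bounds then agree at the single value $\chi(G)+n$, which forces $\chi_{ld}(G\circ \overline{K_{2p}})=\chi(G)+n$ and completes the argument. There is no genuine obstacle here: all the real work is already carried by the theorem, and the sole role of the hypothesis $\chi_{ld}(G)=\chi(G)$ is to collapse the interval $[\chi(G)+n,\ \chi_{ld}(G)+n]$ to a single point.

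To confirm that the statement is not vacuous, I would finally remark that base graphs with $\chi_{ld}=\chi$ and no pendant vertices do exist. For instance, by Theorem~\ref{multipartite}, a complete multipartite graph $K_{n_1,\dots,n_r}$ whose parts are large enough to avoid degree-one vertices satisfies $\chi_{ld}=\chi=r$, so the corollary has genuine content for such graphs $G$ and identifies $\chi_{ld}(G\circ \overline{K_{2p}})$ exactly.
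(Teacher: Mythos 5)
Your proposal is correct and is exactly the intended argument: the paper presents this corollary as an immediate consequence of the preceding theorem, obtained by substituting $\chi_{ld}(G)=\chi(G)$ into the upper bound so that the two-sided inequality collapses to the single value $\chi(G)+n$. The remark on non-vacuousness is a nice extra but not needed.
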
 
	\begin{cor}
			For positive integer $p$, we have $\chi_{ld}(C_4\circ \overline{K_{2p}})=6$.
	\end{cor}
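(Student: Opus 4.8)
The plan is to obtain this as a direct specialization of the immediately preceding corollary, applied to the base graph $G=C_4$. That corollary asserts that for a positive integer $p$ and a graph $G$ of order $n$ having no pendant vertex with $\chi_{ld}(G)=\chi(G)$, one has $\chi_{ld}(G\circ\overline{K_{2p}})=\chi(G)+n$. So the entire task reduces to checking that $C_4$ meets these three hypotheses and then substituting the relevant numerical values.

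First I would record that $C_4$ has order $n=4$ and is $2$-regular, so in particular it has no pendant vertex; this disposes of the ``no pendant vertex'' condition. Next I would observe that $C_4$ is an even cycle, hence bipartite, giving $\chi(C_4)=2$. For the remaining and only substantive hypothesis, namely $\chi_{ld}(C_4)=\chi(C_4)$, I would invoke Theorem~\ref{cycle}, which records $\chi_{ld}(C_4)=2$. Combining these yields $\chi_{ld}(C_4)=2=\chi(C_4)$, so the hypothesis $\chi_{ld}(G)=\chi(G)$ holds with $G=C_4$.

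With all hypotheses verified, the preceding corollary applies and gives $\chi_{ld}(C_4\circ\overline{K_{2p}})=\chi(C_4)+n=2+4=6$, as required. The only point that deserves any care is confirming the equality $\chi_{ld}(C_4)=\chi(C_4)$ rather than merely the inequality $\chi_{ld}(C_4)\geq\chi(C_4)$ that holds in general; this is exactly what Theorem~\ref{cycle} supplies, so there is no genuine obstacle and no independent construction is needed beyond citing that earlier result.
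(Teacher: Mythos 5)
Your proposal is correct and follows exactly the route the paper intends: the corollary is stated without proof as an immediate specialization of the preceding corollary, using $G=C_4$ with $n=4$, no pendant vertices, $\chi(C_4)=2$, and $\chi_{ld}(C_4)=2$ from Theorem~\ref{cycle}, giving $2+4=6$. Your verification of the hypothesis $\chi_{ld}(C_4)=\chi(C_4)$ is the only substantive step and you handle it correctly.
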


	\section{$\chi_{ld}$ of Copies of Graphs}
	For a graph $G$ and a positive integer $m\geq 1$, let $mG$ denote the graph formed by taking $m$ copies of graph $G$.
	It is known that $\chi(mG)=\chi(G)$, where $\chi$ denotes the chromatic number of $G$. We now investigate to look out for some classes of graph $G$ for which $\chi_{ld}(mG)=\chi_{ld}(G)$. The following result states that for graphs admitting neighborhood balanced coloring, $\chi_{ld}(mG)\leq \chi_{ld}(G)$.  
	\begin{theorem}\label{copiesthm1}
		Let $m$ be a positive integer and  $G$ be a graph that admits neighborhood balanced coloring. Let $f$ be a local distance antimagic labeling of $G$ that uses $p$ distinct colors. Let for every edge $uv\in E(G)$, we have,
		\begin{equation}\label{copiesequation1}
			m\ w_{f}(v)+\tfrac{(1-m)}{2}\deg(v)\not=	m\ w_{f}(u)+\tfrac{(1-m)}{2}\deg(u),
		\end{equation} 
		then $\chi_{ld}(mG)\leq p$.
	\end{theorem}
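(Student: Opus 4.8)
The plan is to construct an explicit local distance antimagic labeling of $mG$ from the given labeling $f$ of $G$, using the neighborhood balanced coloring $\sigma$ to control how weights shift from copy to copy. The key idea is that since $G$ admits a neighborhood balanced coloring, each vertex has equal numbers of red and blue neighbors, so I can use the $\{1,-1\}$ coloring to add and subtract label-offsets in a balanced way across the $m$ copies.

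Let $\sigma\colon V(G)\to\{1,-1\}$ be the neighborhood balanced coloring and let $f$ use labels $\{1,2,\dots,n\}$ where $n=|V(G)|$. Denote the $k$-th copy of $G$ by $G_k$ for $k=1,\dots,m$, and let $v^{(k)}$ be the vertex in $G_k$ corresponding to $v\in V(G)$. First I would define a labeling $g\colon V(mG)\to\{1,\dots,mn\}$ by setting $g(v^{(k)})=f(v)+n\cdot h(k,\sigma(v))$, where $h$ is chosen so that $g$ is a bijection onto $\{1,\dots,mn\}$. The natural choice is to shift blue vertices ($\sigma(v)=1$) upward and red vertices ($\sigma(v)=-1$) downward through the copies, for instance $g(v^{(k)})=f(v)+(k-1)n$ with the copies ordered so the offsets interleave correctly; the precise formula should be arranged so that across the $m$ copies each residue class modulo $n$ receives each of the values $f(v), f(v)+n, \dots, f(v)+(m-1)n$ exactly once.

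Next I would compute $w_g(v^{(k)})=\sum_{x\in N(v)} g(x^{(k)})$. Because $v$ has equally many red and blue neighbors (neighborhood balanced), the contributions $+n\cdot h$ and $-n\cdot h$ from the copy-offsets partially cancel, and the leftover is a clean linear function of $k$. Carrying out this sum, I expect $w_g(v^{(k)}) = w_f(v) + n\cdot(\text{offset depending on } k) + \tfrac{\deg(v)}{2}\cdot(\text{correction})$, and after simplification the weight of $v^{(k)}$ should take the form appearing in \eqref{copiesequation1}, namely something proportional to $m\,w_f(v)+\tfrac{(1-m)}{2}\deg(v)$ up to an affine reparametrization in $k$. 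The point of hypothesis \eqref{copiesequation1} is exactly to guarantee that, after this cancellation, adjacent vertices $u^{(k)}$ and $v^{(k)}$ within a single copy still receive distinct weights, while the balanced structure forces corresponding vertices across different copies to reuse the same $p$ color classes rather than introduce new ones.

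The final step is to verify the coloring uses at most $p$ colors: I would argue that the weight of $v^{(k)}$ depends on $v$ only through $w_f(v)$ and $\deg(v)$ (together with the global copy-parameter $k$), so the color classes of $mG$ are in bijection with those of $G$ under $f$, giving $\chi_{ld}(mG)\le p$. \textbf{The main obstacle} I anticipate is twofold: first, choosing the offset function $h(k,\sigma(v))$ so that $g$ is genuinely a bijection onto $\{1,\dots,mn\}$ while respecting the red/blue split (this requires $\sigma(R)$ and $\sigma(B)$ to distribute labels compatibly, which the balanced coloring theorems quoted earlier help control); and second, checking that the weight formula collapses precisely to the expression in \eqref{copiesequation1} so that the hypothesis is exactly what is needed to separate adjacent weights — the algebra of tracking the $\pm n$ offsets through $\sum_{x\in N(v)}$ and confirming the $\tfrac{(1-m)}{2}\deg(v)$ term emerges is where the real care is required.
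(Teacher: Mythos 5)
Your overall strategy is the same as the paper's: use the neighborhood balanced coloring to assign each vertex of $G$ a block of $m$ labels distributed over its $m$ copies, traversed in opposite directions for the two color classes, so that the $\pm$ offsets cancel in every weight sum and corresponding vertices across copies share a weight. However, the concrete labeling scheme you commit to does not match the hypothesis you are given, and this is a genuine gap. You propose $g(v^{(k)})=f(v)+n\cdot h(k,\sigma(v))$ with $h(\cdot,1)$ and $h(\cdot,-1)$ permutations of $\{0,1,\dots,m-1\}$, i.e.\ each vertex keeps its residue class modulo $n$ and the $m$ copies sweep through $f(v),f(v)+n,\dots,f(v)+(m-1)n$. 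This is a valid bijection, and the cancellation does occur: with, say, $h(k,1)=k-1$ and $h(k,-1)=m-k$ one gets
\begin{equation*}
w_g(v^{(k)})=w_f(v)+n\bigl((k-1)+(m-k)\bigr)\tfrac{\deg(v)}{2}=w_f(v)+\tfrac{n(m-1)}{2}\deg(v),
\end{equation*}
independent of $k$. But this expression is \emph{not} an affine reparametrization of $m\,w_f(v)+\tfrac{(1-m)}{2}\deg(v)$: the coefficient of $w_f(v)$ is $1$ rather than $m$, and the degree term has the opposite sign and a factor of $n$. So hypothesis \eqref{copiesequation1} does not guarantee that adjacent vertices get distinct weights under your labeling (the two conditions agree only when $\deg(u)=\deg(v)$, e.g.\ for regular graphs), and your proof of the stated theorem does not close.

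The fix is to distribute labels by \emph{vertex blocks} rather than by residue classes: set $g(v^{(i)})=m(f(v)-1)+i$ when $\sigma(v)=1$ and $g(v^{(i)})=mf(v)+1-i$ when $\sigma(v)=-1$, so the $m$ copies of the vertex labeled $t$ receive the consecutive integers $m(t-1)+1,\dots,mt$. Then the weight computation gives $w_g(u^{(i)})=m\,w_f(u)+(i-m)n^{1}_{\sigma}(u)+(1-i)n^{2}_{\sigma}(u)=m\,w_f(u)+\tfrac{(1-m)}{2}\deg(u)$, which is exactly the quantity appearing in \eqref{copiesequation1}; the hypothesis then separates adjacent weights and the count of $p$ colors follows as you describe. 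Your bijectivity concern is also resolved automatically by this scheme, with no constraint needed on $\sigma(R)$ versus $\sigma(B)$.
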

	\begin{proof}
		Let $h$ be a neighborhood balanced coloring of $G$. This means for every vertex $v\in V(G)$, $n_{h}^{1}(v)=n_{h}^{2}(v)=\frac{\deg(v)}{2}$. For $i=1,2,\dots, m$, let $v^{i}$ be the copy of vertex $v$ of $G$ in $mG$.
		We define a vertex labeling $g$ of $mG$; $m\geq 1$, in the following way: for $i=1,2,\dots, m$, 
		\begin{equation*}
			g(v^{i})=
			\begin{cases}
				m(f(v)-1)+i &\text{if $h(v)=1$},\\
				mf(v)+1-i &\text{if $h(v)=-1$}.
			\end{cases}
		\end{equation*}
	Note that, if a vertex in $G$ is labeled with the number $t$; $t=1,2,\dots,n=|G|$ by $f$, then the corresponding vertices in $mG$ are labeled with the numbers from the set $\{m(t-1)+1, m(t-1)+2,\hdots, mt\}$ by $g$. Thus we immediately obtain that the labeling $g$ is a bijective mapping that assigns numbers $1,2,\hdots,m|V(G)|$ to the vertices of $mG$.
		Moreover, for the weight of the vertex $u^{i}$; $i=1,2,\hdots,m$ in $mG$, under the labeling $g$, we obtain the following:
		\begin{align*}
			w_{g}(u^{i})&=\sum_{v\in N(u^{i})}g(v)=\sum_{v\in N(u^{i}),h(v)=1}g(v)+\sum_{v\in N(u^{i}), h(v)=-1}g(v)\\
			&=\sum_{v\in N(u^{i}),h(v)=1}(m(f(v)-1)+i)+\sum_{v\in N(u^{i}),h(v)=-1}(mf(v)+1-i)\\
			&=m\sum_{v\in N(u^{i}),h(v)=1} f(v)+(i-m)n_{h}^{1}(v)+m\sum_{v\in N(u^{i}),h(v)=-1} f(v)+(1-i)n_{h}^{2}(v)\\
			&=m\sum_{v\in N(u)}f(v)+(i-m)n^{1}_{h}(u)+(1-i)n_{h}^{2}(u)\\
			&=m\ w_{f}(u)+(i-m)n^{1}_{h}(u)+(1-i)n_{h}^{2}(u).
		\end{align*}
		Since, for every vertex $v\in V(G)$ we have, $n_{h}^{1}(v)=n_{h}^{2}(v)=\tfrac{\deg(v)}{2}$, we obtain,
		\begin{equation*}
			w_{g}(u^{i})=m\ w_{f}(u)+\tfrac{(1-m)}{2}\deg(u),
		\end{equation*}
		for $i=1,2,\hdots,m$. This means that the corresponding vertices in different copies have the same weight. If for all adjacent vertices $u,v\in V(G)$ holds,
		\begin{equation*}
			m\ w_{f}(v)+\tfrac{(1-m)}{2}\deg(v)\not=	m\ w_{f}(u)+\tfrac{(1-m)}{2}\deg(u),
		\end{equation*} 
		then all adjacent vertices in $mG$ also have distinct weights. Thus, $g$ is a local distance antimagic labeling for $mG$ and $\chi_{ld}(mG)\leq p$. This concludes the proof. 
	\end{proof}
	Note that if the graph $G$ is a regular graph, then the Equation \ref{copiesequation1} trivially holds.
	\begin{cor}
		For positive integers $m$ and $n_1,n_2,\dots,n_k$ all even, $\chi_{ld}(mK_{n_1,n_2,\dots,n_k})=k$.
	\end{cor}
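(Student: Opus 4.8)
The plan is to sandwich the quantity between $k$ and $k$. For the lower bound I would simply combine the inequality $\chi_{ld}(H)\ge\chi(H)$ noted in the introduction with the fact that the chromatic number of a disjoint union is the maximum of the chromatic numbers of its components. Since $\chi(K_{n_1,\dots,n_k})=k$, this gives $\chi_{ld}(mK_{n_1,\dots,n_k})\ge\chi(mK_{n_1,\dots,n_k})=k$ at once, with no labeling computation required.

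For the upper bound I would invoke Theorem \ref{copiesthm1} with $G=K_{n_1,\dots,n_k}$. Two of its hypotheses are immediate: because every $n_i$ is even, the criterion of \cite{NBC} for complete multipartite graphs guarantees that $G$ admits a neighborhood balanced coloring, and by Theorem \ref{multipartite} there is a local distance antimagic labeling $f$ of $G$ using exactly $p=k$ colors. What remains is to arrange that $f$ also satisfies the separation condition \eqref{copiesequation1}, after which Theorem \ref{copiesthm1} yields $\chi_{ld}(mK_{n_1,\dots,n_k})\le k$ and the proof closes.

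To analyze \eqref{copiesequation1} I would first exploit the rigid structure of complete multipartite graphs. Writing $N=\sum_i n_i$, $S=\tfrac{N(N+1)}{2}$ and $s_i=\sum_{x\in V_i}f(x)$, every vertex of a part $V_i$ has the same open neighborhood $V\setminus V_i$, hence the common weight $w_i=S-s_i$ and common degree $d_i=N-n_i$. Since every edge of $G$ joins two distinct parts, \eqref{copiesequation1} is equivalent to requiring the $k$ numbers $\Phi_i=m\,w_i+\tfrac{1-m}{2}d_i$ to be pairwise distinct. Clearing the factor $2$ and discarding the part-independent constant, the equality $\Phi_i=\Phi_j$ reduces to $2m(w_i-w_j)=(1-m)(n_i-n_j)$. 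In particular, whenever $n_i=n_j$ this forces $w_i=w_j$, which is already excluded because $f$ is local distance antimagic; so the only pairs that can cause trouble are those of distinct part-sizes.

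The main obstacle is therefore controlling the finitely many pairs with $n_i\ne n_j$, where I must guarantee $w_i-w_j\ne\frac{(1-m)(n_i-n_j)}{2m}$. Here I would use the freedom still available in choosing $f$: each such pair forbids a single value of the difference $s_i-s_j=w_j-w_i$. Starting from the labeling of Theorem \ref{multipartite} and transposing one label of $V_i$ with one of $V_j$ changes $s_i-s_j$ by $2(b-a)$ while leaving every other part-sum unchanged, so a suitable transposition moves each offending difference off its one forbidden value. Performing this for all bad pairs produces a local distance antimagic labeling with $k$ colors meeting \eqref{copiesequation1}. I expect the delicate point to be carrying out these transpositions \emph{simultaneously}, so that no swap recreates a coincidence already cured or collapses two part-sums $s_i=s_j$; I would control this by ordering the pairs and choosing the transposed labels to preserve distinctness of all the $s_i$ throughout, which is possible since $N$ is large relative to the constant number of forbidden values.
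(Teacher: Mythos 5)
Your overall architecture coincides with the paper's (implicit) one: the paper states this corollary with no proof at all, and the intended argument is exactly your sandwich --- the lower bound $\chi_{ld}\ge\chi=k$, and the upper bound from Theorem~\ref{copiesthm1} applied to $G=K_{n_1,\dots,n_k}$, which admits a neighborhood balanced coloring because all $n_i$ are even and admits a $k$-color local distance antimagic labeling by Theorem~\ref{multipartite}. You deserve credit for noticing the point the paper skips: the remark that condition~\eqref{copiesequation1} ``trivially holds'' only covers regular $G$, and $K_{n_1,\dots,n_k}$ is regular only when all parts are equal. Your reduction of \eqref{copiesequation1} to $2m(w_i-w_j)\ne(1-m)(n_i-n_j)$, and the observation that equal part sizes are automatically safe while unequal ones forbid a single value of $s_i-s_j$, are both correct. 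The condition is genuinely nonvacuous: for instance, the labeling of $K_{2,8,8}$ with part sums $35,37,99$ is local distance antimagic with $3$ colors but violates \eqref{copiesequation1} for $m=3$, so \emph{some} choice of $f$ really has to be made.

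The gap is exactly where you flag it: the claim that the $O(k^2)$ offending differences can be repaired by transpositions ``simultaneously'' is asserted, not proved. Each swap between $V_i$ and $V_j$ perturbs $s_i-s_l$ and $s_j-s_l$ for every other part $l$, so it can both recreate a previously cured coincidence and collide two part sums (destroying the local distance antimagic property itself); ``$N$ is large relative to the constant number of forbidden values'' is not a proof, since $N$ can be as small as $2k$ while the number of constraints is quadratic in $k$. To close this you need an explicit labeling verified against \eqref{copiesequation1} --- e.g.\ assigning consecutive label blocks to parts ordered by decreasing size and checking that $|s_j-s_i|\ge\tfrac{|n_i-n_j|}{2}$ whenever the forbidden value $\tfrac{(m-1)|n_i-n_j|}{2m}$ is a positive integer of the right sign --- or a genuine counting/greedy argument that one consistent choice of swaps exists. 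As written, the upper bound is not established; it is only made plausible. (The paper, for its part, establishes nothing here either, so your attempt is if anything more honest about where the difficulty lies.)
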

	\begin{cor}
		For positive integers $m$ and $t$, we have $\chi_{ld}(mC_4^{(t)})=t+1$.
		\begin{proof}
			As $C_4^{(t)}$ admits a neighborhood balanced coloring, we have  $\chi_{ld}(mC_4^{(t)})\leq t+1$. Using the same argument as in Theorem \ref{copiesofonevertexc4}, we can show that any copy of $C_4^{(t)}$ requires atleast $t+1$ distinct weights. Hence,  $\chi_{ld}(mC_4^{(t)})=t+1$.
		\end{proof}
	\end{cor}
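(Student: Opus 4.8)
The plan is to prove the two bounds separately: the upper bound $\chi_{ld}(mC_4^{(t)})\le t+1$ will follow from Theorem~\ref{copiesthm1}, while the lower bound $\chi_{ld}(mC_4^{(t)})\ge t+1$ will follow by importing the counting argument of Theorem~\ref{copiesofonevertexc4} into a single copy of $C_4^{(t)}$.

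For the upper bound I would first check that $C_4^{(t)}$ satisfies the hypotheses of Theorem~\ref{copiesthm1}. To exhibit a neighborhood balanced coloring, set $\sigma(u)=1$, $\sigma(x_i)=1$, $\sigma(y_i)=-1$ and $\sigma(z_i)=-1$ for every $i$: each $z_i$ sees $\{x_i,y_i\}$, each $x_i$ and each $y_i$ sees $\{u,z_i\}$, and $u$ sees the $t$ vertices $x_i$ coloured $1$ and the $t$ vertices $y_i$ coloured $-1$, so every vertex is balanced. I would then take $f$ to be the local distance antimagic labeling with $p=t+1$ colours built in Theorem~\ref{copiesofonevertexc4} and verify inequality~\eqref{copiesequation1} edge by edge. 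The edges split into rim edges $x_iz_i,\,y_iz_i$, whose endpoints both have degree $2$ so that the term $\tfrac{1-m}{2}\deg(\cdot)$ cancels and \eqref{copiesequation1} reduces to $w_f(x_i)\neq w_f(z_i)$ (already guaranteed since $f$ is local distance antimagic), and hub edges $ux_i,\,uy_i$, whose endpoints have degrees $2t$ and $2$ and weights $w_f(u)=t(3t+2)$ and $w_f(x_i)\le 6t-1$. Granting \eqref{copiesequation1} on all edges, Theorem~\ref{copiesthm1} gives $\chi_{ld}(mC_4^{(t)})\le t+1$.

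For the lower bound I would fix one copy, say the $\ell$-th, of $C_4^{(t)}$ inside $mC_4^{(t)}$; because the $m$ copies are vertex-disjoint, the neighborhoods inside this copy coincide with those of $C_4^{(t)}$. Let $g$ be any local distance antimagic labeling of $mC_4^{(t)}$. Inside the $\ell$-th copy, $x_i^{(\ell)}$ and $y_i^{(\ell)}$ share the neighborhood $\{u^{(\ell)},z_i^{(\ell)}\}$, so $w_g(x_i^{(\ell)})=g(u^{(\ell)})+g(z_i^{(\ell)})$; since $g$ is a bijection these $t$ values are pairwise distinct, and each differs from $w_g(u^{(\ell)})$ because $u^{(\ell)}$ is adjacent to $x_i^{(\ell)}$. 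This produces $t+1$ distinct weights, so $\chi_{ld}(mC_4^{(t)})\ge t+1$.

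The step I expect to be the main obstacle is the verification of \eqref{copiesequation1} on the hub edges, since there the degree terms do not cancel. One must argue quantitatively that, for all $m\ge 1$ and $t\ge 2$, the quantity $m\,w_f(u)+\tfrac{1-m}{2}\,(2t)$ stays separated from $m\,w_f(x_i)+\tfrac{1-m}{2}\,(2)$; comparing magnitudes, the dominant term $m\,t(3t+2)$ on the hub side outstrips $m\,w_f(x_i)\le m(6t-1)$ by more than the degree discrepancy $(m-1)(t-1)$ can absorb, so the two sides never coincide (the case $t=1$ is just $C_4$ with $t+1=2$, which is immediate). Everything else reduces to a direct appeal to the two cited theorems or to the bijectivity of the labeling.
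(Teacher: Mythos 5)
Your proposal is correct and follows essentially the same route as the paper: the upper bound via Theorem~\ref{copiesthm1} applied to a neighborhood balanced coloring of $C_4^{(t)}$ together with the labeling of Theorem~\ref{copiesofonevertexc4}, and the lower bound by repeating the counting argument of Theorem~\ref{copiesofonevertexc4} inside a single copy. You in fact supply details the paper leaves implicit (the explicit balanced coloring and the edge-by-edge verification of condition~\eqref{copiesequation1}, including the hub-edge estimate $m\,t(3t+2)-m\,w_f(x_i)\ge m(3t-1)(t-1)>(m-1)(t-1)$ for $t\ge 2$), which is a welcome strengthening rather than a departure.
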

	Note that  graphs admitting neighborhood balanced coloring are not the only graphs for which $\chi_{ld}(mG)\leq \chi_{ld}(G)$. We now present some graphs that do not admit neighborhood balanced coloring, yet satisfy the above inequality. Priyadharshini et al. \cite{Nalliah1} proved the following results.
	\begin{theorem}\cite{Nalliah1}
		For positive integers $m$, $r$, $s$ with $r$ and $s$ both odd and $r<s$, $\chi_{ld}(mK_{r,s})=2$.
	\end{theorem}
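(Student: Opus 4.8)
The plan is to establish the two inequalities $\chi_{ld}(mK_{r,s})\ge 2$ and $\chi_{ld}(mK_{r,s})\le 2$ separately. The lower bound is immediate: $mK_{r,s}$ contains an edge and is bipartite, so $\chi_{ld}(mK_{r,s})\ge\chi(mK_{r,s})=\chi(K_{r,s})=2$, using the inequality $\chi_{ld}(G)\ge\chi(G)$ noted in the introduction. Everything therefore rests on producing a local distance antimagic labelling of the $m(r+s)$ vertices that realises only two distinct weights.

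The key structural observation reduces this to a numerical partition problem. Write the $j$-th copy of $K_{r,s}$ with parts $A_j$ ($|A_j|=r$) and $B_j$ ($|B_j|=s$). Since every vertex of $A_j$ is adjacent to all of $B_j$ and to nothing else, under any bijective labelling $f$ each vertex of $A_j$ has weight $\sigma(B_j):=\sum_{x\in B_j}f(x)$, and symmetrically each vertex of $B_j$ has weight $\sigma(A_j)$. Hence the labelling uses exactly two colours precisely when we can partition $\{1,2,\dots,m(r+s)\}$ into blocks $A_1,\dots,A_m$ of size $r$ and $B_1,\dots,B_m$ of size $s$ with a common value $\sigma(A_j)=\alpha$ for all $j$, a common value $\sigma(B_j)=\beta$ for all $j$, and $\alpha\ne\beta$; the last condition guarantees properness, since the only edges join an $A_j$-vertex (weight $\beta$) to a $B_j$-vertex (weight $\alpha$). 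So I would isolate an auxiliary lemma: for odd $r<s$ with $r\ge 2$ (equivalently $r\ge 3$) such a partition of $\{1,\dots,m(r+s)\}$ exists.

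For the construction I would split on the parity of $m$. When $m$ is odd, assign the small labels $\{1,\dots,mr\}$ to the $A$-blocks and the large labels $\{mr+1,\dots,m(r+s)\}$ to the $B$-blocks; the target sums $\alpha=r(mr+1)/2$ and $\beta=s\,mr+s(ms+1)/2$ are integers exactly because $mr$ and $ms$ are odd, and a set of $mr$ consecutive integers splits into $m$ equal-sized equal-sum classes whenever this average is an integer (a standard balanced-partition fact, valid for block size $\ge 2$). Since $\alpha<\beta$ this settles the odd case.

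The even-$m$ case is the main obstacle, because the naive small/large split fails: with $m$ even and $r$ odd the average $r(mr+1)/2$ is not an integer, so no equal split of $\{1,\dots,mr\}$ into $A$-blocks exists. Here I would instead first break $\{1,\dots,m(r+s)\}$ into $m$ blocks of equal total $(r+s)(m(r+s)+1)/2$ using the complementary pairs $\{i,\,m(r+s)+1-i\}$, distributing the pairs among the copies so that each copy receives a well-spread set of labels rather than a bimodal one; then within every copy I would peel off an $r$-subset of one fixed common sum $\alpha$, taking $\beta$ to be the complementary sum (automatically constant, since the copy totals agree). The delicate point, and where the argument must be made carefully, is showing that a single target value $\alpha$ is simultaneously attainable inside all $m$ copies — this is exactly what forces the spreading of the complementary pairs, and it is also where the hypothesis $r\ge 2$ is essential: for $r=1$ each $A_j$ is a single label, the values $\sigma(A_j)$ cannot all coincide, and in fact $\chi_{ld}(mK_{1,s})>2$ for $m\ge 2$, so the statement genuinely concerns $r\ge 3$. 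Once the partition is exhibited in both parity cases, translating it back through the structural observation yields the required two-colour local distance antimagic labelling and the bound $\chi_{ld}(mK_{r,s})\le 2$.
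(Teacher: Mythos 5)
This theorem is quoted in the paper as a cited result from Priyadharshini et al.\ \cite{Nalliah1}; the paper itself contains no proof of it, so I can only assess your argument on its own terms. Your lower bound and your structural reduction are sound (with the minor caveat that your ``precisely when'' overstates necessity: two colors could also be achieved by swapping the roles of $\alpha$ and $\beta$ between copies, though this does not affect the upper-bound construction). The odd-$m$ case is essentially complete once the standard equal-sum-partition fact is invoked, and your observation that the statement fails as literally written when $r=1$ and $m\geq 2$ (the $m$ centers receive distinct labels, so the leaves alone already carry $m\geq 2$ distinct weights, and a short counting argument rules out $m=2$ as well) is a genuinely useful remark.

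The gap is the even-$m$ case, and it is a real one rather than a routine omission: you correctly identify that the naive small/large split fails on parity grounds, you propose distributing complementary pairs $\{i,\,m(r+s)+1-i\}$ among the copies and then peeling off an $r$-subset of common sum $\alpha$ from each copy, and you then explicitly flag that the simultaneous attainability of a single $\alpha$ in all $m$ copies is ``the delicate point'' --- but you never resolve it. Note that the most natural versions of your own plan run into the same parity wall you diagnosed for the naive split: if each $A_j$ were built from whole complementary pairs its sum would be a multiple of $m(r+s)+1$ times $r/2$, which is not available since $r$ is odd; and if each $A_j$ takes one unpaired ``special'' label plus $(r-1)/2$ pairs, the special labels would all have to be equal, which is impossible. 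So the construction genuinely requires breaking several pairs across each copy in a coordinated way (for example, seeding each $A_j$ with an equal-sum triple drawn from a suitable $3m$-element subset and padding with $(r-3)/2$ complementary pairs, which is where the hypothesis $r\geq 3$ earns its keep), and until such a partition is exhibited and verified, the upper bound $\chi_{ld}(mK_{r,s})\leq 2$ is only established for odd $m$.
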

	\begin{theorem}\cite{Nalliah1}
		For positive integers $m$ and $n>1$ odd, $\chi_{ld}(mK_{n,n})=2$.
	\end{theorem}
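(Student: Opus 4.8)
The plan is to prove the two bounds separately. Since $mK_{n,n}$ is bipartite and has no isolated vertex, $\chi_{ld}(mK_{n,n})\ge \chi(mK_{n,n})=2$ (using $\chi_{ld}(G)\ge\chi(G)$ from the introduction), so all the content lies in producing one local distance antimagic labeling whose induced coloring uses only two colors.

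First I would record the structural simplification. Write the $k$-th copy with parts $A_k,B_k$, $|A_k|=|B_k|=n$, for $k=1,\dots,m$. In $K_{n,n}$ every vertex of $A_k$ has open neighborhood exactly $B_k$, so under any bijection $f$ all vertices of $A_k$ share the weight $\sum_{x\in B_k}f(x)$ and all vertices of $B_k$ share the weight $\sum_{x\in A_k}f(x)$. Hence a labeling induces only two colors precisely when the $2m$ block sums take exactly two values $\alpha\neq\beta$; properness inside each copy then forces $\{\sum A_k,\sum B_k\}=\{\alpha,\beta\}$ for every $k$, so exactly $m$ blocks sum to each value and $\alpha+\beta=n(2mn+1)$. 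Because $n$ is odd, $n(2mn+1)$ is odd, so the blocks can never all be made equal; this is exactly why the two \emph{consecutive} targets $\alpha=\tfrac{n(2mn+1)-1}{2}$ and $\beta=\alpha+1$ are the right choice. Thus the task reduces to partitioning $\{1,\dots,2mn\}$ into $2m$ blocks of size $n$, with $m$ blocks of each sum; pairing one $\alpha$-block with one $\beta$-block inside each copy then yields the desired $2$-coloring.

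For the partition I would reduce general odd $n$ to the case $n=3$. Arrange the $2m$ blocks as rows of a $2m\times n$ grid and fill the first $n-3$ columns (an even number, since $n$ is odd) with $1,\dots,2m(n-3)$ in boustrophedon order, running down the first column, up the second, and so on. Pairing consecutive columns shows that each pair contributes the same amount to every row, so after these columns every block has received a single common constant $C$. It then remains to place the $6m$ consecutive integers $2m(n-3)+1,\dots,2mn$ in the last three columns so that the $2m$ triple sums take two consecutive values $m$ times each; since adding a constant shifts all triple sums equally, this is equivalent to the case $n=3$ over $\{1,\dots,6m\}$. There I would take, for $i=1,\dots,m$, the triples
\[
L_i=\{\,i,\ 3m+i,\ 6m+1-2i\,\},\qquad U_i=\{\,2m+1-i,\ 3m+1-i,\ 4m+2i\,\},
\]
whose low, middle, and high entries respectively exhaust $\{1,\dots,2m\}$, $\{2m+1,\dots,4m\}$, $\{4m+1,\dots,6m\}$ (the $L_i$ taking the odd offsets and the $U_i$ the even offsets above $4m$), and which satisfy $\sum L_i=9m+1$ and $\sum U_i=9m+2$ for all $i$, giving $m$ triples of each value.

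I expect the main obstacle to be this $n=3$ base case: exhibiting triples whose sums hit exactly two values, equally often, while still tiling the ground set. The explicit formulas resolve it, but the genuinely delicate points are verifying coverage and bijectivity of the triple families and the exact sum count, together with confirming that the boustrophedon filling of the first $n-3$ columns truly adds the identical constant $C$ to every block, which relies precisely on $n-3$ being even. Everything else, namely the lower bound and the assembly of the $\alpha$- and $\beta$-blocks into the $m$ copies, is routine.
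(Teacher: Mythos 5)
Your proposal is correct. Note that the paper does not prove this statement at all --- it is quoted from Priyadharshini and Nalliah \cite{Nalliah1} as a known result --- so there is no in-paper argument to compare against; your write-up stands as a self-contained proof. The reduction is sound: since every vertex of $A_k$ has open neighborhood exactly $B_k$ and vice versa, the whole problem is indeed equivalent to splitting $\{1,\dots,2mn\}$ into $2m$ blocks of size $n$ whose sums take only the two consecutive values $\alpha=\tfrac{n(2mn+1)-1}{2}$ and $\alpha+1$, each $m$ times (two consecutive values are forced because $n$ odd makes the total $mn(2mn+1)$ not divisible by $2m$ evenly into equal blocks). Your boustrophedon filling of the first $n-3$ columns adds the constant $\sum_{j=1}^{(n-3)/2}(8mj-4m+1)$ to every row, and the base triples check out: $\sum L_i=9m+1$, $\sum U_i=9m+2$, the low entries cover $\{1,\dots,2m\}$, the middle entries $\{2m+1,\dots,4m\}$, and the high entries split $\{4m+1,\dots,6m\}$ into odd and even offsets, so the $2m$ triples partition $\{1,\dots,6m\}$ with $m$ triples of each sum. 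Pairing one $\alpha$-block with one $(\alpha+1)$-block in each copy gives adjacent weights $\alpha+1$ and $\alpha$, hence a proper $2$-coloring, and the lower bound $\chi_{ld}\geq\chi=2$ finishes the argument.
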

	
	\begin{theorem}\label{treescopiesthm}
		Let $m$ be a positive integer and $G$ be a graph of order $n$, having $k$ pendant vertices, such that all non-pendant vertices can be neighborhood balanced colored.
        Let $S=\{N(l): \text{$l$ is a pendant vertex of $G$}\}$ and $|S|=s$. Let $f$ be a local distance antimagic labeling of $G$ that assigns $p$ distinct weights to vertices of $G$. Let for a non-pendant vertex $y$ we have,
		\begin{equation}\label{copieseq4}
			m\big(f(y)-w_f(y)+\tfrac{\deg(y)}{2}\big)-\tfrac{\deg(y)}{2}>m-1\ \text{or}\ <0,
		\end{equation}
		and for adjacent non-pendant vertices $u$ and $v$ we have,
		\begin{equation}\label{copieseq5}
			m\ w_{f}(u)+\tfrac{(1-m)}{2}\deg(u)\not=	m\ w_{f}(v)+\tfrac{(1-m)}{2}\deg(v),
		\end{equation} 
		then $ms+1\leq \chi_{ld}(mG)\leq ms+p.$
		\begin{proof}
			Let $V(G)=\{v_1,v_2,\dots, v_k, v_{k+1},v_{k+2},\dots,v_n\}$, where the vertices $v_1,v_2,\dots, v_k,$ are the pendant vertices and $v_{k+1},v_{k+2},\dots,v_n$ are the non-pendant vertices. Let $h$ be a 2-vertex coloring of vertices of $G$, such that all non-pendant vertices are neighborhood balanced colored. For $j=1,2,\dots,m$, let $v_{i}^{j}$ be the copies of  vertex $v_{i}$ of $G$ in graph $mG$ where $i=1,2,\dots,n$.
			We define a bijection 
			$g\colon V(mG)\rightarrow \{1,2,\dots, mn\}$ in the following way: for $j=1,2,\dots,m$,
			\begin{equation*}
				g(v_i^j)=
				\begin{cases}
					m(f(v_i)-1)+j &\text{if $h(v_i)=1$},\\
					mf(v_i)+1-j &\text{if $h(v_i)=-1$}.
				\end{cases}
			\end{equation*}
			We now calculate the weight of vertices in $mG$. As the non-pendant vertices of $G$, namely, $v_{k+1},v_{k+2},\dots,v_n$, are neighborhood balanced colored, we have,
			$$w(v_i^j)=m\cdot w_f(v_i)+\frac{(1-m)}{2}\deg(v_i)\quad\text{for $i=k+1,k+2,\dots,n$}.$$
			Thus, $g$ is such a labeling that the corresponding non-pendant vertices in different copies have the same weight. Also, Equation \ref{copieseq5} ensures that the adjacent non-pendant vertices in any copy have distinct weights. Therefore, the non-pendant vertices of $mG$ contribute at most $p-s$ distinct weights.\\
			Now consider the pendant vertices $v_i^j$ in graph $mG$ where $i=1,2,\dots,k$ and $j=1,2,\dots,m$.
			$$w(v_i^j)=g(x^j)=\begin{cases}
				m(f(x)-1)+j &\text{if $h(x)=1$},\\
				mf(x)+1-j &\text{if $h(x)=-1$},
			\end{cases}  \quad \text{where $x\in N(v_i)$. }$$ 
			Clearly, the pendant vertices in corresponding copies receive distinct weights. If in the $j^{th}$ copy of $G$, the weight of some pendant vertex $v_i^j$ is equal to the weight of its adjacent vertex say $v_p^j$ for some $p=k+1,k+2,\dots,n$, i.e, $w(v_i^j)=w(v_p^j)$, then for some $r=0,1,2,\dots,m-1$,
			\begin{align*}
				m f(v_p)-r&=m\cdot w_f(v_p)+\tfrac{(1-m)}{2}\deg(v_p), \\
				r&=m\big(f(v_p)-w_f(v_p)+\tfrac{\deg(v_p)}{2}\big)-\tfrac{\deg(v_p)}{2}.
			\end{align*}
			But it follows from Equation \ref{copieseq4} that the above equality is impossible. So $g$ is a local distance antimagic labeling of $mG$. As $|S|=s$, the pendant vertices in each copy of $mG$ contribute $s$ distinct weights. So, in total, all the pendant vertices in $mG$ contribute, at most, to $ms$ distinct weights. Combining these with the $p$ distinct weights of the non-pendant vertices, we obtain at most $ms+p$ distinct weights. Hence  $\chi_{ld}(mT)\leq ms+p.$\\ To prove the lower bound, let $f$ be any local distance antimagic labeling of $mG$.
			Note that the graph $mG$ has $mk$ pendant vertices $v_i^j$, where $i=1,2,\dots,k$  and $j=1,2,\dots,m$. We form a set $S'=\{N(l): \text{$l$ is a pendant vertex in $mG$}\}$. As the pendant vertices in $mG$ are the union of pendant vertices in $G$ and $|S|=s$, we have $|S'|=ms$. Now the weight of these pendant vertices are $w(v_i^j)=f(x^j)$, where $x\in N(v_i)$. So, all the pendant vertices in $mG$ will receive $ms$ distinct weights under $f$. If a pendant vertex receives the label $mn$ (highest label), then its adjacent vertex receives a weight greater than $mn$, or if a non-pendant vertex receives a label $mn$, then its adjacent (another) non-pendant vertex has a weight greater than $mn$ and hence $\chi_{ld}(mG)\geq ms+1$.
		\end{proof}
	\end{theorem}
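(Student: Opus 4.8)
The plan is to adapt the construction of Theorem \ref{copiesthm1}, handling the pendant vertices separately since a degree-one vertex can never be neighborhood balanced colored. Write $V(G)=\{v_1,\dots,v_k,v_{k+1},\dots,v_n\}$ with $v_1,\dots,v_k$ the pendant vertices, and fix a $2$-coloring $h\colon V(G)\to\{1,-1\}$ under which every non-pendant vertex is neighborhood balanced (this exists by hypothesis, which forces every non-pendant vertex to have even degree); the values of $h$ on the pendant vertices may be chosen arbitrarily. For the $j$-th copy $v_i^j$ in $mG$ I would define
\begin{equation*}
g(v_i^j)=\begin{cases} m(f(v_i)-1)+j & \text{if } h(v_i)=1,\\ mf(v_i)+1-j & \text{if } h(v_i)=-1,\end{cases}
\end{equation*}
exactly as in Theorem \ref{copiesthm1}. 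The first, routine, step is to check that $g$ is a bijection onto $\{1,\dots,mn\}$: for each value $t=f(v_i)$ the $m$ copies of $v_i$ occupy precisely the block $\{m(t-1)+1,\dots,mt\}$ regardless of the sign $h(v_i)$, so these blocks tile $\{1,\dots,mn\}$.

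The second step reuses the weight computation of Theorem \ref{copiesthm1} verbatim for the non-pendant vertices: since each non-pendant $v_i$ has $n_h^1(v_i)=n_h^2(v_i)=\tfrac{\deg(v_i)}{2}$, summing $g$ over $N(v_i^j)$ collapses to $w_g(v_i^j)=m\,w_f(v_i)+\tfrac{1-m}{2}\deg(v_i)$, independent of the copy index $j$. Hence corresponding non-pendant vertices in different copies share a weight, Equation \ref{copieseq5} forces adjacent non-pendant vertices to differ, and so the non-pendant vertices of $mG$ use at most $p$ distinct weights in total. For a pendant vertex $v_i^j$ with support $x$ (necessarily non-pendant), its unique neighbour gives $w_g(v_i^j)=g(x^j)$, which varies with $j$; thus each of the $s$ distinct supports spawns $m$ distinct pendant weights, for $ms$ pendant weights altogether.

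The crux, and the step I expect to be the main obstacle, is to rule out the only adjacency not yet controlled: a pendant vertex colliding in weight with its own support. Suppose $w_g(v_i^j)=w_g(v_p^j)$ for a pendant $v_i^j$ and its support $v_p$. In either sign case one can write $g(v_p^j)=mf(v_p)-r$ with $r\in\{0,1,\dots,m-1\}$, and equating with the non-pendant weight $m\,w_f(v_p)+\tfrac{1-m}{2}\deg(v_p)$ I would solve for $r$ to obtain exactly
\begin{equation*}
r=m\big(f(v_p)-w_f(v_p)+\tfrac{\deg(v_p)}{2}\big)-\tfrac{\deg(v_p)}{2}.
\end{equation*}
Equation \ref{copieseq4} asserts that this quantity is $>m-1$ or $<0$, hence falls outside $\{0,1,\dots,m-1\}$, so no admissible $r$ exists and the collision is impossible. (Notably the condition is the same for $h(v_p)=1$ and $h(v_p)=-1$, which is what makes a single inequality suffice.) This confirms $g$ is a local distance antimagic labeling, and combining the $ms$ pendant weights with the at most $p$ non-pendant weights yields $\chi_{ld}(mG)\le ms+p$.

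For the lower bound I would argue along the lines of Theorem \ref{nalliahleaf}. The $mk$ pendant vertices of $mG$ have neighbourhood collection $S'=\{N(l)\}$ of size $ms$, since each of the $s$ supports of $G$ is reproduced in $m$ copies; because a pendant's weight equals the label of its support, these supply $ms$ distinct weights, each at most $mn$. Finally, a maximum-label argument supplies one further weight: the vertex carrying the label $mn$ contributes $mn$ to the weight of a neighbour of degree at least two, producing a weight strictly larger than $mn$ and therefore distinct from all pendant weights. Hence at least $ms+1$ colors are required, giving $ms+1\le\chi_{ld}(mG)$.
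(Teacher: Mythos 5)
Your proposal is correct and follows essentially the same route as the paper's proof: the identical shift labeling $g(v_i^j)$, the same collapse of non-pendant weights to $m\,w_f(v_i)+\tfrac{1-m}{2}\deg(v_i)$ with Equation \ref{copieseq5} handling non-pendant adjacencies, the same use of Equation \ref{copieseq4} to exclude a pendant--support collision via the residue $r\in\{0,\dots,m-1\}$, and the same $ms$ pendant weights plus maximum-label argument for the lower bound. The only cosmetic difference is that you count the non-pendant vertices as contributing at most $p$ weights directly, where the paper first says $p-s$ and then combines with $p$ anyway; the final bound $ms+p$ is unchanged.
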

	\begin{remark}\label{treescopiesremark}
		If $G$ is a graph having a local distance antimagic labeling $f$, such that all the conditions of Theorem \ref{treescopiesthm} are true and in addition, the labeling $f$ is such that $k$ out of the $s$  weights of pendant vertices of $G$ are equal to $k$ weights of its non-pendant vertices, then the non-pendant vertices receive $p-s+k$ distinct weights under $f$. Under the labeling $g$ defined in Theorem \ref{treescopiesthm}, all the non-pendant vertices of $mG$ contribute $p-s+k$ weights to $\chi_{ld}(mG)$. Combining these with the $ms$ weights of the pendant vertices in $mG$, we get $\chi_{ld}(mG)\leq p+(m-1)s+k$. 
	\end{remark}
	\begin{cor}
		For positive integers $p$ and $m$, $4m+1\leq\chi_{ld}(m(C_4\circ \overline{K_{2p}}))\leq 4m+2$.
	\end{cor}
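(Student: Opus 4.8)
The plan is to realize $G := C_4\circ\overline{K_{2p}}$ as an instance of Theorem \ref{treescopiesthm}, sharpened by Remark \ref{treescopiesremark}. First I would fix the structure: $G$ consists of a $4$-cycle $v_1v_2v_3v_4$ together with $2p$ pendant vertices hanging off each $v_j$, so the non-pendant vertices are exactly $v_1,\dots,v_4$, each of even degree $2+2p$, and there are $8p$ pendant vertices. Every pendant neighborhood is one of $\{v_1\},\{v_2\},\{v_3\},\{v_4\}$, so the set $S$ of pendant neighborhoods has $s=4$ elements. The lower bound is then immediate from Theorem \ref{treescopiesthm}, which gives $\chi_{ld}(mG)\ge ms+1=4m+1$.

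For the upper bound I would reuse the labeling behind the earlier corollary $\chi_{ld}(C_4\circ\overline{K_{2p}})=6$. In that construction each pendant vertex inherits the label of its support vertex as its weight, so the pendants realize exactly the four weights $f(v_1),\dots,f(v_4)\in\{1,2,3,4\}$, while each cycle vertex $v_j$ receives weight $C+w_{C_4}(v_j)$ for the large constant $C=8p^2+9p$; because $\chi_{ld}(C_4)=2$ (Theorem \ref{cycle}), the values $w_{C_4}(v_j)$, and hence the cycle weights, take only $2$ distinct values. Thus under this $f$ the non-pendant vertices contribute $2$ colors and the pendants contribute $4$; since the cycle weights exceed $C$ while the pendant weights lie in $\{1,2,3,4\}$, the two families are disjoint, so no pendant weight coincides with a non-pendant weight ($k=0$). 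This is precisely the hypothesis of Remark \ref{treescopiesremark} with $6$ total colors, $s=4$, and $k=0$, yielding $\chi_{ld}(mG)\le 6+(m-1)\cdot 4+0=4m+2$.

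To legitimately invoke the theorem I must still verify its two structural hypotheses for this $f$. A neighborhood balanced coloring $h$ of the non-pendant vertices exists: color $v_1,v_3$ blue and $v_2,v_4$ red, so each $v_j$ already sees two cycle-neighbors of a single color, and I balance its remaining $2p$ pendant neighbors by coloring $p-1$ of them in that color and $p+1$ in the other, which is possible for every $p\ge 1$. Condition \ref{copieseq5} reduces to $w_f(u)\neq w_f(v)$ for adjacent cycle vertices, since all non-pendant vertices share the common degree $2+2p$ so the degree terms cancel; this holds because the two cycle-weights alternate around $C_4$. Condition \ref{copieseq4} holds because, for a cycle vertex $y$, the quantity $f(y)-w_f(y)+\tfrac{\deg(y)}{2}$ equals $f(y)-C-w_{C_4}(y)+(1+p)$, and since $C=8p^2+9p>1+p$ for all $p\ge 1$ this is strictly negative, forcing the whole left-hand side of \ref{copieseq4} below $0$.

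The main obstacle I anticipate is the bookkeeping of the previous paragraph rather than any genuinely new idea: one must confirm that the explicit $\chi_{ld}(G)=6$ labeling really produces only $2$ distinct cycle-weights with $k=0$, and must check the quantitative inequality \ref{copieseq4} uniformly in $p$, where the ``large constant'' $C$ has to be shown to dominate even for $p=1$. Once these verifications are recorded, the result is a direct substitution of $s=4$ together with the $6$-color labeling of $G$ into Theorem \ref{treescopiesthm} and Remark \ref{treescopiesremark}, combining the bounds $4m+1\le\chi_{ld}(m(C_4\circ\overline{K_{2p}}))\le 4m+2$.
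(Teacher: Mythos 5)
Your proposal is correct and follows exactly the route the paper intends: the corollary is stated immediately after Theorem \ref{treescopiesthm} and Remark \ref{treescopiesremark}, and it is obtained precisely by feeding the $6$-weight labeling of $C_4\circ\overline{K_{2p}}$ (with $s=4$, $k=0$) into them, giving $4m+1=ms+1$ below and $p+(m-1)s+k=4m+2$ above. Your explicit verifications of the neighborhood-balanced coloring of the support vertices and of conditions \ref{copieseq4} and \ref{copieseq5} are sound and in fact more detailed than what the paper records.
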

	\begin{cor}\label{treescopiesthm}
		Let $m$ be a positive integer and $T$ be a tree of order $n$, having $k$ leaves, such that all non-leaf vertices have even degrees.  Let $S=\{N(l): \text{$l$ is a leaf of $T$}\}$ and $|S|=s$. Let $f$ be a local distance antimagic labeling of $T$ that assigns $p$ distinct weights to vertices of $T$. Let for a  support vertex $y$ we have,
		\begin{equation}\label{copieseq4}
			m\big(f(y)-w_f(y)+\tfrac{\deg(y)}{2}\big)-\tfrac{\deg(y)}{2}>m-1\ \text{or}\ <0,
		\end{equation}
		and for adjacent non-leaf vertices $u$ and $v$ we have,
		\begin{equation}\label{copieseq5}
			m\ w_{f}(u)+\tfrac{(1-m)}{2}\deg(u)\not=	m\ w_{f}(v)+\tfrac{(1-m)}{2}\deg(v),
		\end{equation} 
		then $ms\leq \chi_{ld}(mT)\leq ms+p.$
	\end{cor}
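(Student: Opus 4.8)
The plan is to derive this corollary as a direct specialization of the preceding copies theorem for graphs with $k$ pendant vertices to the class of trees, so that essentially no new construction is needed. The only genuine content is to verify that a tree meeting the corollary's hypotheses also meets the hypotheses of that theorem; everything afterward is an appeal to it. First I would set up the identification: in a tree $T$ the pendant vertices are exactly the leaves and the non-pendant vertices are exactly the non-leaf vertices, so the data $k$, $S$, $s$, $f$, $p$ in the corollary coincide with the data of the theorem, and the two displayed hypotheses on support vertices and on adjacent non-leaf vertices are stated verbatim, hence inherited with no change.

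The key step is to replace the corollary's degree hypothesis, namely that every non-leaf vertex has even degree, by the theorem's hypothesis that every non-pendant vertex can be neighborhood balanced colored. This is precisely the equivalence supplied by Theorem \ref{1,2-trees}: for a tree of order $n\ge 3$, a $2$-coloring making every non-leaf vertex neighborhood balanced colored exists if and only if every non-leaf vertex has even degree. Thus the even-degree assumption guarantees the $2$-coloring $h$ needed to build the labeling $g$ in the theorem's proof.

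With all hypotheses verified, I would simply invoke the preceding theorem to obtain $ms+1\le \chi_{ld}(mT)\le ms+p$, and since $ms\le ms+1$ this yields the stated bounds $ms\le \chi_{ld}(mT)\le ms+p$. For completeness I would recall that the upper bound comes from the explicit labeling $g$, under which the $mk$ pendant vertices of $mT$ contribute at most $ms$ distinct weights while the non-pendant vertices contribute at most $p$; the lower bound $ms$ follows at once because those $mk$ pendant vertices already realize $ms$ distinct weights, as the weight of each pendant vertex equals the $g$-label of its unique support neighbor.

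There is no real obstacle here, since the two numerical conditions are inherited literally and the construction is imported wholesale; the single point requiring care is the invocation of Theorem \ref{1,2-trees} to convert the degree condition into colorability. I would also remark that the corollary states the lower bound as $ms$, one less than the $ms+1$ the theorem actually delivers, so the stated conclusion is in fact slightly weaker than what the argument yields.
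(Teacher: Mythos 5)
Your proposal is correct and matches the paper's intent exactly: the corollary is stated without proof precisely because it is the specialization of the preceding theorem to trees, with Theorem \ref{1,2-trees} supplying the required $2$-coloring that makes all non-leaf vertices neighborhood balanced colored from the even-degree hypothesis. Your observation that the argument actually yields the slightly stronger lower bound $ms+1$ is also accurate.
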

	\begin{remark}\label{treescopiesremark}
		If $T$ is a tree having a local distance antimagic labeling $f$, such that all the conditions of Theorem \ref{treescopiesthm} are true. In addition, the labeling $f$ is such that $k$ out of the $s$  weights of leaves of $T$ are equal to $k$ weights of its non-leaf vertices, then the non-leaf vertices receive $p-s+k$ distinct weights under $f$. Under the labeling $g$ defined in Theorem \ref{treescopiesthm}, all the non-leaf vertices of $mT$ contribute $p-s+k$ weights to $\chi_{ld}(mT)$. Combining these with the $ms$ weights of the leaves in $mG$, we get $\chi_{ld}(mT)\leq p+(m-1)s+k$. 
	\end{remark}
	\begin{cor}
		For positive integers $m$ and $n>2$, we have $ \chi_{ld}(mK_{1,2n})= m+1$.
	\end{cor}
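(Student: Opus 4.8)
The plan is to read off the relevant parameters of the star, feed them into the copies machinery for trees, use the accompanying remark to sharpen the upper bound, and then prove the matching lower bound by an averaging argument.

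First I would record the structural data of $K_{1,2n}$. It is a tree of order $2n+1$ whose only non-leaf vertex is the centre $c$, of even degree $2n$; hence by Theorem~\ref{1,2-trees} it admits a $2$-colouring making $c$ neighbourhood balanced coloured. Since $K_{1,2n}$ is the complete bipartite graph with parts of sizes $1$ and $2n$, Theorem~\ref{multipartite} gives $\chi_{ld}(K_{1,2n})=2$, so an optimal labeling uses $p=2$ colours. All $2n$ leaves share the single neighbourhood $\{c\}$, so $S=\{\,\{c\}\,\}$ and $s=|S|=1$.

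For the upper bound I would fix a concrete optimal labeling $f$ of $K_{1,2n}$. Assigning $f(c)=2n+1$ forces the unique leaf weight to equal $f(c)=2n+1$ and the centre weight to be $w_f(c)=\sum_{x\in N(c)}f(x)=n(2n+1)$, which differs from $f(c)$ for $n>2$; thus $f$ realises the two colours and the leaf weight coincides with no non-leaf weight, i.e. $k=0$ in the notation of the remark. A short check shows this $f$ satisfies the support-vertex hypothesis \eqref{copieseq4}: with $\deg(c)=2n$ its left-hand side is $m(-2n^{2}+2n+1)-n<0$ for $n>2$, while the adjacency hypothesis \eqref{copieseq5} is vacuous since the star has no two adjacent non-leaf vertices. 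Applying the copies construction for trees (Corollary~\ref{treescopiesthm}) together with Remark~\ref{treescopiesremark} then yields $\chi_{ld}(mK_{1,2n})\le p+(m-1)s+k=2+(m-1)=m+1$.

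The lower bound is where the real work lies, and it is the step I expect to be the main obstacle. The tree corollary only delivers $\chi_{ld}(mK_{1,2n})\ge ms=m$, because in a star the centre is adjacent only to leaves, so the usual ``the neighbour of the largest label has a strictly larger weight'' argument that manufactures the extra colour breaks down. To recover the $+1$ I would argue by counting. Fix any local distance antimagic labeling of $mK_{1,2n}$ with centres $c_{1},\dots,c_{m}$; each leaf of the $j$-th copy has weight $f(c_{j})$, so the leaves already occupy the $m$ distinct colours $f(c_{1}),\dots,f(c_{m})$, all at most $N=m(2n+1)$. The sum of all centre weights equals the sum of all $2mn$ leaf labels, which is at least $1+2+\dots+2mn=mn(2mn+1)$; hence the average centre weight is at least $n(2mn+1)$, and since $n(2mn+1)>m(2n+1)=N$ for $n\ge 2$, some centre has weight exceeding $N$. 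That centre's colour is then distinct from every leaf colour, producing an $(m+1)$-st colour and the bound $\chi_{ld}(mK_{1,2n})\ge m+1$. Combining the two estimates gives equality, so the only genuinely delicate point is exploiting the global size of the leaf-label sum to force a centre weight above the maximum available label.
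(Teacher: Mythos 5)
Your proposal is correct, and it is worth separating the two halves. For the upper bound you follow exactly the route the paper intends: take the optimal labeling of $K_{1,2n}$ with the centre receiving the top label $2n+1$ (so $p=2$, $s=1$, and the single leaf weight $2n+1$ differs from the centre weight $n(2n+1)$, i.e.\ $k=0$ in the remark's notation), verify the support-vertex inequality and note the adjacency condition is vacuous, and conclude $\chi_{ld}(mK_{1,2n})\le p+(m-1)s+k=m+1$ from the copies construction and its remark; this is precisely the paper's derivation. For the lower bound, however, you correctly diagnose that the paper's tree corollary only yields $\chi_{ld}(mK_{1,2n})\ge ms=m$, because its ``+1'' argument needs the holder of the top label to have a non-pendant neighbour, which fails when that holder is a star centre; the paper states the corollary without closing this gap. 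Your averaging argument does close it: the centre weights sum to the total of all $2mn$ leaf labels, hence at least $mn(2mn+1)$, so some centre weight is at least $n(2mn+1)>m(2n+1)$, exceeding every label and therefore every leaf weight, which forces an $(m+1)$-st colour. This is a genuine and necessary supplement rather than a restatement of the paper's reasoning: the paper's approach buys a uniform statement for all trees at the cost of a weaker lower bound, while your counting step exploits the specific structure of stars (all non-leaf weight mass concentrated in $m$ centres) to recover the extra colour. One cosmetic remark: the inequality $n(2n+1)\neq 2n+1$ and the verification of the support-vertex condition already hold for $n\ge 2$, so the restriction $n>2$ is not actually needed in your argument.
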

	The Double Star graph, denoted by $B_{n,k}$, is obtained by joining $n$ pendant edges to one end of $K_2$ and $k$ pendant edges to the other end of $K_2$.
	\begin{cor}
		For a positive integer $m$, $n$, and $k$, we have $2m+1\leq\chi_{ld}(mB_{2n+1,2k+1})\leq 2m+2$. 
	\end{cor}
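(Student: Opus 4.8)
The plan is to exhibit $B_{2n+1,2k+1}$ as a tree fulfilling the hypotheses of Theorem~\ref{treescopiesthm} with $s=2$ and $p=4$, and then to read the two bounds off from that theorem together with Remark~\ref{treescopiesremark}. Denote by $u,v$ the two centers joined by the edge of $K_2$, with $u$ carrying the leaves $a_1,\dots,a_{2n+1}$ and $v$ the leaves $b_1,\dots,b_{2k+1}$; then $\deg(u)=2n+2$ and $\deg(v)=2k+2$ are even, so Theorem~\ref{1,2-trees} supplies a $2$-coloring in which both non-leaf vertices are neighborhood balanced colored. Each leaf of $u$ has open neighborhood $\{u\}$ and each leaf of $v$ has $\{v\}$, whence $S=\{\{u\},\{v\}\}$ and $s=2$, so all structural requirements are met.

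For the lower bound I would argue directly, as in the lower-bound part of Theorem~\ref{treescopiesthm}. In any local distance antimagic labeling of $mB_{2n+1,2k+1}$, each leaf has its unique support vertex as its only neighbor, so its weight is the label of that center; the $m(2n+2k+2)$ leaves thus realize exactly $ms=2m$ distinct weights, the labels of the $2m$ centers. The vertex holding the largest label $mN$, where $N=2n+2k+4$, forces a neighbor to have weight strictly exceeding $mN$ and so different from every center-label, producing at least one further color. Hence $\chi_{ld}(mB_{2n+1,2k+1})\ge 2m+1$.

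For the upper bound I would begin from a labeling of the single double star with $p=4$ colors, which exists by Theorem~\ref{bistar}, and to drive the copy construction of Theorem~\ref{treescopiesthm} I would fix the explicit choice $f(u)=1$, $f(v)=2$, spreading $\{3,4,\dots,N\}$ over the leaves. The resulting center weights $w_f(u)=2+\sum_i f(a_i)$ and $w_f(v)=1+\sum_j f(b_j)$ are then large; in particular
\begin{equation*}
	f(u)-w_f(u)+\tfrac{\deg(u)}{2}=n-\textstyle\sum_i f(a_i)<0,
\end{equation*}
and symmetrically for $v$, so the quantity in \eqref{copieseq4} is strictly negative at both centers and \eqref{copieseq4} holds. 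Since the four colors $1,2,w_f(u),w_f(v)$ are pairwise distinct, no leaf weight equals a center weight, giving $c=0$ in Remark~\ref{treescopiesremark}; that remark then yields $\chi_{ld}(mB_{2n+1,2k+1})\le p+(m-1)s+c=4+2(m-1)=2m+2$.

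The hard part will be the remaining hypothesis \eqref{copieseq5} for the adjacent centers, which after inserting the degrees reduces to $m\bigl(w_f(u)-w_f(v)\bigr)\ne(m-1)(n-k)$. Writing $D=\sum_i f(a_i)-\sum_j f(b_j)$, so that $w_f(u)-w_f(v)=1+D$, I must partition $\{3,\dots,N\}$ between the $a$- and $b$-leaves so that $m(1+D)\ne(m-1)(n-k)$ while keeping $D\ne-1$ (which secures $w_f(u)\ne w_f(v)$, hence $p=4$). Because $D=2\sum_i f(a_i)-\bigl(\tfrac{N(N+1)}{2}-3\bigr)$ sweeps out a full step-$2$ arithmetic progression as the partition varies, there are at most two forbidden values to dodge---collapsing to the single value $D=-1$ in the symmetric case $n=k$---so a short counting argument, feasible since the number of attainable sums is ample for all $n,k\ge1$, disposes of this gap and finishes the proof.
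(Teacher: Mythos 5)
Your derivation is correct and follows exactly the route the paper intends for this corollary: verify that $B_{2n+1,2k+1}$ satisfies the hypotheses of Theorem~\ref{treescopiesthm} with $s=2$ and a $p=4$ labeling, get the lower bound $ms+1=2m+1$ from that theorem's argument, and get $2m+2$ from Remark~\ref{treescopiesremark} with no leaf weight coinciding with a center weight. The one step you leave as a sketch --- choosing the partition of $\{3,\dots,N\}$ so that \eqref{copieseq5} holds --- is sound, since the attainable values of $\sum_i f(a_i)$ form a full integer interval of length $(2n+1)(2k+1)+1$, which comfortably avoids the at most two forbidden values.
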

	\section{$\chi_{ld}$ of Direct Product of Graphs}
	The direct product of graphs $G$ and $H$ denoted by $G\times H$, is a graph with vertex set $V(G)\times V(H)$, and two vertices $(g,h)$ and $(g',h')$ are adjacent if $g$ is adjacent to $g'$ in $G$ and $h$ is adjacent to $h'$ in $H$.  The direct product is commutative and associative and has attracted a lot of attention from the research community. Probably the biggest problem in the direct product is the famous Hedetniemi's conjecture:$$\chi(G\times H)=min \{\chi(G),\chi(H)\}.$$\noindent This conjecture suggests that the chromatic number of the direct product of graphs depends only on the properties of one factor and not both. Taking motivation from this conjecture, in this section, we investigate the local distance antimagic chromatic number of the direct product of some specific classes of graphs and see if $\chi_{ld}(G\times H)=min \{\chi_{ld}(G),\chi_{ld}(H)\}$.
	\begin{theorem}\label{directproductchild1}
		Let $G$ be a $2t$-regular graph of order $n$ admitting neighborhood balanced coloring  and $H$ be a $r$-regular graph of order $m$. Then $\chi_{ld}(G\times H)\leq \chi_{ld}(G)$.
	\end{theorem}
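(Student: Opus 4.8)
The plan is to mimic the construction used for copies of graphs in Theorem \ref{copiesthm1}, exploiting the neighborhood balanced coloring of $G$ to ``spread out'' an optimal labeling of $G$ across the $m$ fibers indexed by $V(H)$, while arranging that the induced weights are governed by $G$ alone. Fix a neighborhood balanced coloring $\sigma\colon V(G)\to\{1,-1\}$ of $G$; since $G$ is $2t$-regular, every vertex $g$ has exactly $t$ neighbors colored $1$ and $t$ neighbors colored $-1$, i.e. $n_\sigma^1(g)=n_\sigma^2(g)=t$. Fix also a local distance antimagic labeling $f\colon V(G)\to\{1,\dots,n\}$ realizing $\chi_{ld}(G)$ colors, and enumerate $V(H)=\{h_1,\dots,h_m\}$.

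First I would define a labeling $\phi\colon V(G\times H)\to\{1,\dots,nm\}$ by
\begin{equation*}
\phi(g,h_i)=\begin{cases} m(f(g)-1)+i & \text{if }\sigma(g)=1,\\ mf(g)+1-i & \text{if }\sigma(g)=-1.\end{cases}
\end{equation*}
Exactly as in Theorem \ref{copiesthm1}, the $m$ labels assigned to the fiber $\{(g,h_1),\dots,(g,h_m)\}$ over a fixed $g$ form the block $\{m(f(g)-1)+1,\dots,mf(g)\}$, regardless of the value of $\sigma(g)$. Since $f$ is a bijection onto $\{1,\dots,n\}$, these blocks partition $\{1,\dots,nm\}$, so $\phi$ is a bijection.

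The heart of the argument is the weight computation. Writing $S_i=\sum_{h_j\in N_H(h_i)} j$ and using that $(g,h_i)\sim(g',h_j)$ precisely when $g\sim g'$ and $h_i\sim h_j$, I would split $w_\phi(g,h_i)=\sum_{g'\sim g}\sum_{h_j\sim h_i}\phi(g',h_j)$ according to $\sigma(g')$. Each of the $t$ neighbors $g'$ with $\sigma(g')=1$ contributes $rm(f(g')-1)+S_i$, and each of the $t$ neighbors with $\sigma(g')=-1$ contributes $rmf(g')+r-S_i$, where $r=\deg_H(h_i)$ is constant by the $r$-regularity of $H$. The two occurrences of $S_i$ carry opposite signs and each appears exactly $t$ times by the balanced coloring, so they cancel, leaving
\begin{equation*}
w_\phi(g,h_i)=rm\,w_f(g)+rt(1-m),
\end{equation*}
which is independent of $i$. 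I expect this cancellation to be the main technical point: it is precisely the balance $n_\sigma^1(g)=n_\sigma^2(g)=t$ together with the regularity of $H$ (which makes the index sums $S_i$ appear symmetrically) that forces the fiber index to drop out.

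Finally, properness and the color count follow immediately. If $(g,h_i)\sim(g',h_j)$ then $g\sim g'$ in $G$, so $w_f(g)\neq w_f(g')$ because $f$ is a local distance antimagic labeling; as $w_\phi$ is a nonconstant affine function of $w_f(g)$ alone (the leading coefficient $rm$ is nonzero), $w_\phi(g,h_i)\neq w_\phi(g',h_j)$, and $\phi$ is therefore a local distance antimagic labeling of $G\times H$. Moreover the distinct values of $w_\phi$ are in bijection with the distinct values of $w_f$, of which there are $\chi_{ld}(G)$; hence $G\times H$ is colored with at most $\chi_{ld}(G)$ colors, giving $\chi_{ld}(G\times H)\le\chi_{ld}(G)$.
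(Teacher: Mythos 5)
Your proposal is correct and follows essentially the same route as the paper: the identical fiber labeling $\phi(g,h_i)=m(f(g)-1)+i$ or $mf(g)+1-i$ according to the balanced coloring, and the same resulting weight $rm\,w_f(g)+rt(1-m)$ (the paper writes it as $r[m(w_G(g)-t)+t]$, which is the same quantity). Your explicit derivation of the cancellation of the index sums $S_i$ is in fact more detailed than the paper's, which merely asserts the weight formula.
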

	\begin{proof}
		Let $V(G)=\{x_1,x_2,\dots,x_n\}$ and $V(H)=\{y_1,y_2,\dots,y_{m}\}$ be the vertex sets of graphs $G$ and $H$ respectively. Let $x_{i}^{j}$ be the vertices of $G\times H$ where $i=1,2,\dots,n$ and $j=1,2,\dots,m$. Let $f$ be the local distance antimagic labeling of $G$ that assigns $\chi_{ld}(G)$ distinct weights to vertices of $G$ and $h$ be a neighborhood balanced coloring of $G$. Using the above two mappings we define a bijection  $g\colon V(G\times H)\to \{1,2,\dots,mn\}$ as follows:
		for $i=1,2,\dots,n$ and $j=1,2,\dots,m$,
		\begin{equation*}
			g(x_{i}^j)=\begin{cases}
				m(f(x_{i})-1)+j &\text{if $h(x_{i})=1$},\\
				mf(x_{i})+1-j &\text{if $h(x_{i})=-1$}.
			\end{cases}
		\end{equation*}
		For the weight of vertices we have: for $i=1,2,\dots,n$ and $j=1,2,\dots,m$, $$w_{G\times H}(x_{i}^j)=\sum_{x_p\in N_G(x_i), y_q\in N_H(y_j)}f(x_p^q)=r [m(w_G(x_i)-t)+t].$$ As $f$ is a local distance antimagic labeling, the adjacent vertices of $G\times H$ receive distinct weights, and hence $g$ is a local distance antimagic labeling of $G\times H$ that assigns $\chi_{ld}(G)$ distinct weights to its vertices.
	\end{proof}
	\begin{cor}
		For any regular graph $G$, $\chi_{ld}(G\times K_{2n,2n})=2$.
	\end{cor}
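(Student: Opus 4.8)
The plan is to reduce the statement to Theorem \ref{directproductchild1} by exploiting the commutativity of the direct product, and then to pin down the exact value using the known local distance antimagic chromatic number of complete bipartite graphs. First I would rewrite $G \times K_{2n,2n} \cong K_{2n,2n} \times G$, so that the complete bipartite factor occupies the role of the first factor in Theorem \ref{directproductchild1}. To apply that theorem I must verify its two hypotheses on this first factor: that $K_{2n,2n}$ is $2t$-regular for some $t$, and that it admits a neighborhood balanced coloring. The first is immediate, since $K_{2n,2n}$ is $2n$-regular (take $t = n$). The second follows from the earlier classification of complete multipartite graphs admitting a neighborhood balanced coloring: every part has even size $2n$, so $K_{2n,2n}$ admits such a coloring. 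The factor $G$ is required only to be regular, which is given.

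With the hypotheses in place, Theorem \ref{directproductchild1} yields the upper bound $\chi_{ld}(G \times K_{2n,2n}) = \chi_{ld}(K_{2n,2n} \times G) \leq \chi_{ld}(K_{2n,2n})$. I would then evaluate the right-hand side using Theorem \ref{multipartite}: regarding $K_{2n,2n}$ as a complete multipartite graph with $r = 2$ parts gives $\chi_{ld}(K_{2n,2n}) = 2$. Hence $\chi_{ld}(G \times K_{2n,2n}) \leq 2$.

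For the matching lower bound, I would argue that the product is a graph with at least one edge and no isolated vertices, so at least two colors are forced. Concretely, if $G$ is $\rho$-regular with $\rho \geq 1$, then every vertex of $G \times K_{2n,2n}$ has degree $\rho \cdot 2n \geq 1$, so there are no isolated vertices and a local distance antimagic labeling is well defined; moreover the product has edges, and since such a labeling induces a proper coloring, adjacent vertices receive distinct weights, forcing $\chi_{ld}(G \times K_{2n,2n}) \geq \chi(G \times K_{2n,2n}) \geq 2$. Combining the two bounds gives $\chi_{ld}(G \times K_{2n,2n}) = 2$.

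The main obstacle is not computational but one of bookkeeping: the roles of the two factors in Theorem \ref{directproductchild1} are asymmetric, since only the first factor is required to carry the neighborhood balanced coloring and to be even-regular, so the reduction genuinely relies on commutativity to place $K_{2n,2n}$ in the first slot. A secondary point to check is the degenerate case $\rho = 0$ (an edgeless $G$), which must be excluded for the statement to make sense, since then the product has no edges and $\chi_{ld}$ is not defined.
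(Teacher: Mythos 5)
Your proposal is correct and matches the paper's intended derivation: the corollary is stated without proof as an immediate consequence of Theorem \ref{directproductchild1}, obtained exactly as you describe by using commutativity of the direct product to place $K_{2n,2n}$ (which is $2n$-regular and admits a neighborhood balanced coloring since both parts are even) in the first slot, invoking $\chi_{ld}(K_{2n,2n})=2$ from Theorem \ref{multipartite} for the upper bound, and using $\chi_{ld}\geq\chi\geq 2$ for the lower bound. Your remark about excluding the edgeless case is a reasonable bit of care consistent with the paper's standing assumption that graphs have no isolated vertices.
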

	\begin{theorem}\label{directproductchild2}
		Let $G$ and $H$ be two even-regular graphs admitting neighborhood balanced coloring. Then $\chi_{ld}(G\times H)\leq min\{\chi_{ld}(G), \chi_{ld}(H)\}$.
	\end{theorem}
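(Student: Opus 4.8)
The plan is to derive this statement as an immediate consequence of Theorem \ref{directproductchild1} combined with the commutativity of the direct product. Theorem \ref{directproductchild1} already resolves the asymmetric situation in which only \emph{one} factor is required to be even-regular and to admit a neighborhood balanced coloring, while the other factor need only be regular. Since here both $G$ and $H$ are assumed to be even-regular and to admit a neighborhood balanced coloring, each of them simultaneously qualifies to play \emph{either} role in that theorem, and this is exactly what makes the symmetric bound attainable.

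First I would apply Theorem \ref{directproductchild1} in its stated form, taking $G$ as the even-regular, neighborhood-balanced-colored factor and $H$ as the regular factor; this yields $\chi_{ld}(G\times H)\le \chi_{ld}(G)$. Next, invoking the fact (recorded in the introductory remarks of this section) that the direct product is commutative, so that $G\times H\cong H\times G$, I would apply the same theorem with the two factors interchanged, now treating $H$ as the even-regular, neighborhood-balanced-colored factor and $G$ as the regular factor. The hypotheses are again met because $H$ is even-regular and admits a neighborhood balanced coloring, so this gives $\chi_{ld}(H\times G)\le \chi_{ld}(H)$, and hence $\chi_{ld}(G\times H)\le \chi_{ld}(H)$ after transporting along the isomorphism.

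Combining the two inequalities yields the claimed bound $\chi_{ld}(G\times H)\le \min\{\chi_{ld}(G),\chi_{ld}(H)\}$. There is no genuine technical obstacle in this argument: the only points deserving explicit mention are that the extra symmetry of the hypotheses lets each factor occupy the position reserved for the first coordinate in Theorem \ref{directproductchild1}, and that the isomorphism $G\times H\cong H\times G$ preserves the local distance antimagic chromatic number, so that the second inequality genuinely transfers back to $G\times H$.
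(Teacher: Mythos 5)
Your proposal is correct and is essentially the paper's own argument: the paper's proof of this theorem simply says ``without loss of generality, assume $G$ is the factor with $\chi_{ld}(G)=\min\{\chi_{ld}(G),\chi_{ld}(H)\}$'' and then repeats verbatim the construction from Theorem \ref{directproductchild1}, which is exactly the content of your two applications of that theorem combined with commutativity of the direct product. The only difference is presentational --- you invoke Theorem \ref{directproductchild1} as a black box while the paper re-derives its labeling inline --- so no further comment is needed.
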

	\begin{proof}
		Let the graphs $G$ and $H$ be $2s$ and $2k$-regular with vertex sets as $V(G)=\{x_1,x_2,\dots,x_n\}$ and $V(H)=\{y_1,y_2,\dots,y_{m}\}$ respectively. Let $x_{i}^j$ be the vertices of $G\times H$ where $i=1,2,\dots,n$ and $j=1,2,\dots,m$. Without loss of generality, assume $G$ to be the graph such that $\chi_{ld}(G)=min\{\chi_{ld}(G), \chi_{ld}(H)\}$. Let $f$ be the local distance antimagic labeling of $G$ that assigns $\chi_{ld}(G)$ distinct weights to vertices of $G$ and $h$ be a neighborhood balanced coloring for $G$. Using the above two mappings we define a bijection  $g\colon V(G\times H)\to \{1,2,\dots,mn\}$ as follows:
		for $j=1,2,\dots, m$ and $i=1,2,\dots,n$,
		\begin{equation*}
			g(x_{i}^j)=\begin{cases}
				m(f(x_{i})-1)+j &\text{if $h(x_{i})=1$},\\
				mf(x_{i})+1-j &\text{if $h(x_{i})=-1$}.
			\end{cases}
		\end{equation*}
		For the weight of vertices we have: for $i=1,2,\dots,n$ and $j=1,2,\dots,m$, $$w_{G\times H}(x_{i}^j)=\sum_{x_p\in N_G(x_i), y_q\in N_H(y_j)}f(x_p^q)=r [m(w_G(x_i)-t)+t].$$  As $f$ is a local distance antimagic labeling, the adjacent vertices of $G\times H$ receive distinct weights, and hence $g$ is a local distance antimagic labeling of $G\times H$ that assigns $\chi_{ld}(G)$ distinct weights to its vertices.
	\end{proof}
	\begin{cor}
		If $G$ is any even regular graph admitting neighborhood balanced coloring then $\chi_{ld}(G\times K_{2n,2n})=2$.
	\end{cor}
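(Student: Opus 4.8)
The plan is to recognize this as an immediate consequence of Theorem~\ref{directproductchild2}, once I verify that the second factor $K_{2n,2n}$ satisfies that theorem's hypotheses, and then to pin down the exact value by matching the resulting upper bound with a trivial lower bound.

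First I would check that $H=K_{2n,2n}$ is an even-regular graph admitting a neighborhood balanced coloring. Indeed, $K_{2n,2n}$ is $2n$-regular, hence even-regular; and being a complete multipartite graph with both parts of even order $2n$, the characterization of neighborhood balanced colorings of complete multipartite graphs (with $n_i\equiv 0\pmod 2$ for each part) guarantees that it admits such a coloring. Thus both $G$ and $K_{2n,2n}$ are even-regular graphs admitting neighborhood balanced coloring, so Theorem~\ref{directproductchild2} applies and gives
$$\chi_{ld}(G\times K_{2n,2n})\leq \min\{\chi_{ld}(G),\,\chi_{ld}(K_{2n,2n})\}.$$
Next I would evaluate $\chi_{ld}(K_{2n,2n})$: since $K_{2n,2n}$ is complete multipartite with $r=2$ parts, Theorem~\ref{multipartite} yields $\chi_{ld}(K_{2n,2n})=2$. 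Hence the right-hand side equals $\min\{\chi_{ld}(G),2\}\leq 2$, so $\chi_{ld}(G\times K_{2n,2n})\leq 2$.

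For the lower bound I would note that, writing $G$ as $2s$-regular with $s\geq 1$ (positivity is forced since an even-regular graph with no isolated vertices has positive degree) and using that $K_{2n,2n}$ is $2n$-regular with $n\geq 1$, every vertex $(g,h)$ of the direct product has degree $2s\cdot 2n>0$. In particular the product has no isolated vertices and contains at least one edge. Any local distance antimagic labeling must assign distinct weights to the two endpoints of an edge, so at least two colors occur and $\chi_{ld}(G\times K_{2n,2n})\geq 2$. Combining the two bounds gives the claimed equality $\chi_{ld}(G\times K_{2n,2n})=2$.

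There is essentially no hard obstacle here: the entire computational weight is carried by the labeling $g$ constructed in Theorem~\ref{directproductchild2}. The only points requiring care are (i) confirming that $K_{2n,2n}$ meets both the even-regularity and the neighborhood-balanced-coloring hypotheses, and (ii) checking that the product is non-trivial so that the elementary bound $\chi_{ld}\geq 2$ is genuinely available; once these are in place the result follows by sandwiching.
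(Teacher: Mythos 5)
Your proposal is correct and follows exactly the route the paper intends: verify that $K_{2n,2n}$ is $2n$-regular and admits a neighborhood balanced coloring (both parts even), apply Theorem~\ref{directproductchild2} together with $\chi_{ld}(K_{2n,2n})=2$ from Theorem~\ref{multipartite} for the upper bound, and use the trivial bound $\chi_{ld}\geq 2$ for a graph with at least one edge. Nothing further is needed.
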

	Priyadharshini et al. \cite{Nalliah1} proved that a connected graph has $\chi_{ld}=2$ if and only if it is a complete bipartite graph (see Theorem \ref{child2}). We now study the direct product of a complete bipartite graph with other graphs and see if the local distance antimagic chromatic number of the product is 2. 
	\begin{theorem}\label{directwithbipartite}
		Let $n$, $n_1$, $n_2$ be positive integers and $G$ be a $r$-regular graph of order $n$. Then $\chi_{ld}(G\times K_{n_1,n_2})=2$ if and only if $n_1$ and $n_2$ are of same parity or $n_1$ and $n_2$ are of opposite parity with $n$ odd.
	\end{theorem}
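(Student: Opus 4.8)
The plan is to exploit the rigid structure of the weights in $G \times K_{n_1,n_2}$. Write the bipartition of $K_{n_1,n_2}$ as $A \cup B$ with $|A| = n_1$, $|B| = n_2$, and index the product vertices as $(x_i, a_p)$ and $(x_i, b_q)$. Since a vertex of $A$ is adjacent in $K_{n_1,n_2}$ to \emph{every} vertex of $B$, the neighborhood of $(x_i, a_p)$ is $\{(x_j, b_q) : x_j \in N_G(x_i),\ 1 \le q \le n_2\}$, independent of $p$. Hence every vertex in the ``$A$-layer'' over $x_i$ carries the same weight $W_A(i) = \sum_{x_j \in N_G(x_i)} \beta_j$, where $\beta_j = \sum_{q} f(x_j, b_q)$; symmetrically all $B$-layer vertices over $x_i$ share the weight $W_B(i) = \sum_{x_j \in N_G(x_i)} \alpha_j$ with $\alpha_j = \sum_p f(x_j, a_p)$. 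Colouring by the bipartition of $K_{n_1,n_2}$ shows the product is bipartite, so $\chi_{ld} \ge \chi = 2$ and the whole issue is when two colours suffice. The first step I would record is that in any $2$-colouring the edges run only between $A$-layers and $B$-layers, so that on each component of $G$ the admissible colour patterns reduce to either (i) all $A$-layers one colour and all $B$-layers the other (``uniform''), or (ii), when $G$ is bipartite, an alternation tied to the bipartition of $G$.

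For the \emph{if} direction I would build a uniform labelling. It suffices to partition $\{1, \dots, N\}$, $N = n(n_1+n_2)$, into $n$ blocks of size $n_1$ of common sum $\alpha$ and $n$ blocks of size $n_2$ of common sum $\beta$ with $\alpha \ne \beta$: assigning the $i$-th size-$n_1$ block to the $A$-layer over $x_i$ and the $i$-th size-$n_2$ block to its $B$-layer makes every $\alpha_i = \alpha$ and $\beta_i = \beta$, whence by $r$-regularity $W_A(i) = r\beta$ and $W_B(i) = r\alpha$ for all $i$, yielding exactly the two colours $r\alpha \ne r\beta$. Such an equal-sum partition requires $n \mid \tfrac{N(N+1)}{2}$, i.e. $\tfrac{(n_1+n_2)(n(n_1+n_2)+1)}{2} \in \mathbb{Z}$; a parity check shows this holds precisely when $n_1,n_2$ have the same parity, or have opposite parity with $n$ odd---exactly the hypothesis. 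The remaining routine step is to exhibit the partition when the divisibility holds, via the standard balanced grouping of consecutive integers.

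The hard part is the \emph{only if} direction, and this is where I expect the genuine obstacle. The clean sub-case is $G$ connected and non-bipartite: then the product is connected, only the uniform pattern survives, and the equations $\sum_{x_j \in N_G(x_i)} \beta_j = c_1$ (the common $A$-layer colour) essentially force the layer sums to be constant---the all-ones vector is the $r$-eigenvector of the adjacency matrix, so when that matrix is nonsingular the constant solution is the only one---returning us to $n \mid \tfrac{N(N+1)}{2}$ and hence to the parity condition. The difficulty is that for bipartite $G$ the product decomposes into several components (for instance $C_4 \times K_{n_1,n_2}$ splits into complete bipartite graphs), and on such a host one can keep each $W_A(i)$ constant while the individual layer sums $\beta_j$ vary, so the total-sum divisibility obstruction can be sidestepped and two colours may reappear outside the stated parity range. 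Controlling these non-uniform labellings is therefore the crux: the route I would attempt is a component-wise counting argument invoking Theorem~\ref{multipartite} on the complete-bipartite pieces together with the copies results, and I would expect this step to demand either an added hypothesis on $G$ (connected and non-bipartite) or a considerably more delicate case analysis than the aggregate sums alone provide.
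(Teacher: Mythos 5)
Your ``if'' direction is, in substance, the paper's construction seen from one level up: the paper realizes exactly your equal-sum partition, by pairing consecutive labels column-by-column when $n_1$ and $n_2$ are both even, by an ad hoc labeling when both are odd, and by an $n\times n_1$ magic rectangle combined with the paired labeling when $n$ is odd and the parities differ; your divisibility analysis of $\tfrac{(n_1+n_2)(n(n_1+n_2)+1)}{2}$ correctly recovers the stated parity condition. The one step I would not accept as ``routine'' is the both-odd case: blocks of odd size cannot be produced by the standard pairing of consecutive integers, and the paper needs a genuinely special construction there (note its separate handling of the columns $j=1,2,3$). So the upper bound is sound in outline, but you still owe the existence of the partition when $n_1$ and $n_2$ are both odd.

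The genuine gap is the ``only if'' direction, and you have diagnosed it accurately --- but you should be aware that the paper does not close it either: its entire argument for that direction is the single instance $K_2\times K_{2,3}\cong 2K_{2,3}$ with $\chi_{ld}=3$, which refutes the conclusion for one choice of $G$, $n_1$, $n_2$ and says nothing about an arbitrary $r$-regular $G$ of even order. Your own sketch has the holes you flag, plus one more: even when the product is connected (which already requires $G$ non-bipartite, by Weichsel's theorem), so that every $2$-coloring puts all $A$-layers in one class and all $B$-layers in the other, summing the weight equations over $i$ only yields $c_1+c_2=\tfrac{r(n_1+n_2)(n(n_1+n_2)+1)}{2}$, which is an integer whenever $r$ is even; the divisibility obstruction therefore bites only after you have forced the individual layer sums $\alpha_j,\beta_j$ to be constant, and that needs $A(G)$ nonsingular, which is not part of the hypothesis. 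In short, neither your proposal nor the paper proves the ``only if'' half as stated; what is actually established is that the parity condition is sufficient, and that the converse fails for at least one graph in the excluded range.
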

	\begin{proof}
		Without loss of generality, assume $n_1\leq n_2$. Further, let $V(G)=\{x_1,x_2,\dots,x_n\}$ and $V(K_{n_1,n_2})=\{y_1,y_2,\dots,y_{n_1}\}\cup\{z_1,z_2,\dots,z_{n_2}\}$ be the vertex sets of graphs $G$ and $K_{n_1,n_2}$ respectively. For $i=1,2,\dots,n$, let $y_{i}^j$ be the vertices of $G\times K_{n_1,n_2}$ that correspond with vertices $y_j$ of $K_{n_1,n_2}$ where $j=1,2,\dots,n_1$ and  $z_{i}^j$ be the vertices of $G\times K_{n_1,n_2}$ that correspond with vertices $z_j$ of $K_{n_1,n_2}$ where $j=1,2,\dots,n_2$. Throughout the proof, we shall assume that the indent $i$ takes values from $1,2,\dots,n$.
		\begin{description}
			\item[ Case 1.]  $n_1$ and $n_2$ are both even.\\
			Define $f\colon V(G\times K_{n_1,n_2})\to \{1,2,\dots,(n_1+n_2)n\}$ by
			\begin{align*}
				&f(y_{i}^j)=
				\begin{cases}
					(j-1)n+i &\text{if $j=1,3,5,\dots,n_1-1$,}\\
					jn+1-i &\text{if $j=2,4,6,\dots,n_1$,}
				\end{cases}\\
				&f(z_{i}^j)=
				\begin{cases}
					(j-1)n+i+nn_1 &\text{if $j=1,3,5,\dots,n_2-1$,}\\
					jn+1-i+nn_1 &\text{if $j=2,4,6,\dots,n_2$.}
				\end{cases}
			\end{align*}
			For the weight of vertices, we have,
			\begin{align*}
				w(y_{i}^j)&=\sum_{x_q\in N_G(x_i)}\sum_{p=1}^{n_2}f(z_q^p)\\&=\frac{rn_2(nn_2+1)}{2}+rnn_1n_2\quad \text{for $j=1,2,\dots,n_1$ and}\\
				w(z_{i}^j)&=\sum_{x_q\in N_G(x_i)}\sum_{p=1}^{n_1}f(y_q^p)\\&=\frac{rn_1(nn_1+1)}{2}\quad \text{for $j=1,2,\dots,n_2$}.
			\end{align*}
            Clearly, both the weights are distinct and therefore $f$ is a local distance antimagic labeling of $G\times K_{n_1,n_2}$ that assigns 2 weights to its vertices.
			\item[ Case 2.]  $n_1$ and $n_2$ are both odd.\\
			Define $f\colon V(G\times K_{n_1,n_2})\to \{1,2,\dots,(n_1+n_2)n\}$ by
			\begin{align*}
				&f(y_{i}^j)=
				\begin{cases}
					n(j-1)+i &\text{for $j=1,2$},\\
					4n-2(i-1) &\text{for $j=3$,}\\
					n(j+1)-(i-1) &\text{for $j=5,7,\dots,n_1$, }\\
					nj+i &\text{for $j=4,6,\dots,n_1-1$. }\\
				\end{cases}\\
				&f(z_{i}^j)=
				\begin{cases}
					4n-(2i-1) &\text{for $j=1$, }\\
					n(n_1+2)+i&\text{for $j=3$, }\\
					n(n_1+j)-(i-1) &\text{for $j=5,7,\dots,n_2$,}\\
					n(n_1-1+j)+i &\text{for $j=2,4,\dots,n_2-1$.}\\
				\end{cases}
			\end{align*}
			For the weight of vertices, we have,
			\begin{align*}
				w(y_{i}^j)&=\sum_{x_q\in N_G(x_i)}\sum_{p=1}^{n_2}f(z_q^p)\\&=\frac{r}{2}\big(2nn_1(n_2-1)+n_2(nn_2+1)+5n-1\big) \quad \text{for $j=1,2,\dots,n_1$ and}\\
				w(z_{i}^j)&=\sum_{x_q\in N_G(x_i)}\sum_{p=1}^{n_1}f(y_q^p)\\&=\frac{r}{2}\big(nn_1(n_1+2)-5n+n_1+1 \big) \quad \text{for $j=1,2,\dots,n_2$}.
			\end{align*}
            Clearly, both the weights are distinct and therefore $f$ is a local distance antimagic labeling of $G\times K_{n_1,n_2}$ that assigns 2 weights to its vertices.
			\item[ Case 3.] $n$ is odd and $n_1$ and $n_2$ are of opposite parity.\\
            Without loss of generality let us assume $n_1$ is odd and $n_2$ is even.
			As $n\equiv n_1(\bmod\ 2)$, we have a $n\times n_1$ magic rectangle $A$ with entries $a_{i,j}$ having constant row sum as $\frac{n_1(nn_1+1)}{2}$.
			Define $f\colon V(G\times K_{n_1,n_2})\to \{1,2,\dots,(n_1+n_2)n\}$ by $f(y_{i}^j)=a_{i,j}$ where $j=1,2,\dots,n_1$ and 
			\begin{equation*}
				f(z_{i}^j)=
				\begin{cases}
					(j-1)n+i+nn_1 &\text{if $j=1,3,5,\dots,n_2-1$,}\\
					jn+1-i+nn_1 &\text{if $j=2,4,6,\dots,n_2$.}
				\end{cases}
			\end{equation*}
			For the weight of vertices, we have,
			\begin{align*}
				w(y_{i}^j)&=\sum_{x_q\in N_G(x_i)}\sum_{p=1}^{n_2}f(z_q^p)\\&=\frac{rn_2(nn_2+1)}{2}+rnn_1n_2\quad \text{for $j=1,2,\dots,n_1$ and}\\
				w(z_{i}^j)&=\sum_{x_q\in N_G(x_i)}\sum_{p=1}^{n_1}f(y_q^p)\\&=\frac{rn_1(nn_1+1)}{2}\quad \text{for $j=1,2,\dots,n_2$}.
			\end{align*}
            Clearly, both the weights are distinct and therefore $f$ is a local distance antimagic labeling of $G\times K_{n_1,n_2}$ that assigns 2 weights to its vertices.
\item[ Case 4.] $n$ is even and $n_1$ and $n_2$ are of opposite parity.\\ 
The result is not true if $n$ is even and $n_1$ and $n_2$ are of opposite parity. Priyadharshini et al. \cite{Nalliah1} proved that $\chi_{ld}(2K_{2,3})=3$ and as $K_2\times K_{2,3}\cong 2 K_{2,3}$, we have $\chi_{ld}(K_2\times K_{2,3})=3$.
            
		\end{description}
		
	\end{proof}

	Weichsel et al. \cite{dirconnected} proved that the direct product of graphs $G$ and $H$ is connected if and only if both $G$ and $H$ are connected and at least one of them is non-bipartite. This implies that the direct product of odd cycle $C_{2n+1}$ with $K_{n_1,n_2}$ is connected. From Theorem \ref{directwithbipartite}, we have $\chi_{ld}(C_{2n+1}\times K_{n_1,n_2})=2$. However, one can easily verify that the graph $C_{2n+1}\times K_{n_1,n_2}$ is not a complete bipartite graph. This provides a counter-example for the Theorem \ref{child2}.

	\section{$\chi_{ld}$ of Lexicographic Product of Graphs}
	First, we define the lexicographic product of graphs. The lexicographic product of $G$ and $H$ is a graph having vertex set $V(G)\times V(H)$, in which two vertices $(g,h)$ and $(g^{\prime},h^{\prime})$ are adjacent either if $g$ is adjacent to $g^{\prime}$ in $G$ or $g=g^{\prime}$ and $h$ is adjacent to $h^{\prime}$ in $H$.
	The lexicographic product of graphs is also called the composition of graphs and is denoted by $G[H]$. Next, we present a simple technique to construct the graph $G[H]$ from the graphs $G$ and $H$. Let $V(G)=\{x_{1},x_{2},\hdots, x_{n}\}$.  To construct $G[H]$,  corresponding to each vertex $x_{i}$ of $G$, we take a copy $H_{i}$ of $H$ for $i=1,2,\dots,n$, and if the vertices $x_{i}$ and $x_{j}$ are adjacent, then we join each vertex of $H_{i}$ to all the vertices of $H_{j}$.   Geller et al. \cite{lexichromatic}, proved that for a bipartite graph $G$ and for any graph $H$, $\chi(G[H])=2\ \chi(H)$. In this section, we study $\chi_{ld}(G[H]) $ for various graph classes and obtain almost similar results as for $\chi(G[H])$.
	\begin{theorem}\label{lexibipartitethm}
		Let $G$ be a $k$-regular $(k\geq 2)$ bipartite graph of order $m$ and $H$ be a $2t$-regular graph of order $n$ admitting  neighborhood balanced coloring, then $\chi_{ld}(G[H])\leq 2\ \chi_{ld}(H)$.
	\end{theorem}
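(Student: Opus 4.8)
The plan is to carry the neighborhood-balanced labeling of Theorem~\ref{copiesthm1} over to the two color classes of $G$ \emph{separately}, so that one class produces the ``low'' weights and the other the ``high'' weights. Write the bipartition of $G$ as $V(G)=X\cup Y$; since $G$ is $k$-regular bipartite, $|X|=|Y|=\tfrac m2$, and I list $X=\{x_1,\dots,x_{m/2}\}$, $Y=\{x_{m/2+1},\dots,x_m\}$. Let $H_i$ be the copy of $H$ sitting over $x_i$, let $V(H)=\{y_1,\dots,y_n\}$, and write $y_\ell^i$ for the copy of $y_\ell$ in $H_i$. I fix a local distance antimagic labeling $f$ of $H$ realizing $\chi_{ld}(H)$ weights and a neighborhood balanced coloring $h$ of $H$; because $H$ is $2t$-regular and admits an NBC, every vertex has exactly $t$ blue and $t$ red neighbors, and the color classes are globally balanced with $\tfrac n2$ vertices each (so $n$ is even). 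With $s=\tfrac m2$ I would set
\[
g(y_\ell^i)=\begin{cases} s\big(f(y_\ell)-1\big)+i,& h(y_\ell)=1,\\[2pt] sf(y_\ell)+1-i,& h(y_\ell)=-1,\end{cases}\qquad 1\le i\le \tfrac m2,
\]
on the $X$-copies and the same rule shifted up by $\tfrac{mn}{2}$ (with index $i'=i-\tfrac m2$) on the $Y$-copies; exactly as in Theorem~\ref{copiesthm1} this is a bijection onto $\{1,\dots,mn\}$.

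Next I would compute the weights, reusing the identity from Theorem~\ref{copiesthm1}. Since each vertex of $H$ has $t$ neighbors of each color, the contribution of the neighbors of $y_\ell^i$ lying \emph{inside} $H_i$ is $s\,w_f(y_\ell)+t(1-s)$ on the $X$-side, and the same plus $tmn$ on the $Y$-side (each of the $2t$ internal neighbors gains the shift $\tfrac{mn}{2}$); crucially this is independent of the copy index. Because the color classes of $H$ are balanced, every $X$-copy has one common label sum $\Sigma_X$ and every $Y$-copy the common sum $\Sigma_Y=\Sigma_X+\tfrac{mn^2}{2}$. As $G$ is bipartite and $k$-regular, all $k$ neighbors of an $X$-vertex lie in $Y$ and vice versa, so the cross contribution to $w_g(y_\ell^i)$ is $k\Sigma_Y$ when $i\in X$ and $k\Sigma_X$ when $i\in Y$. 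Adding the internal and cross parts, the weight of $y_\ell^i$ depends only on $w_f(y_\ell)$ and on the side of $i$.

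Finally I would check the antimagic property and count colors. Adjacent vertices in a common copy are $y_\ell^i,y_{\ell'}^i$ with $y_\ell\sim_H y_{\ell'}$, and their weights differ by $s\big(w_f(y_\ell)-w_f(y_{\ell'})\big)\ne0$ because $f$ is local distance antimagic on $H$. Every other edge joins an $X$-copy to a $Y$-copy, and for such a pair $w_g(y_\ell^i)-w_g(y_p^j)=s\big(w_f(y_\ell)-w_f(y_p)\big)+\tfrac{kmn^2}{2}-tmn$. The spread of $w_f$ on $H$ is at most $2t(n-2t)$, whence $\big|s(w_f(y_\ell)-w_f(y_p))\big|\le mt(n-2t)$, while $k\ge2$ gives $\tfrac{kmn^2}{2}-tmn\ge mn^2-tmn$, and the two bounds combine to $\tfrac{kmn^2}{2}-tmn-mt(n-2t)\ge m\big((n-t)^2+t^2\big)>0$. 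Thus every $X$-weight strictly exceeds every $Y$-weight: no cross edge is monochromatic, and the two weight families lie in disjoint ranges, each carrying at most $\chi_{ld}(H)$ distinct values. Hence $g$ is a local distance antimagic labeling using at most $2\,\chi_{ld}(H)$ colors, which is the desired bound. I expect this last separation inequality to be the main obstacle: it is exactly where the hypotheses $k\ge2$ and the $2t$-regularity of $H$ (forcing $2t\le n-1$) are needed, guaranteeing that the fixed offset between the two sides dominates the internal spread of the $H$-weights.
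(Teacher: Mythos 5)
Your proposal is correct and follows essentially the same route as the paper: the identical NBC-interleaved labeling with the $X$-copies occupying $\{1,\dots,\tfrac{mn}{2}\}$ and the $Y$-copies the shifted block, the same copy-independent weight formula, and the same separation argument comparing the fixed cross-offset against the maximal spread $2t(n-2t)$ of $w_f$ on $H$. The only cosmetic difference is that you package the final inequality as $m\big((n-t)^2+t^2\big)>0$, whereas the paper compares $n(nk-2t)$ directly with $2t(n-2t)$.
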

	\begin{proof}
		Let $V(G)=A\cup B$, where $A=\{x_1,x_2,\dots,x_s\}$ and $B=\{y_1,y_2,\dots,y_s\}$ are the two partite sets and $V(H)=\{v_1,v_2,\dots,v_n\}$ be the vertex sets of graphs $G$ and $H$ respectively. For $i=1,2,\dots,n$, let $x_{i}^j $ and $y_{i}^j$ be the vertices of $G[H]$ that correspond with vertices $x_j$ and $y_j$ respectively for $j=1,2,\dots,s$ of $G$. 
		Let $f$ be a local distance antimagic labeling of $H$ that assigns $\chi_{ld}(H)$ distinct weights to vertices of $H$ and $h$ be a neighborhood balanced coloring of vertices of $H$.  Using the mappings $f$ and $h$, we define a bijection  $g\colon V(G[H])\to \{1,2,\dots,n(r+s)\}$ as follows:
		for $j=1,2,\dots, s$ and $i=1,2,\dots,n$,
		\begin{align*}
			&	g(x_{i}^j)=\begin{cases}
				s(f(v_{i})-1)+j &\text{if $h(v_{i})=1$, }\\
				sf(v_{i})+1-j &\text{if $h(v_{i})=-1$,  }
			\end{cases}\\
			&	g(y_{i}^j)=\begin{cases}
				s(f(v_{i})-1)+sn+j &\text{if $h(v_{i})=1$, }\\
				sf(v_{i})+1+sn-j &\text{if $h(v_{i})=-1$. }
			\end{cases}
		\end{align*}
		Note that, for any $j=1,2,\dots,s$,
		\begin{align*}
			&\sum_{i=1}^{n}g(x_{i}^j)=s\bigg[\sum_{i=1}^{n}f(v_i)-\frac{n}{2}\bigg]+\frac{n}{2}=\frac{n(ns+1)}{2},\\
			&\sum_{i=1}^{n}g(y_{i}^j)=s\bigg[\sum_{i=1}^{n}f(v_i)-\frac{n}{2}\bigg]+\frac{n}{2}+sn^2=\frac{n(ns+1)}{2}+sn^2.
		\end{align*}
		For the weight of vertices, we have: for $i=1,2,\dots,n$ and $j=1,2,\dots,s,$
		\begin{align*}
		 	w(x_{i}^j)&=\sum_{y_p\in N_G(x_j)}\sum_{l=1}^{n}g(y_l^p)+\sum_{v_q\in N_H(v_i)}g(x_q^j)\\&=\bigg[\frac{n(ns+1)}{2}+sn^2\bigg]k+s(w(v_i)-t)+t,\\
		w(y_{i}^j)&=\sum_{x_p\in N_G(y_j)}\sum_{l=1}^{n}g(x_l^p)+\sum_{v_q\in N_H(v_i)}g(y_q^j)\\&=\bigg[\frac{n(ns+1)}{2}\bigg]k+s(w(v_i)-t)+(2sn+1)t.
		\end{align*}
        Note that, for any fixed $j$, where $j=1,2,\dots,s$, as $f$ is a local distance antimagic labeling of $H$, the adjacent vertices among $x_i^j$'s, have distinct weights. Similarly, for any fixed $j$, the adjacent vertices among $y_i^j$'s, have distinct weights.
	Also, if $w(x_{i}^k)=w(y_{j}^m)$, then we get $s(w(v_j)-w(v_i))=sn(nk-2t)$, which implies, $w(v_j)-w(v_i)=n(nk-2t)$.
		But $w(v_j)\leq n+(n-1)+\dots+(n-(2t-1))=2tn-(2t-1)t$ and $w(v_i)\geq t(2t+1)$ and hence $w(v_j)-w(v_i)\leq 2t(n-2t)$. Therefore, we see that if  $w(x_{ik})=w(y_{jm})$, then $	w(v_j)-w(v_i)$ is more than the maximum possible difference, and hence it is not possible that  $w(x_{i}^k)=w(y_{j}^m)$. Thus, $g$ is a local distance antimagic labeling of $G[H]$ that assigns $2\ \chi_{ld}(H)$ distinct weights to its vertices. 
	\end{proof}
	\begin{cor}
		For a regular bipartite graph $G$, and $k$-partite graph $K_{2n,2n,2n,\dots,2n}$, $\chi_{ld}(G[K_{2n,2n,2n,\dots,2n}])=2k.$
	\end{cor}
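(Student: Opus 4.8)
The plan is to obtain the value $2k$ by squeezing $\chi_{ld}(G[H])$ between matching upper and lower bounds. Throughout I write $H=K_{2n,2n,\dots,2n}$ for the complete $k$-partite graph with every part of size $2n$, and I take $G$ to be a $d$-regular bipartite graph with $d\geq 2$, so that the hypotheses of Theorem \ref{lexibipartitethm} are met.

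First, for the upper bound, I would verify that $H$ satisfies every hypothesis Theorem \ref{lexibipartitethm} demands of its second factor. Since each vertex of $H$ is adjacent to all $2n(k-1)$ vertices lying outside its own part, $H$ is regular of degree $2n(k-1)$; as this degree is even, $H$ is $2t$-regular with $t=n(k-1)$. Moreover, because every part of $H$ has even cardinality $2n$, the characterization of complete multipartite graphs admitting a neighborhood balanced coloring guarantees that $H$ admits one. Finally, by Theorem \ref{multipartite} we have $\chi_{ld}(H)=k$. Applying Theorem \ref{lexibipartitethm} to the pair $(G,H)$ then yields $\chi_{ld}(G[H])\leq 2\,\chi_{ld}(H)=2k$.

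For the lower bound I would fall back on the general inequality $\chi_{ld}(G[H])\geq \chi(G[H])$ noted in the introduction. Since $G$ is bipartite, the theorem of Geller et al.\ \cite{lexichromatic} gives $\chi(G[H])=2\,\chi(H)$, and the chromatic number of the complete $k$-partite graph $H$ is exactly $k$; hence $\chi(G[H])=2k$ and therefore $\chi_{ld}(G[H])\geq 2k$. Combining the two bounds gives $\chi_{ld}(G[H])=2k$.

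The argument is essentially a bookkeeping exercise in checking hypotheses rather than a computation, so the only real care needed is in the verifications feeding the upper bound: confirming that the common part size $2n$ forces $H$ to be \emph{even}-regular (not merely regular), so that the ``$2t$-regular'' requirement of Theorem \ref{lexibipartitethm} is literally satisfied, and confirming that this same parity condition is precisely what the neighborhood-balanced-coloring characterization needs. One should also record the standing assumption that $G$ is $d$-regular bipartite with $d\geq 2$, inherited from Theorem \ref{lexibipartitethm}; under that assumption both bounds are available and the equality follows.
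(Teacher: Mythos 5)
Your proposal is correct and matches the argument the paper intends for this corollary: the upper bound $\chi_{ld}(G[H])\leq 2\chi_{ld}(H)=2k$ comes from Theorem \ref{lexibipartitethm} together with $\chi_{ld}(K_{2n,\dots,2n})=k$ (Theorem \ref{multipartite}) and the fact that all parts being even makes $H$ even-regular and neighborhood balanced colorable, while the lower bound comes from $\chi_{ld}\geq\chi$ and the Geller--Stahl formula $\chi(G[H])=2\chi(H)=2k$ for bipartite $G$. Your remark that the degree hypothesis $k\geq 2$ of Theorem \ref{lexibipartitethm} must be carried into the corollary is a fair point of care that the paper leaves implicit.
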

	If the graph $G$ is not a regular bipartite graph, then the above result does not need to be true for some bipartite graphs. We present an example below.
	\begin{theorem}
		For positive integers $c$, $d$ and even positive integers $n_1$, $n_2$, \dots, $n_k$ with $n_1\leq n_2\leq n_3\dots\leq n_k$, we have $2k<\chi_{ld}(B_{c,d}[K_{n_1,n_2,\dots,n_k}])\leq 3k$.
	\end{theorem}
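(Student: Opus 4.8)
The plan is to prove the two bounds separately. Write $u,v$ for the two centres of $B_{c,d}$, with $u$ carrying the pendant vertices $a_1,\dots,a_c$ and $v$ the pendant vertices $b_1,\dots,b_d$, and $u\sim v$. Write $H=K_{n_1,\dots,n_k}$ with parts $P_1,\dots,P_k$ and $n=\sum_i n_i$ vertices. In $B_{c,d}[H]$ each vertex $w$ of $B_{c,d}$ is replaced by a copy $H_w$, and a complete join is placed between copies indexed by adjacent vertices of $B_{c,d}$. For a copy $H_w$ let $S_w=\sum_{x\in H_w}g(x)$ be its total label sum and $\sigma^w_s$ the sum of labels lying in part $P_s$ of $H_w$, and let $W_u,W_v,W_A,W_B$ denote the sets of weights occurring on $H_u$, $H_v$, $\bigcup_i H_{a_i}$, $\bigcup_j H_{b_j}$ respectively. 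Since $B_{c,d}$ is bipartite, Geller's theorem gives $\chi(B_{c,d}[H])=2\chi(H)=2k$, so $\chi_{ld}\ge 2k$. To make this strict I would argue by contradiction: suppose some local distance antimagic labeling uses exactly $2k$ weights. Within each copy the $k$ parts are mutually adjacent, so each of $W_u,W_v,W_A,W_B$ has size at least $k$; and since $H_u$ is completely joined to $H_v$ and to every $H_{a_i}$, we get $W_u\cap(W_v\cup W_A)=\emptyset$, and symmetrically $W_v\cap(W_u\cup W_B)=\emptyset$. As $W_u,W_v$ are disjoint of size $\ge k$ each, the equality $|W_u\cup W_v\cup W_A\cup W_B|=2k$ forces $|W_u|=|W_v|=k$ and $W_A\cup W_B\subseteq W_u\cup W_v$, which together with the disjointness yields $W_A=W_v$ and $W_B=W_u$.

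The contradiction comes from comparing largest weights, and this is the crux of the whole theorem. A vertex of $H_{a_i}$ in part $P_s$ has weight $(S_{a_i}-\sigma^{a_i}_s)+S_u$ with $\sigma^{a_i}_s>0$, so $\max W_A<\max_i S_{a_i}+S_u$; a vertex of $H_v$ in part $P_s$ has weight $(S_v-\sigma^{v}_s)+S_u+\sum_j S_{b_j}$, and since $k\ge 2$ forces $S_v-\sigma^v_s>0$, we get $\max W_v>S_u+\sum_j S_{b_j}$. From $W_A=W_v$ I then obtain $\max_i S_{a_i}>\sum_j S_{b_j}\ge\max_j S_{b_j}$. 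The symmetric computation applied to $W_B=W_u$ gives $\max_j S_{b_j}>\sum_i S_{a_i}\ge\max_i S_{a_i}$. These two strict inequalities are incompatible, so $2k$ weights are impossible and $\chi_{ld}(B_{c,d}[H])>2k$.

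For the upper bound I would imitate the construction of Theorem~\ref{lexibipartitethm}. Fix a neighborhood balanced coloring $h$ of $H$ (colour each even part half $1$ and half $-1$, so the colour classes are balanced) and a local distance antimagic labeling $f$ of $H$ realising its $k$ weights. With $m=c+d+2$, assign the copies distinct indices $r\in\{1,\dots,m\}$ and set $g(H_r,v_i)=m(f(v_i)-1)+r$ when $h(v_i)=1$ and $g(H_r,v_i)=mf(v_i)+1-r$ when $h(v_i)=-1$. The balance of $h$ makes every copy have the same total sum $S$, and the neighborhood-balanced collapse used in the proof of Theorem~\ref{copiesthm1} reduces the within-copy weight of a part-$P_s$ vertex to a single value $W^{in}(P_s)=m\,w_f(P_s)+\tfrac{1-m}{2}\deg(P_s)$, the same in every copy. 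Hence every pendant vertex has weight $W^{in}(P_s)+S$, every vertex of $H_u$ has weight $W^{in}(P_s)+(c+1)S$, and every vertex of $H_v$ has weight $W^{in}(P_s)+(d+1)S$. These form three families of at most $k$ values each, so $g$ uses at most $3k$ weights.

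It then remains to verify that adjacent vertices get distinct weights, and this verification is the delicate part of the upper bound. Within a copy and across the $H_u$–$H_{a_i}$ and $H_v$–$H_{b_j}$ joins, distinctness reduces to the $W^{in}(P_s)$ being pairwise distinct and to the three families being separated, which is secured by choosing $f$ so that its part-sums $\sigma_s$ are balanced enough that $S$ exceeds the spread of the $W^{in}(P_s)$. The genuine obstacle is the complete join between $H_u$ and $H_v$, requiring $W^{in}(P_s)+(c+1)S\ne W^{in}(P_{s'})+(d+1)S$ for all $s,s'$: for $c\ne d$ this holds automatically, but the symmetric case $c=d$ produces equal weights on the diagonal $s=s'$ and must be repaired by breaking the $H_u$/$H_v$ symmetry with a bounded perturbation of the labels on the two central copies that splits their weight families while keeping the two pendant families merged, so that the total count remains at most $3k$. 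Combining the strict lower bound with this construction gives $2k<\chi_{ld}(B_{c,d}[K_{n_1,\dots,n_k}])\le 3k$.
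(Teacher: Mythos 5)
Your architecture matches the theorem's two-part structure, and your lower bound is complete and arguably cleaner than the paper's. Both arguments begin identically, forcing $W_A=W_v$ and $W_B=W_u$ from the disjointness of completely joined copies; but the paper then substitutes one weight equation into another until it reads ``$0=$ a sum of positive label sums,'' whereas you compare the maxima of the identified weight sets and arrive at the incompatible chain $\max_i S_{a_i}<\max_j S_{b_j}<\max_i S_{a_i}$. That works for $k\ge 2$; for $k=1$ your step $S_v-\sigma^v_s>0$ fails, but then $\max W_A=\max W_v$ already reads $S_u=S_u+\sum_j S_{b_j}$, so the degenerate case is immediate. On the upper bound, one of your worries is unnecessary: every within-copy weight $W^{\mathrm{in}}(P_s)=S-\sigma_s$ lies strictly between $0$ and $S$, so the spread of these $k$ values is automatically less than $S$, and no ``balancing'' of $f$ is needed to separate the pendant family from the $H_u$ family, or the $H_u$ family from the $H_v$ family when $c\ne d$. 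Your uniform NBC-spread over all $c+d+2$ copies is a genuinely different (and tidier) construction from the paper's, which instead gives the two central copies the labels $1,\dots,2a$ and the pendant copies the large labels.

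The genuine gap is the case $c=d$. There your $H_u$ and $H_v$ families coincide value-by-value: a vertex of $H_u$ in part $P_s$ and a vertex of $H_v$ in part $P_s$ both get $W^{\mathrm{in}}(P_s)+(c+1)S$, and these two copies are completely joined, so the labeling as defined is not local distance antimagic. You correctly diagnose that the two central copies must be desymmetrized, but you only assert that ``a bounded perturbation'' exists; you neither exhibit it nor verify the three things it must do simultaneously: make all $k^2$ cross-comparisons $\sigma^u_s\ne\sigma^v_{s'}$ hold (not just the diagonal ones), preserve $S_u=S_v$ so the two pendant families stay merged and the count stays at $3k$, and keep the within-copy weights distinct. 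The paper resolves this concretely by cyclically permuting one designated label per part among the parts of $H_v$, which fixes the total $S_v$ while shifting every part-sum of $H_v$; some such explicit choice, with those checks, is required before your upper bound is actually proved for $c=d$.
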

	\begin{proof}
		Let $u$ and $v$ be the two support vertices having leaves as $u_1,u_2,\dots,u_c$ and $v_1,v_2,\dots,v_d$ respectively. Let $V(K_{n_1, n_2, \dots, n_k})=\bigcup_{i=1}^{k}\{x_{i,j}:\ j=1,2,\dots,n_i\}$ be the vertex set of $K_{n_1, n_2, \dots, n_k}$. For $i=1,2,\dots,k$ and $j=1,2,\dots,n_i$, let $u_{i,j}$ and $v_{i,j}$ be the vertices of $B_{c,d}[H]$ in the copy of $H$ due to vertex $u$ and $v$ respectively. Also, for $l=1,2,\dots,c$, let $u_{i,j}^l$ and $v_{i,j}^l$ be the vertices of $B_{c,d}[H]$, that correspond with vertices $u_l$ and $v_l$  respectively of $B_{c,d}$.  We now give a local distance antimagic labeling for $B_{c,d}[H]$ that assigns $3k$ distinct weights to its vertices. We give the labeling in two cases depending upon whether $c$ and $d$ are equal or not.
		\begin{description}
			\item[Case 1:] If $c\not=d$.\\
			Define a bijection $f\colon V(B_{c,d}[H])\to \{1,2,\dots,(c+d+2)a\}$ as follows:
			for $i=1,2,\dots,k$,
			\begin{equation*}
				f(u_{i,j})=
				\begin{cases}
					\displaystyle 2\sum_{p=0}^{i-1}n_p+2j-1 &\text{if $j=1,3,5,\dots,n_i-1$,}\\
					\displaystyle	2\sum_{p=0}^{i-1}n_p+2j &\text{if $j=2,4,6,\dots,n_i$,}\\
				\end{cases}
			\end{equation*}
			\begin{equation*}
				f(v_{i,j})=
				\begin{cases}
					\displaystyle 2\sum_{p=0}^{i-1}n_p+2j &\text{if $j=1,3,5,\dots,n_i-1$,}\\
					\displaystyle	2\sum_{p=0}^{i-1}n_p+2j-1 &\text{if $j=2,4,6,\dots,n_i$,}\\
				\end{cases}
			\end{equation*}
			while for $i=1,2,\dots,k$ and $l=1,2,\dots,c$,
			\begin{equation*}
				f(u_{i,j}^l)=
				\begin{cases}
					\displaystyle (c+d)\sum_{p=0}^{i-1}n_p+(j-1)(c+d)+l+2a &\text{if $j=1,3,\dots,n_i-1$,}\\
					\displaystyle (c+d)\sum_{p=0}^{i-1}n_p+j(c+d)-l+1+2a &\text{if $j=2,4,\dots,n_i$,}
				\end{cases}
			\end{equation*}
			and for $i=1,2,\dots, k$ and $l=1,2,\dots, d$,
			\begin{equation*}
				f(v_{i,j}^l)=
				\begin{cases}
					\displaystyle (c+d)\sum_{p=0}^{i-1}n_p+jc+(j-1)d+l+2a &\text{if $j=1,3,\dots,n_i-1$,}\\
					\displaystyle (c+d)\sum_{p=0}^{i-1}n_p+(j-1)c+jd-l+1+2a&\text{if $j=2,4,\dots,n_i$.}
				\end{cases}
			\end{equation*}
			Note that, for any $p=1,2,\dots,k$, 
			\begin{align*}
				&\sum_{j=1}^{n_p}f(u_{p,j})=\sum_{j=1}^{n_p}f(v_{p,j})=\frac{n_p(2n_p+1)}{2}+2\bigg(\sum_{i=1}^{p-1}n_i\bigg)n_p\quad \text{and}\\
				&\sum_{j=1}^{n_p}f(u_{p,j}^l)=\sum_{j=1}^{n_p}f(v_{p,j}^l)=\frac{n_p((c+d)n_p+1)}{2}+2an_p+(c+d)\bigg(\sum_{i=1}^{p-1}n_i\bigg)n_p,
			\end{align*}
			for any $l=1,2,\dots,c$, in case of $u_{p,j}^l$ and for any $l=1,2,\dots,d$, in case of $v_{p,j}^l$. Hence,
			\begin{equation*}
				\sum_{i=1}^{k}\sum_{j=1}^{n_i}f(v_{i,j})=\sum_{i=1}^{k}\sum_{j=1}^{n_i}f(u_{i,j})=\sum_{i=1}^{k}n_i^2+\frac{a}{2}+2\ \sum_{i,j=1,i\not=j}^{k}n_in_j=s\ \text{(say)},	\end{equation*}
			\begin{align*}
				\sum_{i=1}^{k}\sum_{j=1}^{n_i}f(v_{i,j}^l)=\sum_{i=1}^{k}\sum_{j=1}^{n_i}f(u_{i,j}^l)=&\frac{(c+d)}{2}\bigg(\sum_{i=1}^{k}n_i^2\bigg)+  (c+d)\ \sum_{i=j=1,i\not=j}^{k}n_in_j+\\&+\frac{a}{2}+2a^2=t\ \text{(say)}.
			\end{align*}\\
			For the weight of vertices, we have: for $i=1,2,\dots,k$ and $j=1,2,\dots,n_i$,\\
			\begin{align*}
				w(u_{i,j})&=\sum_{l=1}^c \sum_{p=1}^k\sum_{q=1}^{n_p}f(u_{p,q}^l)+\sum_{p=1}^k\sum_{q=1}^{n_p}f(v_{p,q})+\sum_{p=1, p\not=i}^k \sum_{q=1}^{n_p}f(u_{p,q}), 
                \\ &=ct+2s-\sum_{j=1}^{n_i}f(u_{i,j}),\\
				w(v_{i,j})&=\sum_{l=1}^d \sum_{p=1}^k\sum_{q=1}^{n_p}f(v_{p,q}^l)+\sum_{p=1}^k\sum_{q=1}^{n_p}f(u_{p,q})+\sum_{p=1, p\not=i}^k \sum_{q=1}^{n_p}f(v_{p,q}),  
                \\ &=dt+2s-\sum_{j=1}^{n_i}f(v_{i,j}),\\
				w(u_{i,j}^l)&=\sum_{p=1}^k\sum_{q=1}^{n_p}f(u_{p,q})+\sum_{p=1,p\not=i}^{k}\sum_{q=1}^{n_p}f(u_{p,q}^l),
                \\&=s+t-\sum_{j=1}^{n_i}f(u_{i,j}^l),\ \text{where $l=1,2,\dots,c$},\\
w(v_{i,j}^l)&=\sum_{p=1}^k\sum_{q=1}^{n_p}f(v_{p,q})+\sum_{p=1,p\not=i}^{k}\sum_{q=1}^{n_p}f(v_{p,q}^l),
                \\&=s+t-\sum_{j=1}^{n_i}f(v_{i,j}^l),\ \text{where $l=1,2,\dots,d$}.
			\end{align*}
            As $\sum_{j=1}^{n_i}f(u_{i,j}^l)=\sum_{j=1}^{n_i}f(v_{i,j}^l)$, we have, $w(u_{i,j}^l)=w(v_{i,j}^l)$, for any $l=1,2,\dots,c$, in case of $u_{i,j}^l$ and for any $l=1,2,\dots,d$, in case of $v_{i,j}^l$. Note that, for any $l=1,2,\dots,c$, $w(u_{i,j}^l)\not=w(u_{i,j})$, where $i=1,2,\dots,k $ and $j=1,2,\dots, n_i$. Similarly, for any $l=1,2,\dots,d$, $w(v_{i,j}^l)\not=w(v_{i,j})$, where $i=1,2,\dots,k $ and $j=1,2,\dots, n_i$. Also, as $c\not=d$, $w(u_{i,j})\not=w(v_{i,j})$, where $i=1,2,\dots,k $ and $j=1,2,\dots, n_i$.
            From the above arguments we see that, $f$ is a local distance antimagic labeling of $B_{c,d}[H]$ that assigns $3k$ distinct weights to its vertices.
			\item[Case 2:] If $c=d$.\\ In this case, we define a labeling for $B_{c,c}[H]$ using the labeling $f$ defined earlier. Define $g\colon V(B_{c,c}[H])\rightarrow\{1,2,\dots,(2c+2)a\}$ by $g(u_{i,j})=f(u_{i,j})$, $g(u_{i,j}^l)=f(u_{i,j}^l)$, $g(v_{i,j}^l)=f(v_{i,j}^l)$, for $i=1,2,\dots,k$, $j=1,2,\dots,n_i$ and $l=1,2,\dots,c$.\\ Further define $g(v_{i,1})=f(v_{i+1,1})$ for $i=1,2,\dots,k-1$, $g(v_{k,1})=f(v_{1,1})$ and $g(v_{i,j})=f(v_{i,j})$ for $i=1,2,\dots,k$ and $j=2,3,\dots,n_i$. 
			For the weight of vertices, we have: for $i=1,2,\dots,k$, $j=1,2,\dots,n_i$ and $l=1,2,\dots,c$, we have,
			\begin{align*}
				w(u_{i,j})&=ct+2s-\sum_{j=1}^{n_i}f(u_{i,j}),\\
				w(v_{i,j})&=ct+2s-\sum_{j=1}^{n_i}f(v_{i,j}),\\
				w(u_{i,j}^l)&=w(v_{i,j}^l)=s+t-\sum_{j=1}^{n_i}f(v_{i,j}^l).
			\end{align*}
			As $\sum_{j=1}^{n_i}f(v_{i,j})\not=\sum_{j=1}^{n_i}f(u_{i,j})$, we have $w(u_{i,j})\not=w(v_{i,j})$ and using the arguments as in previous case we can show that $f$ is a local distance antimagic labeling of $B_{c,c}[H]$ that assigns $3k$ distinct weights to its vertices.
		\end{description}
		Since in either case we get $3k$ distinct weights, we have $\chi_{ld}(B_{c,d}[H])\leq 3k$.\\\\
		We shall now prove that the value of $\chi_{ld}(B_{c,d}[H])>2k$. Let $l\colon V(B_{c,d}[H])\rightarrow\{1,2,\dots,(c+d+2)a\}$ be a local distance antimagic labeling of  $B_{c,d}[H]$ where $a=n_1+n_2+\dots+n_k$.\\ So we have, $i=1,2,\dots,k$ and $j=1,2,\dots,n_i$,
		\begin{align*}
			&w(v_{i,j}^l)=\sum_{p=1}^{k}\sum_{j=1}^{n_p}l(v_{p,j})+\sum_{p=1,p\not=i}^{k}\sum_{j=1}^{n_p}l(v_{p,j}^l), \text{where $l=1,2,\dots,d$},\\
			&w(u_{i,j}^l)=\sum_{p=1}^{k}\sum_{j=1}^{n_p}l(u_{p,j})+\sum_{p=1,p\not=i}^{k}\sum_{j=1}^{n_p}l(u_{p,j}^l), \text{where $l=1,2,\dots,c$},\\
			&w(u_{i,j})=\sum_{l=1}^{c}\sum_{p=1}^{k}\sum_{j=1}^{n_p}l(u_{p,j}^l)+\sum_{p=1}^{k}\sum_{j=1}^{n_p}l(v_{p,j})+\sum_{p=1,p\not=i}^{k}\sum_{j=1}^{n_p}l(u_{p,j}),\\
			&w(v_{i,j})=\sum_{l=1}^{d}\sum_{p=1}^{k}\sum_{j=1}^{n_p}l(v_{p,j}^l)+\sum_{p=1}^{k}\sum_{j=1}^{n_p}l(u_{p,j})+\sum_{p=1,p\not=i}^{k}
			\sum_{j=1}^{n_p}l(v_{p,j}).
		\end{align*}
		If $\chi_{ld}(B_{c,d}[H])=2k$, then as $u_1$ and $v$ belong to the same partite set, $w(u_{i,j}^1)=w(v_{r,t})$ for some fixed $i,r\in \{1,2,\dots,k\}$ and for all $j=1,2,\dots,n_i$ and $t=1,2,\dots,n_r$. This leads to,
		\begin{equation}\label{contrathmeq1}
			\sum_{p=1,p\not=i}^{k}\sum_{j=1}^{n_p}l(u_{p,j}^1)=\sum_{p=1,p\not=r}^{k}\sum_{j=1}^{n_p}l(v_{p,j})+\sum_{l=1}^{d}\sum_{p=1}^{k}\sum_{j=1}^{n_p}l(v_{p,j}^l).
		\end{equation}
		Also, as $u$ and $v_1$ belong to the same partite set, $w(v_{i,j}^1)=w(u_{s,q})$ for some fixed $i,s\in \{1,2,\dots,k\}$ and for all $i=1,2,\dots,n_i$ and $q=1,2,\dots,n_s$. This leads to,
		\begin{equation*}
			\sum_{p=1,p\not=i}^{k}\sum_{j=1}^{n_p}l(v_{p,j}^1)=\sum_{p=1,p\not=s}^{k}\sum_{j=1}^{n_p}l(u_{p,j})+\sum_{l=1}^{c}\sum_{p=1}^{k}\sum_{j=1}^{n_p}l(u_{p,j}^l),
		\end{equation*}
		\begin{multline}\label{contrathmeq2}
			\sum_{p=1,p\not=i}^{k}\sum_{j=1}^{n_p}l(v_{p,j}^1)=\sum_{p=1}^{k}\sum_{j=1}^{n_p}l(u_{p,j}^1)+\sum_{l=2}^{c}\sum_{p=1}^{k}\sum_{j=1}^{n_p}l(u_{p,j}^l)+\\+\sum_{p=1,p\not=s}^{k}\sum_{j=1}^{n_p}l(u_{p,j}).
		\end{multline}
		Substituting Equation \ref{contrathmeq1} in Equation \ref{contrathmeq2} we obtain,
		\begin{multline*}
			\sum_{p=1,p\not=i}^{k}\sum_{j=1}^{n_p}l(v_{p,j}^1)=\sum_{j=1}^{n_i}l(u_{i,j}^1)+\sum_{p=1,p\not=r}^{k}\sum_{j=1}^{n_p}l(v_{p,j})+\sum_{l=1}^{d}\sum_{p=1}^{k}\sum_{j=1}^{n_p}l(v_{p,j}^l)+\\+ \sum_{l=2}^{c}\sum_{p=1}^{k}\sum_{j=1}^{n_p}l(u_{p,j}^l)+\sum_{p=1,p\not=s}^{k}\sum_{j=1}^{n_p}l(u_{p,j}).
		\end{multline*}
		That is,
		\begin{multline*}
			0=\sum_{j=1}^{n_i}l(u_{i,j}^1)+\sum_{j=1}^{n_i}l(v_{i,j}^1)+\sum_{p=1,p\not=s}^{k}\sum_{j=1}^{n_p}l(u_{p,j})+\sum_{p=1,p\not=r}^{k}\sum_{j=1}^{n_p}l(v_{p,j})+\\ +\sum_{l=2}^{d}\sum_{p=1}^{k}\sum_{j=1}^{n_p}l(v_{p,j}^l)+\sum_{l=2}^{c}\sum_{p=1}^{k}\sum_{j=1}^{n_p}l(u_{p,j}^l).
		\end{multline*}
		This is a contradiction and hence $\chi_{ld}(B_{c,d}[H])>2k$.
	\end{proof}
	We now present a non-regular bipartite graph for which the result holds.
	\begin{theorem}
		Let $G$ be a non-regular bipartite graph of order $m$ with all vertices in a partite set having the same degree and $H$ be a $2t$-regular graph of order $n$, admitting  neighborhood balanced coloring. Then $\chi_{ld}(G[H])\leq 2\ \chi_{ld}(H)$.
	\end{theorem}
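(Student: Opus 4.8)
The plan is to imitate the construction in the proof of Theorem~\ref{lexibipartitethm}, the only genuinely new feature being that the two partite sets of $G$ now have different sizes and different degrees, so the two blocks of labels must be scaled differently. Write $V(G)=A\cup B$, where every vertex of $A$ has degree $k_1$ and every vertex of $B$ has degree $k_2$; since $G$ is bipartite we have $s_1k_1=s_2k_2$ with $s_1=|A|$ and $s_2=|B|$, and since $G$ is non-regular we may assume without loss of generality that $k_1>k_2$, which forces $s_1<s_2$. Let $f$ be a local distance antimagic labeling of $H$ realising $\chi_{ld}(H)$ and let $h$ be a neighborhood balanced coloring of $H$. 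Writing $x_i^j$ and $y_i^j$ for the vertices of $G[H]$ lying in the $A$- and $B$-copies as in Theorem~\ref{lexibipartitethm}, I would label the copies sitting over $A$ with the first $s_1n$ integers using multiplier $s_1$, and the copies over $B$ with the remaining integers $s_1n+1,\dots,(s_1+s_2)n$ using multiplier $s_2$ together with the shift $s_1n$. Exactly as before, the case split on $h(v_i)=\pm1$ makes each vertex $v_i$ of $H$ contribute a block of consecutive labels, so the resulting map $g$ is a bijection.

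The second step is the weight computation. Since $H$ is regular and admits a neighborhood balanced coloring, exactly $n/2$ of its vertices carry each color, so the column sums $\sum_{i}g(x_i^j)=\tfrac{n(s_1n+1)}{2}$ and $\sum_{i}g(y_i^j)=\tfrac{n(s_2n+1)}{2}+s_1n^2$ are independent of the copy index $j$; denote them $C_A$ and $C_B$. Because $h$ is a neighborhood balanced coloring, each vertex of $H$ has exactly $t$ neighbors of each color, and the same cancellation as in Theorem~\ref{lexibipartitethm} shows the within-copy contribution is independent of $j$, giving
\[
 w(x_i^j)=k_1C_B+s_1\bigl(w_f(v_i)-t\bigr)+t,\qquad
 w(y_i^j)=k_2C_A+s_2\bigl(w_f(v_i)-t\bigr)+t+2s_1nt.
\]
Since distinct copies of $A$ (respectively $B$) are mutually non-adjacent in $G[H]$, adjacencies inside the $A$-side (resp.\ $B$-side) arise only from $H$, so $w_f(v_i)\neq w_f(v_{i'})$ for adjacent pairs and each side uses at most $\chi_{ld}(H)$ colors.

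The main obstacle, exactly as in the regular case, is to rule out a coincidence $w(x_i^j)=w(y_{i'}^{j'})$ across an edge joining the two sides. I would prove the stronger statement that every $A$-weight exceeds every $B$-weight, which reduces to $\min_i w(x_i^j)>\max_{i'}w(y_{i'}^{j'})$, i.e.\ after inserting the extreme values $t(2t+1)\le w_f(\cdot)\le t(2n-2t+1)$, to
\[
 k_1C_B-k_2C_A>2t(n-t)(s_1+s_2).
\]
Writing $E=k_1s_1=k_2s_2$ so that $2E=k_1s_1+k_2s_2$, the left-hand side simplifies to
\[
 k_1C_B-k_2C_A=\tfrac{n^2}{2}\bigl(k_1(s_1+s_2)+k_2(s_2-s_1)\bigr)+\tfrac{n}{2}(k_1-k_2)\ \ge\ \tfrac{n^2}{2}\,k_1(s_1+s_2),
\]
using $s_2\ge s_1$. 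Hence it suffices that $k_1n^2>4t(n-t)$; and since $H$ is $2t$-regular we have $2t\le n-1$, whence $4t(n-t)\le n^2-1<n^2\le k_1n^2$. This disjointness makes $g$ a proper coloring and shows the two sides together use at most $2\,\chi_{ld}(H)$ colors, so $\chi_{ld}(G[H])\le 2\,\chi_{ld}(H)$. The delicate point is precisely the algebraic identity for $k_1C_B-k_2C_A$ together with the choice of which partite set receives the smaller labels; once the side of larger degree is placed first, the inequality closes uniformly in $n$, $t$ and the two degrees.
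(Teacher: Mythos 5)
Your proof is correct, and it departs from the paper's in two substantive ways. The paper labels both sides with a single multiplier $m=r+s$ (each $H$-vertex $v_i$ contributes one block of $m$ consecutive labels spread across all $m$ copies, the $A$-copies taking offsets $1,\dots,r$ and the $B$-copies offsets $r+1,\dots,r+s$), so the column sums of the two sides coincide and the cross-side separation comes entirely from the degree difference $a\neq b$ multiplying that common column sum; the residual case $w(x_p^j)=w(y_q^l)$ with $p\neq q$ is then dispatched by appeal to ``the same arguments as in Theorem~\ref{lexibipartitethm}.'' You instead split the label range into two blocks with side-dependent multipliers $s_1$ and $s_2$, place the smaller block on the side of larger degree, and prove the stronger statement that every $A$-weight strictly exceeds every $B$-weight via the explicit inequality $k_1n^2>4t(n-t)$, which follows from $2t\le n-1$. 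Your bookkeeping checks out: the column sums $C_A=\tfrac{n(s_1n+1)}{2}$ and $C_B=\tfrac{n(s_2n+1)}{2}+s_1n^2$, the weight formulas, the reduction of the separation to $k_1C_B-k_2C_A>2t(n-t)(s_1+s_2)$ using $t(2t+1)\le w_f(\cdot)\le t(2n-2t+1)$, and the identity $k_1C_B-k_2C_A=\tfrac{n^2}{2}\bigl(k_1(s_1+s_2)+k_2(s_2-s_1)\bigr)+\tfrac{n}{2}(k_1-k_2)$ via $k_1s_1=k_2s_2$ are all correct. What your route buys is a self-contained and uniform exclusion of all cross-side collisions (not merely adjacent ones), at the cost of a slightly heavier labeling scheme; the paper's single-multiplier scheme is simpler to state but leaves its cross-side argument partly implicit.
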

	\begin{proof}
		Let $V(G)=A\cup B$, where $A=\{x_1,x_2,\dots, x_r\}$ and $B=\{y_1,y_2,\dots, y_s\}$ are the two partite sets with $r+s=m$ be the vertex set of $G$. Let $deg(x_j)=a$ and $deg(y_l)=b$, where $j=1,2,\dots,r$ and $l=1,2,\dots,s$. Let $V(H)=\{v_1,v_2,\dots,v_n\}$ be the vertex set of $H$. For $i=1,2,\dots,n$, let $x_{i}^j$ and $y_{i}^l$ be the vertices of $G[H]$ that correspond with vertices $x_j$ and $y_l$ of $G$ for $j=1,2,\dots,r$ and $l=1,2,\dots,s$ respectively. Let $f$ be a local distance antimagic labeling for $H$ and $h$ be a neighborhood balanced coloring of $V(H)$. We use these mappings to define a bijection $g\colon V(G[H])\rightarrow\{1,2,\dots,mn\}$ as follows: for $j=1,2,\dots,r$ and $i=1,2,\dots,n$,
		\begin{equation*}
			g(x_{i}^j)=
			\begin{cases}
				m(f(v_i)-1)+j &\text{if $h(v_i)=1$,}\\
				mf(v_i)+1-j &\text{if $h(v_i)=-1$,}
			\end{cases}
		\end{equation*}
		and for $l=1,2,\dots,s$,
		\begin{equation*}
			g(y_{i}^l)=
			\begin{cases}
				m(f(v_i)-1)+l+r &\text{if $h(v_i)=1$,}\\
				mf(v_i)+1-l-r &\text{if $h(v_i)=-1$.}
			\end{cases}
		\end{equation*}
		Note that, for any $j=1,2,\dots,r$, we have, 
		$$\sum_{i=1}^{n}g(x_{i}^j)=\bigg(\frac{n(n+1)}{2}-\frac{n}{2}\bigg)m+\frac{n}{2}=\frac{n(nm+1)}{2},$$
		and similarly, for any $l=1,2,\dots,s,$ $$\sum_{i=1}^{n}g(y_{i}^l)=\frac{n(nm+1)}{2}.$$
		For the weight of vertices, we have: for $i=1,2,\dots,n$,
        \begin{align*}
           w(x_{i}^j)&=\sum_{y_l\in N_G(x_j)}\sum_{k=1}^{n}g(y_k^l)+\sum_{v_q\in N_H(v_i)}g(x_q^j)\\&=\frac{an(nm+1)}{2}+m(w(v_i)-t)+t,\ \text{where $j=1,2,\dots,r$}, \\
           w(y_{i}^l)&=\sum_{x_j\in N_G(y_l)}\sum_{k=1}^{n}g(x_k^j)+\sum_{v_q\in N_H(v_l)}g(y_q^l)\\&=\frac{bn(nm+1)}{2}+m(w(v_i)-t)+t,\ \text{where $l=1,2,\dots,s$}.
        \end{align*}
		 Note that, for any fixed $j$, where $j=1,2,\dots,r$, as $f$ is a local distance antimagic labeling of $H$, the adjacent vertices among $x_i^j$'s, have distinct weights. Similarly, for any fixed $l$, where $l=1,2,\dots,s$, the adjacent vertices among $y_i^l$'s, have distinct weights. Clearly, for any $i=1,2,\dots,n$, $w(x_i^j)\not=w(y_i^l)$ and using the same arguments as in Theorem \ref{lexibipartitethm}, we can show that $w(x_p^j)\not=w(y_q^l)$, for $p\not=q$.
	Thus, $g$ is a local distance antimagic labeling of $G[H]$ that assigns at most $2\ \chi_{ld}(H)$ distinct weights to its vertices.	\end{proof}
	\begin{cor}
		For positive integers $a$, $b$ with $a\not=b$, and a graph admitting  neighborhood balanced coloring $H$, we have $\chi_{ld}(K_{a,b}[H])\leq2\ \chi_{ld}(H)$.
	\end{cor}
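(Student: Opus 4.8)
The plan is to recognize this corollary as a direct specialization of the theorem immediately preceding it, so that the entire argument reduces to checking that $K_{a,b}$ meets the structural hypotheses imposed there on the left factor $G$. That theorem requires $G$ to be a non-regular bipartite graph in which all vertices of a given partite set share a common degree, with the right factor $H$ a $2t$-regular graph admitting a neighborhood balanced coloring; once $K_{a,b}$ is shown to qualify as such a $G$, the bound $\chi_{ld}(K_{a,b}[H])\leq 2\,\chi_{ld}(H)$ is inherited with no further computation.

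First I would fix the canonical bipartition $V(K_{a,b})=A\cup B$ with $|A|=a$ and $|B|=b$. Since every vertex of $A$ is joined to all of $B$ and conversely, each vertex of $A$ has degree $b$ and each vertex of $B$ has degree $a$; hence the ``common degree within each partite set'' condition holds automatically. Next, because $a\not=b$, the two partite sets carry the distinct degrees $b$ and $a$, so $K_{a,b}$ is genuinely non-regular — this is precisely where the hypothesis $a\not=b$ enters, and it enters nowhere else. With these two observations in hand, $K_{a,b}$ satisfies every hypothesis placed on $G$, and I would apply the preceding theorem with $G=K_{a,b}$ and the given $H$ to conclude the stated inequality.

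The step I expect to require the most care is interpretive rather than computational. The preceding theorem demands that $H$ be $2t$-regular, and this regularity is essential: it is what guarantees that each vertex of $H$ has exactly $t=\deg_H(v)/2$ neighbors of each color under the neighborhood balanced coloring, which is what produces the clean weight expression $w(x_i^j)=\tfrac{a\,n(nm+1)}{2}+m(w(v_i)-t)+t$. The corollary, however, is phrased merely for ``a graph admitting neighborhood balanced coloring $H$'', and such a graph need only have all even degrees — it need not be regular. I would therefore either read the statement in the regular case and invoke the theorem verbatim, or, if genuinely non-regular $H$ is intended, re-examine the labeling $g$ and recompute the weights with the constant $t$ replaced by the vertex-dependent $\deg_H(v_i)/2$, then re-verify that the adjacency-discriminating comparisons $w(x_i^j)\not=w(y_q^l)$ still go through. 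Settling which reading is meant, and confirming the weight bound under it, is the only real content of this corollary.
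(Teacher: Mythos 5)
Your proposal is correct and matches the paper's intent exactly: the corollary is a direct specialization of the preceding theorem with $G=K_{a,b}$, which for $a\neq b$ is non-regular bipartite with constant degree on each partite set ($b$ on the $A$-side, $a$ on the $B$-side). Your observation that the corollary's statement silently drops the theorem's requirement that $H$ be $2t$-regular is a fair catch --- the paper's argument does use the constant $t=\deg_H(v)/2$ in the weight computation, so the corollary should indeed be read with $H$ regular (as in the subsequent corollary where $H=K_{2n,\dots,2n}$).
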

	\begin{cor}
		For positive integers $a$, $b$, $n$ with $a\not=b$, and $k$-partite graph $K_{2n,2n,\dots,2n}$, $\chi_{ld}(K_{a,b}[K_{2n,2n,\dots,2n}])=2k$.
	\end{cor}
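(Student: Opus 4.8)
The plan is to prove the two inequalities $\chi_{ld}(K_{a,b}[H])\le 2k$ and $\chi_{ld}(K_{a,b}[H])\ge 2k$ separately, where throughout I write $H=K_{2n,\dots,2n}$ for the balanced complete $k$-partite graph. Both bounds will be assembled from results already available in the paper, so the real task is to check that the hypotheses of those results are satisfied rather than to carry out any new computation.

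For the upper bound I would first record the structural facts about $H$. Since every part of $H$ has even size $2n$, the graph $H$ is regular of degree $2n(k-1)$, hence $2t$-regular with $t=n(k-1)$; and because all part sizes are even, the classification of neighborhood balanced colorable complete multipartite graphs (Freyberg et al.) guarantees that $H$ admits a neighborhood balanced coloring. Moreover $\chi_{ld}(H)=k$ by Theorem~\ref{multipartite}. Since $a\neq b$, the graph $K_{a,b}$ is a non-regular bipartite graph in which every vertex of a given partite set has the same degree (namely $b$ on the $a$-side and $a$ on the $b$-side). Thus $K_{a,b}$ and $H$ together meet exactly the hypotheses of the immediately preceding corollary, which then yields $\chi_{ld}(K_{a,b}[H])\le 2\,\chi_{ld}(H)=2k$.

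For the lower bound I would invoke the elementary inequality $\chi_{ld}(G)\ge\chi(G)$ noted in the introduction, together with the chromatic-number formula for the lexicographic product. By the theorem of Geller et al.~\cite{lexichromatic}, for any bipartite graph $G$ and any graph $H$ one has $\chi(G[H])=2\,\chi(H)$. Applying this with $G=K_{a,b}$ and using $\chi(K_{2n,\dots,2n})=k$ (the chromatic number of a complete $k$-partite graph is $k$) gives $\chi(K_{a,b}[H])=2k$, and therefore $\chi_{ld}(K_{a,b}[H])\ge\chi(K_{a,b}[H])=2k$.

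Combining the two bounds gives $\chi_{ld}(K_{a,b}[H])=2k$. I do not anticipate a genuine obstacle: the only delicate point is verifying that $H$ simultaneously satisfies both the even-regularity and the neighborhood balanced coloring requirements of the upper-bound result, and this is precisely where the evenness of the part sizes $2n$ is used. The lower bound is forced by the general estimate $\chi_{ld}\ge\chi$, and the fact that the chromatic bound $2k$ matches the antimagic upper bound exactly is what makes the corollary an equality rather than merely an inequality.
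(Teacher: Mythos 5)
Your proposal is correct and follows exactly the route the paper intends: the upper bound comes from the immediately preceding theorem/corollary (after checking that $K_{2n,\dots,2n}$ is even-regular and neighborhood balanced colorable and that $K_{a,b}$ with $a\neq b$ satisfies the bipartite degree condition), and the lower bound from $\chi_{ld}\geq\chi$ combined with the Geller--Stahl formula $\chi(G[H])=2\chi(H)$. Nothing is missing.
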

	\begin{theorem}
		Let $G$ be a $k$-regular bipartite graph of order $m$ and $H$ be a $2t$-regular graph admitting  neighborhood balanced coloring of order $n$. Then, $\chi_{ld}((G+K_1)[H])\leq 3\ \chi_{ld}(H)$.
	\end{theorem}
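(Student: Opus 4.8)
The plan is to mirror the two-range construction of Theorem \ref{lexibipartitethm}, promoting it to a three-range construction because the apex vertex of $G+K_1$ plays the role of a third colour class. Since $G$ is $k$-regular bipartite with $k\ge 1$, its parts are equal; write $V(G)=A\cup B$ with $A=\{x_1,\dots,x_s\}$, $B=\{y_1,\dots,y_s\}$ and $m=2s$, and let $z$ be the apex joined to all of $A\cup B$. In $(G+K_1)[H]$ I take $2s+1$ copies of $H$: the vertices $x_i^j,\,y_i^j$ ($j=1,\dots,s$) over $A,B$ and the vertices $z_i$ over $z$, where $i$ indexes $V(H)=\{v_1,\dots,v_n\}$. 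Using a local distance antimagic labeling $f$ of $H$ (with $\chi_{ld}(H)$ weights) and a neighborhood balanced colouring $h$ of $H$, I would define $g$ so that the $A$-copies fill $\{1,\dots,sn\}$, the $B$-copies fill $\{sn+1,\dots,2sn\}$, and the apex copy fills $\{2sn+1,\dots,(2s+1)n\}$, each of the first two ranges being the familiar blow-up $s(f(v_i)-1)+j$ / $sf(v_i)+1-j$ keyed to $h(v_i)=\pm 1$, while the apex range (a single column) is simply $g(z_i)=2sn+f(v_i)$. A routine check shows $g$ is a bijection onto $\{1,\dots,(m+1)n\}$.

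Next I would record the column sums $C_A=\tfrac{n(sn+1)}{2}$, $C_B=C_A+sn^2$, $C_Z=2sn^2+\tfrac{n(n+1)}{2}$, each independent of the column within its type. The point of the neighborhood balanced colouring is that every $v_i$ has exactly $t$ blue and $t$ red neighbours and $H$ has $n/2$ vertices of each colour, so the offset $j$ cancels in every internal-neighbourhood sum. Since in $G+K_1$ each $A$-vertex (resp.\ $B$-vertex) meets $k$ opposite columns and the apex, while the apex meets all $2s$ columns of $A\cup B$, the weights come out depending only on $w(v_i)$ and the type:
\begin{align*}
w(x_i^j)&=s\big(w(v_i)-t\big)+t+kC_B+C_Z,\\
w(y_i^j)&=s\big(w(v_i)-t\big)+(2sn+1)t+kC_A+C_Z,\\
w(z_i)&=w(v_i)+4snt+sC_A+sC_B.
\end{align*}
In particular each of the three families is an injective affine function of $w(v_i)$, so each contributes at most $\chi_{ld}(H)$ distinct weights and the whole labeling uses at most $3\,\chi_{ld}(H)$ colours, which already gives the stated count provided $g$ is proper. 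For adjacency within a single copy, two adjacent vertices correspond to $v_i\sim v_{i'}$ in $H$, whence $w(v_i)\ne w(v_{i'})$ because $f$ is local distance antimagic, and the formulas above then separate them.

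The remaining, and main, work is the cross-type adjacencies. Because every apex column is adjacent to every $A$- and $B$-column and each $A$-column meets some $B$-column, I must show the three weight-value sets are pairwise disjoint. Writing each weight as (slope)$\cdot w(v_i)+$(offset) and letting $\alpha,\beta,\gamma$ denote the offsets of the $A$-, $B$-, $Z$-families, this reduces to showing the offset gaps dominate the spread $W:=w_{\max}-w_{\min}=2t(n-2t)$ of the $H$-weights. The $A$--$B$ comparison is clean: both families share the slope $s$ and $\alpha-\beta=k(C_B-C_A)=ksn^2$, so disjointness needs $ksn^2>sW$, i.e.\ $kn^2-4tn+4t^2>0$; since a $2t$-regular graph on $n$ vertices forces $2t\le n-1$, already $k=1$ gives $(n-2t)^2>0$.

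The hard part will be the two apex comparisons, where the slopes are unequal ($s$ against $1$). Here I would show that $\gamma-\alpha$ and $\gamma-\beta$ lie outside the interval $[\,sw_{\min}-w_{\max},\,sw_{\max}-w_{\min}\,]$, which is the exact condition for $s\,w(v_i)+\alpha\ne w(v_{i'})+\gamma$ to hold for all $i,i'$. I expect this to split according to whether the apex interval sits entirely below or entirely above the $A,B$ intervals: for $s=1$ one finds $\gamma-\alpha=-2n(n-2t)<-W$ (apex below), while for larger $s$ the $2s^2n^2$ contribution to the offsets forces it above. Thus the delicate point is a uniform magnitude estimate, again powered by $n>2t$ and $k\ge 1$; once it is in place, $g$ is a valid local distance antimagic labeling and $\chi_{ld}((G+K_1)[H])\le 3\,\chi_{ld}(H)$ follows.
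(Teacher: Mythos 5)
Your construction is essentially the paper's: three consecutive label ranges, one per column type, with the $A$- and $B$-ranges blown up via the neighborhood balanced coloring and the apex range a single column of $H$; your weight formulas and the within-copy and $A$-versus-$B$ separations are correct (the latter reducing to $kn^2-4tn+4t^2\geq(n-2t)^2>0$). The gap is exactly where you flag it: the apex-versus-$A$ and apex-versus-$B$ comparisons are never carried out. You state the right criterion ($\gamma-\alpha$ must avoid the interval $[\,sw_{\min}-w_{\max},\,sw_{\max}-w_{\min}\,]$), but the "uniform magnitude estimate" you hope for does not exist as a single inequality under your arrangement: with the apex labels placed on top, the sign and size of $\gamma-\alpha$ depend on the interplay of $s$, $k$, $n$, $t$ (for $s=k=1$ one gets $\gamma-\alpha=-2n(n-2t)$, below the interval; for $s$ large relative to $k$ the $2s^2n^2$ term pushes it above; for $k=s$ with $s=2,3$ and $t$ near $n/2$ the two sides nearly cancel and one must argue case by case). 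Since every apex vertex is adjacent to every $x_i^j$ and $y_i^j$, this is not a peripheral check but the main adjacency constraint of the theorem, and leaving it as "once it is in place" leaves the proof incomplete.

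The paper avoids the difficulty by reversing your ordering: it assigns the apex copy the \emph{lowest} labels $\{1,\dots,n\}$ and pushes the $A$- and $B$-ranges up by $n$ and $(s+1)n$. Because the apex column is adjacent to all $2s$ side columns while a side vertex sees only $k\leq s$ opposite columns plus the (small) apex column, the apex weights contain the term $s(C_A+C_B)\approx 2s^2n^2$ against at most $k\,C_B+C_Z\lesssim\frac{3}{2}s^2n^2$ for the side weights, and a short estimate (already tight at $s=1$, where the margin is $(n-2t)^2>0$) shows every apex weight strictly exceeds every side weight. If you flip your apex range to the bottom, your cross-type check collapses to this one domination inequality and the rest of your argument goes through verbatim.
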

	\begin{proof}
		Let $V(G)=A\cup B$, where $A=\{x_1,x_2,\dots, x_s\}$ and $B=\{y_1,y_2,\dots, y_s\}$ are the two partite sets with $2s=m$ be the vertex set of $G$. Also, let $V(H)=\{v_1,v_2,\dots,v_n\}$ and $V(K_1)=\{u\}$ be the vertex sets of $H$ and $K_1$ respectively. Set $M=G+K_1$. For $i=1,2,\dots,n$, let $x_{i}^j$ and $y_{i}^j$ be the vertices of $M[H]$ that correspond with vertices $x_j$ and $y_j$ respectively of $G$ for $j=1,2,\dots,s$. Also, let $u_i$ be the vertices of $M[H]$ that correspond with vertex $u$ of $K_1$. Let $f$ be a local distance antimagic labeling for $H$ that assigns $\chi_{ld}(H)$ distinct weights to vertices of $H$ and $h$ be a neighborhood balanced coloring of $V(H)$. We use these mappings to define a bijection $g\colon V(M[H])\rightarrow\{1,2,\dots,(m+1)n\}$ as follows:\\
		$$g(u_i)=i\quad \text{where $i=1,2,\dots,n$},$$
		and for $j=1,2,\dots,s$, and $i=1,2,\dots,n,$
		\begin{align*}
			&	g(x_{i}^j)=
			\begin{cases}
				s(f(v_i)-1)+j+n &\text{if $h(v_i)=1$, }\\
				sf(v_i)+1-j+n &\text{if $h(v_i)=-1$,}
			\end{cases}\\
			&g(y_{i}^j)=
			\begin{cases}
				s(f(v_i)-1)+j+(s+1)n &\text{if $h(v_i)=1$,}\\
				sf(v_i)+1-j+(s+1)n &\text{if $h(v_i)=-1$.}
			\end{cases}
		\end{align*}
		
		Note that, for any $j=1,2,\dots,s$, we have,
		\begin{align*}
			\sum_{i=1}^{n}g(x_{i}^j)&=\bigg(\sum_{i=1}^{n}f(v_i)-\frac{n}{2}\bigg)s+\frac{n}{2}+n^2=\frac{n(ns+1)}{2}+n^2,\\
			\sum_{i=1}^{n}g(y_{i}^j)&=\frac{n(ns+1)}{2}+(s+1)n^2.
		\end{align*} 
		For the weight of vertices, we have: for $i=1,2,\dots,n$,
		\begin{align*}
			w(u_i)&=\sum_{j=1}^s\sum_{p=1}^n\big[g(x_p^j)+g(y_p^j)\big]+\sum_{v_l\in N_H(v_i)}g(u_l) \\&=s\bigg[\frac{n(ns+1)}{2}+n^2+\frac{n(ns+1)}{2}+(s+1)n^2\bigg]+w_f(v_i)\\
			&=s(2n^2s+2n^2+n)+w_f(v_i),
		\end{align*}
		and for $j=1,2,\dots,s,$
		\begin{align*}
			w(x_{i}^j)&=\sum_{y_q\in N_G(x_j)}\sum_{p=1}^ng(y_p^q)\ +\sum_{v_l\in N_H(v_i)}g(x_l^j)\\&=\bigg[\frac{n(ns+1)}{2}+(s+1)n^2\bigg]k+s(w(v_i)-t)+(2n+1)t+\frac{n(n+1)}{2},\\
			w(y_{i}^j)&=\sum_{x_q\in N_G(y_j)}\sum_{p=1}^ng(x_p^q)\ +\sum_{v_l\in N_H(v_i)}g(y_l^j)\\&=\bigg[\frac{n(ns+1)}{2}+n^2\bigg]k+s(w(v_i)-t)+(2n+2sn+1)t+\frac{n(n+1)}{2}.
		\end{align*} 
        As $f$ is a local distance antimagic labeling of $H$, the adjacent vertices among $u_i$'s have distinct weights. Similarly, for any fixed $j$, where $j=1,2,\dots,s$, the adjacent vertices among $x_i^j$'s and adjacent vertices among $y_i^j$'s have distinct weights.
        Clearly, for any $i=1,2,\dots,n$, $w(x_i^j)\not=w(y_i^l)$ and using the same arguments as in Theorem \ref{lexibipartitethm}, we can show that $w(x_p^j)\not=w(y_q^l)$, for $p\not=q$.
		Also, the weight of vertices in the copy of $H$ due to vertex $u$, exceeds the weight of any of the vertex in the copy of $H$ due to vertex $x_j$ or $y_j$. This shows that $g$ is a local distance antimagic labeling of $M[H]$, that assigns at most $3\ \chi_{ld}(H)$ distinct weights. Hence, $\chi_{ld}((G+K_1)[H])\leq 3\ \chi_{ld}(H)$.
	\end{proof}
	\begin{cor}
		For positive integer $m$ and a $2t$-regular graph $H$ of order $n$ admitting  neighborhood balanced coloring, we have $\chi_{ld}(W_m[H])\leq 3\ \chi_{ld}(H)$. 
	\end{cor}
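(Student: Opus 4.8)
The plan is to recognize the wheel $W_m$ as the join $C_m + K_1$ of a cycle with a single vertex, so that the immediately preceding theorem applies directly. First I would set $G = C_m$, the cycle on $m$ vertices, and observe that $C_m$ is $2$-regular of order $m$; in particular it is $k$-regular with $k=2 \geq 2$. The only remaining hypothesis of the preceding theorem on $G$ is bipartiteness, which holds precisely when $m$ is even, so the argument proceeds under this parity assumption. The graph $H$ is the given $2t$-regular graph admitting a neighborhood balanced coloring, matching the theorem's hypothesis on the second factor verbatim.

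With $G = C_m$ a $2$-regular bipartite graph of order $m$ and $H$ as above, all hypotheses of the theorem are met, so it yields $\chi_{ld}((C_m + K_1)[H]) \le 3\,\chi_{ld}(H)$. Since $W_m \cong C_m + K_1$ by the definition of the wheel, this is exactly the desired bound $\chi_{ld}(W_m[H]) \le 3\,\chi_{ld}(H)$. There is essentially no computation to carry out: the corollary is a specialization of the theorem, and the labeling $g$ constructed there (assigning the hub copy the labels $1,\dots,n$ and splitting the remaining labels between the two partite classes $A$ and $B$ of the cycle according to the neighborhood balanced coloring $h$ of $H$) already produces the claimed at most $3\,\chi_{ld}(H)$ weights.

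The only genuine point to watch is the parity of $m$. The theorem requires a bipartite base graph, and an odd cycle is not bipartite, so the construction underlying the theorem -- which relies on partitioning each copy of $H$ between $A$ and $B$ -- cannot be invoked when $m$ is odd. I expect this to be the main (and essentially the only) obstacle: the statement is to be read with $m$ even implicit, since for odd wheels the join-based theorem supplies no labeling and a separate construction would be required.
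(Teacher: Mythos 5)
Your proposal matches the paper's approach exactly: the corollary is stated without further proof as an immediate specialization of the preceding theorem via $W_m \cong C_m + K_1$, taking $G = C_m$ as a $2$-regular bipartite graph of order $m$. Your caveat about parity is well taken and identifies a real gap in the paper's statement rather than in your argument — $C_m$ is bipartite only for even $m$ (and the theorem's proof moreover writes the partite sets as equal-sized), so the corollary as printed for all positive integers $m$ is only justified by the theorem when $m$ is even.
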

	\begin{cor}
		For positive integer $m$ and a $2t$-regular graph  $H$ of order $n$ admitting  neighborhood balanced coloring, we have $\chi_{ld}(F_m[H])\leq 3\ \chi_{ld}(H)$. 
	\end{cor}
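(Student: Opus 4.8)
The plan is to reuse the three-band construction of Theorem \ref{lexibipartitethm}, now making room for the extra copy of $H$ sitting over the apex vertex. Write $A=\{x_1,\dots,x_s\}$ and $B=\{y_1,\dots,y_s\}$ for the two partite sets of the $k$-regular bipartite graph $G$ (so $m=2s$), let $u$ be the apex of $M=G+K_1$, fix a local distance antimagic labeling $f$ of $H$ realising $\chi_{ld}(H)$ weights, and fix a neighborhood balanced coloring $h$ of $H$ (which exists because $H$ is $2t$-regular). I reserve the lowest block $\{1,\dots,n\}$ of labels for the single apex copy via $g(u_i)=i$, the middle block for the $A$-copies, and the top block for the $B$-copies, applying on $A$ and $B$ the same parity-splitting rule as before, namely $g=s(f(v_i)-1)+(\text{index})+(\text{offset})$ when $h(v_i)=1$ and $g=sf(v_i)+1-(\text{index})+(\text{offset})$ when $h(v_i)=-1$, with offset $n$ on $A$ and $(s+1)n$ on $B$. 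The first routine checks are that $g$ is a bijection onto $\{1,\dots,(m+1)n\}$ and that the column sum $\sum_i g(x_i^j)$ (resp. $\sum_i g(y_i^j)$) does not depend on $j$; both follow from $\sum_i f(v_i)=\tfrac{n(n+1)}{2}$ and the fact that a neighborhood balanced coloring of a regular graph uses exactly $n/2$ vertices of each colour.

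Next I would compute the three weight formulas. Because $u$ is universal in $M$, the apex vertex $u_i$ is adjacent to every $x_p^j$, every $y_p^j$, and those $u_l$ with $v_l\in N_H(v_i)$; summing the $j$-independent column totals gives $w(u_i)=s(2n^2s+2n^2+n)+w_f(v_i)$. A vertex $x_i^j$ is adjacent to the $k$ full $B$-copies indexed by $N_G(x_j)$, to all $n$ apex vertices (contributing $\tfrac{n(n+1)}{2}$), and to its $H$-neighbours inside its own copy; the neighborhood balanced coloring makes this last contribution collapse to $s(w_f(v_i)-t)+(2n+1)t$, so that $w(x_i^j)=\big[\tfrac{n(ns+1)}{2}+(s+1)n^2\big]k+s(w_f(v_i)-t)+(2n+1)t+\tfrac{n(n+1)}{2}$, with the symmetric formula for $w(y_i^j)$. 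The structural point is that each of the three weights is an affine function of $w_f(v_i)$ whose additive constant is independent of the copy index $j$; hence the apex copy, the $A$-copies, and the $B$-copies each realise at most $\chi_{ld}(H)$ colours, for the target total of $3\chi_{ld}(H)$.

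It then remains to verify that $g$ is genuinely local distance antimagic, and this splits into three distinctness checks. Same-copy adjacencies are immediate: there the weight gap is a positive multiple of $w_f(v_i)-w_f(v_{i'})$ for $v_i\sim v_{i'}$ in $H$, which is nonzero since $f$ is local distance antimagic. The apex-versus-$A/B$ adjacencies are handled by a magnitude estimate: since $G$ is $k$-regular bipartite on parts of size $s$ we have $k\le s$, and comparing leading terms shows $w(u_i)$, of order $2s^2n^2$, strictly exceeds every $x$- and $y$-weight, whose dominant term is only $\tfrac{3}{2}sn^2k\le\tfrac{3}{2}s^2n^2$. I expect the $A$-versus-$B$ adjacencies to be the main obstacle; as in Theorem \ref{lexibipartitethm}, a putative equality $w(x_i^j)=w(y_{i'}^{j'})$ reduces after cancellation to forcing $w_f(v_{i'})-w_f(v_i)=n(nk-2t)$, whereas two $H$-weights (each a sum of $2t$ distinct labels from $\{1,\dots,n\}$) can differ by at most $2t(n-2t)$, and one checks $n(nk-2t)>2t(n-2t)$, so no such collision occurs. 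Combining the colour count with these three checks yields $\chi_{ld}((G+K_1)[H])\le3\chi_{ld}(H)$; the corollaries for $W_m[H]$ and $F_m[H]$ then follow by taking $G$ to be an even cycle or a perfect matching.
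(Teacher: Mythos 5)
Your proposal is correct and follows essentially the same route as the paper: the paper derives this corollary directly from its theorem on $\chi_{ld}((G+K_1)[H])\leq 3\,\chi_{ld}(H)$ for $k$-regular bipartite $G$, whose proof uses exactly the labeling you describe (apex copy labeled $1,\dots,n$, the two partite bands offset by $n$ and $(s+1)n$, the same weight formulas, and the same three distinctness checks), and the corollary is then the specialization $F_m=mK_2+K_1$ with $G=mK_2$ a $1$-regular bipartite graph. Your magnitude estimate for the apex-versus-band comparison is in fact more explicit than the paper's, which simply asserts that the apex-copy weights exceed the others.
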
\bigskip

\end{document}